\documentclass[12pt,authoryear]{article}
\usepackage[utf8]{inputenc}

\usepackage{graphicx}
\usepackage{tikz}
\usetikzlibrary{decorations.pathreplacing}
\usetikzlibrary{shapes.geometric}
\usepackage{enumitem}
\usepackage{lscape}
\usepackage[round, sort&compress]{natbib}
\usepackage[margin=3cm]{geometry}

\usepackage{xcolor}
\usepackage{xspace}

\usepackage{amsmath}
\usepackage{amssymb}
\usepackage{amsthm}
\usepackage[mathscr]{euscript}
\usepackage{bbm}
\usepackage{dsfont}

\newtheorem{lemma}{Lemma}
\newtheorem{theorem}{Theorem}
\newtheorem{prop}{Proposition}

\theoremstyle{definition}
\newtheorem{rmk}{Remark}

\newcommand{\Prob}{\mathbb{P}}
\newcommand{\E}{\mathbb{E}}

\newcommand{\midd}{\,\middle|\,} 
\newcommand{\I}[1]{\mathbbm{1}_{\{#1\}}}
\newcommand{\1}[1]{\mathbbm{1}_{#1}}
\newcommand{\Exp}{\operatorname{Exp}}
\newcommand{\ON}{1_N}

\usepackage{hyperref}



\begin{document}


\title{Weak Convergence of Non-neutral Genealogies\\ to Kingman's Coalescent}

\author{Suzie Brown$^{1}$\\s.brown.18@warwick.ac.uk \and Paul A. Jenkins$^{1,2,3}$\\p.jenkins@warwick.ac.uk \and Adam M. Johansen$^{1,3}$\\a.m.johansen@warwick.ac.uk \and Jere Koskela$^{1}$\\j.koskela@warwick.ac.uk} 
{\date{\small $^1$ Department of Statistics, University of Warwick, Coventry, U.K.\\[2pt]
$^2$ Department of Computer Science, University of Warwick, Coventry, U.K.\\[2pt]
$^3$ Alan Turing Institute, London, U.K.}}
      
\maketitle

\begin{abstract}

Interacting particle populations undergoing repeated mutation and fitness-based selection steps model genetic evolution, and describe a broad class of sequential Monte Carlo methods. 
The genealogical tree embedded into the system is important in both applications. 
Under neutrality, when fitnesses of particles and their parents are independent, rescaled genealogies are known to converge to Kingman's coalescent. 
Recent work established convergence under non-neutrality, but only for finite-dimensional distributions. 
We prove weak converge of non-neutral genealogies on the space of c\`adl\`ag paths under standard assumptions, enabling analysis of the whole genealogical tree. 
The proof relies on a conditional coupling in a random environment.

\end{abstract}






\section{Introduction}\label{sec:intro}

The \emph{$n$-coalescent} \citep{kingman1982coal, kingman1982exch,kingman1982gene} is a homogeneous continuous-time Markov process on the space $\mathcal{P}_n$ of partitions of $\{1,\dots,n\} =: [ n ]$.
The non-zero entries of its infinitesimal generator $Q$ are
\begin{equation*}
q_{\xi,\eta} = \begin{cases}
-|\xi|(|\xi|-1)/2 & \xi=\eta \\
1 & \xi \prec \eta
\end{cases}
\end{equation*}
for every $\xi, \eta \in \mathcal{P}_n$, where $|\xi|$ denotes the number of blocks in $\xi$, and $\xi \prec \eta$ means that $\eta$ is obtained from $\xi$ by merging exactly two blocks.
It is the limiting genealogical process, as the population size goes to infinity, for samples of $n$ individuals from a wide range of population models.
The original work of \citet{kingman1982gene} provides sufficient conditions for convergence of genealogies from \emph{Cannings models} to the $n$-coalescent, in the sense of finite-dimensional distributions.
Cannings models are characterised by a fixed population size, exchangeable offspring counts, and i.i.d.\ generations.

\citet{mohle1998} provides sufficient conditions for the wider class of models in which the population size may vary deterministically, the offspring distributions are independent (but not i.i.d.) across generations, and exchangeability is replaced by the weaker \emph{random assignment condition}.
Independence of family sizes in different generations is incompatible with hereditary fitness, and essentially implies \emph{neutral} reproduction \citep{delmoral2009}.
For that class of models and under the same conditions, \citet{mohle1999} proves weak convergence of the genealogies as stochastic processes.
\citet{mohle2000} gives a simpler condition which is necessary and sufficient for weak convergence of genealogies from Cannings models to the Kingman coalescent.

We consider a still wider class of models where exchangeability is relaxed to random assignment, and independence between generations is not required, so that our results apply to non-neutral models.
This class was also treated in \citet{brown2021}, where convergence of finite-dimensional distributions was proved under a non-neutral analogue of the condition of \citet{mohle2000}.
Here we prove weak convergence under the same condition.
Our proof follows the structure of \citet[Theorem 3.1]{mohle1999}, but removing the assumption of independent family sizes between generations results in considerable technical complications because the pre-limiting, reverse-time genealogies are no longer Markov processes.
Non-Markovianity rules out some standard weak convergence conditions, such as uniform convergence of semigroups \cite[Chapter 4, Theorem 2.5]{ethier1986}.
We overcome these complications and prove weak convergence of the non-Markovian genealogical processes to the Markovian $n$-coalescent limit by controlling the modulus of continuity of pre-limiting processes.
Our approach yields weak convergence without assuming Markovianity \cite[Chapter 3, Corollary 7.4 and Theorem 7.8]{ethier1986}.

The models studied are of interest not only in population genetics, but also in sequential Monte Carlo (SMC): a very broad class of algorithms used in computational statistics and related disciplines \citep[see e.g.][for an introduction]{chopin2020}.  
In this context, the model we study characterises the dynamics of a population of ``particles'' whose empirical measure approximates a sequence of measures of interest, such as the conditional distribution of the latent part of a hidden Markov model given some observations.
Genealogies induced by the resampling step of SMC are critical to the performance of these algorithms, as has been known since SMC was first introduced to the statistics literature \citep{gordon1993}.
Genealogical trees embedded into SMC particle systems have been the subject of numerous studies \citep{delmoral2001b, delmoral2009, delmoral2016}, see also \citet[Chapter 3]{delmoral2004}, but direct analysis of the marginal genealogies has only taken off recently \citep{brown2021, koskela2020annals}. 
Indeed, \citet[Section 4]{brown2021} verify that the conditions of our main result, Theorem~\ref{thm:weakconv} presented below, hold for several important classes of SMC algorithms.
Hence, Theorem~\ref{thm:weakconv} allows us to strengthen several corollaries of \citet{brown2021} to weak convergence of the genealogical processes, facilitating convergence statements for a larger class of test functions than those which depend only on the finite-dimensional distributions.
Prominent examples of functions which cannot be computed from finite-dimensional distributions alone include the time to the most recent common ancestor (TMRCA) and the total branch length of the genealogy, both of which measure the memory cost of storing algorithm output.
\citet{jacob2015} showed that the TMRCA and total branch length of all $N$ particles are both $O( N \log N )$, while \cite{koskela2020annals} showed that those for a sample of $n$ particles are $\Theta( N )$ and $\Theta( N \log n )$, respectively, strongly suggesting that the results of \citet{jacob2015} could be sharpened to $\Theta(N)$ and $\Theta(N \log N)$, respectively.
Because of the lack of a weak convergence result, the argument of \citet[Corollary 2]{koskela2020annals} relies on a cumbersome workaround involving a coupling of the genealogical process with a separate $n$-coalescent.
Our Theorem \ref{thm:weakconv} removes the need for such workarounds, and facilitates simpler a priori analysis of marginal SMC genealogies in other settings, such as design of variance estimation schemes \citep{olsson2019} and conditional SMC updates in particle MCMC \citep{andrieu2010}.
In both settings, designing a large enough particle system to guarantee that a large number of distinct ancestral lines remain after a given number of generations is crucial to practical performance.

\section{Encoding genealogies}

Consider an interacting particle system in which $N$ particles stochastically reproduce in discrete, non-overlapping generations, such that each particle in generation $t \in \mathbb{N}$ has a single parent in the previous generation.
For convenience, we label time in reverse throughout this article, with the terminal generation labelled $0$, their parents being in generation $1$, and so on.
The index of the generation-$t$ parent of individual $j$ in generation $t - 1$ is denoted $a_t^{(j)} \in [N]$, and the number of offspring that individual $i$ in generation $t$ has in generation $t-1$ is $\nu_t^{(i)} := |\{ j: a_t^{(j)} = i \}|$.
Let $(\mathcal{F}_t)_{t\in\mathbb{N}}$, where $\mathcal{F}_t = \sigma(\{\nu_{s}^{(1:N)}\}_{s=0}^t)$, be the reverse-time filtration generated by the vectors of offspring counts $\nu_t^{(1:N)} := ( \nu_t^{(1)}, \ldots, \nu_t^{(N)} )$.

We study the genealogies of finitely many individuals under the asymptotic regime in which $N\to\infty$. In particular, sample $n\leq N$ individuals from generation $0$ uniformly without replacement, and trace back the corresponding lineages to obtain their genealogy which, following \citet{kingman1982coal}, is encoded by a $\mathcal{P}_n$-valued stochastic process $(G_t^{(n,N)})_{t\in\mathbb{N}}$ with initial value $G_0^{(n,N)} = \{ \{1\}, \dots, \{n\} \} =: \Delta $.
At each $t\in\mathbb{N}$, two indices $i\neq j \in [n]$ are in the same block of $G_t^{(n,N)}$ if and only if terminal particles $i$ and $j$ share a common ancestor at time $t$ (i.e.\ $t$ generations back).

Under the assumption \ref{standing_assumption} stated below, it is sufficient for our purposes to consider only offspring counts $\nu_t^{(1:N)}$ rather than the parental indices $a_t^{(1:N)}$, the latter being generally more informative.
\begin{enumerate}[label=(A\arabic*)]
\item\label{standing_assumption} The conditional distribution of parental indices $a_t^{(1:N)}$ given offspring counts $\nu_t^{(1:N)}$ is uniform over all assignments such that $ |\{ j: a_t^{(j)} =i \}|= \nu_t^{(i)} $ for all $i$.
\end{enumerate}
Known as the \emph{random assignment condition},
\ref{standing_assumption} is weaker than exchangeability of the particles within a generation.
As we will see, it is still sufficient to yield an exchangeable coalescent process in the $N \to \infty$ limit.

In order to obtain a well-defined limit for the genealogical process as $N\to\infty$, we must scale time by a suitable function $\tau_N(\cdot)$.
To define this time scale, we first define the conditional pair merger probability,
\begin{equation}\label{eq:defn_cN}
c_N(t) := \frac{1}{(N)_2} \sum_{i=1}^N (\nu_t^{(i)})_2,
\end{equation}
where $(n)_k := n (n - 1) \cdots (n - k + 1)$ is the falling factorial.
This is the probability, conditional on $\nu_t^{(1:N)}$, that a randomly chosen pair of lineages in generation $t$ merges exactly one generation earlier.
The interpretation of $c_N(t)$ as a conditional merger probability is justified by assumption \ref{standing_assumption}, and the same is true of the interpretation of $D_N(t)$ in \eqref{eq:defn_DN} as an upper bound on the probability of larger mergers.
To achieve a limiting pair merger rate of 1, as in the $n$-coalescent, we rescale time by the left-inverse:
\begin{equation}\label{eq:defn_tauN}
\tau_N(t) := \inf \left\{ s \in \mathbb{N} : \sum_{r=1}^s c_N(r) \geq t \right\} .
\end{equation}
The function $\tau_N$ maps continuous to discrete time, providing the link between the discrete-time generations and the continuous-time scaling limit.

Our definition of $c_N(t)$ differs from that which is usual in the population genetics literature, where the deterministic function defined as the expectation of \eqref{eq:defn_cN} is identified as $c_N(t)$.
Our definition yields a time scale which is random, depending on the realisation of $\nu_t^{(1:N)}$ for each $t$.
In the context of SMC, this is necessary to accommodate the heterogeneity of the system: the random time scale subsumes the time-inhomogeneity of the model, so that the rescaled ancestry converges to a time homogeneous process.
Inhomogeneous time scales have been studied in the genetics context as models of variable population size, but even in these settings $c_N(t)$ is defined as an expectation so that the inhomogeneous time scale varies deterministically \citep{mohle2002coal}.

We will also make extensive use of the following upper bound \citep{koskela2020annals} on the conditional probability of a multiple merger (three or more lineages merging, or two or more simultaneous mergers):
\begin{equation}\label{eq:defn_DN}
D_N(t) := \frac{1}{N(N)_2} \sum_{i=1}^N (\nu_t^{(i)})_2
        \left\{ \nu_t^{(i)} + \frac{1}{N} \sum_{j\neq i} (\nu_t^{(j)})^2 \right\} .
\end{equation} 
This is used to control the rate of multiple mergers, which must be dominated by the pair-merger rate as $N\to\infty$ if we are to recover an $n$-coalescent in the limit.
Proposition~\ref{thm:cN_properties} collects some basic properties of $c_N$, $D_N$ and $\tau_N$.
\begin{prop}\label{thm:cN_properties}
For all $t\in\mathbb{N}$, $t^\prime > s^\prime \geq 0$,
\begin{enumerate}[label=$(\alph*)$]
\item \label{item:cN_property1} \hspace{5pt}
    $\begin{aligned}
    0 \leq D_N(t) \leq c_N(t) \leq 1,
    \end{aligned}$
\item \label{item:cN_property4} \hspace{5pt}
    $\begin{aligned}
    t^\prime - (s^\prime + 1) \mathds{1}_{ ( 0, \infty ) }( s^\prime )
    \leq \sum_{r=\tau_N(s^\prime) + 1}^{\tau_N(t^\prime)} c_N(r)
    \leq t^\prime +1,
    \end{aligned}$
\item \label{item:cN_property6} \hspace{5pt}
    $\begin{aligned}
    \tau_N(t^\prime) \geq t^\prime.
    \end{aligned}$
\end{enumerate}
\end{prop}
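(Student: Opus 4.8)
The plan is to prove the three parts essentially independently: part (a) is a self-contained combinatorial fact about a single realisation of $\nu_t^{(1:N)}$, while parts (b) and (c) follow from (a) --- specifically from the single bound $0\le c_N(r)\le 1$ --- by unwinding the definition of $\tau_N$ as a generalised inverse. Throughout I would use the identity $\sum_{i=1}^{N}\nu_t^{(i)}=N$, which holds because each generation-$(t-1)$ particle has exactly one parent.

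For part (a): non-negativity of $D_N(t)$ and $c_N(t)$ is immediate since $(\nu)_2=\nu(\nu-1)\ge 0$ and $\nu^2\ge 0$ for $\nu\in\mathbb{N}$. For $c_N(t)\le 1$ I would observe that $\sum_{i}(\nu_t^{(i)})_2$ counts the ordered pairs of distinct generation-$(t-1)$ particles sharing a parent, hence is at most $(N)_2$; equivalently, $\sum_i\nu_t^{(i)}(\nu_t^{(i)}-1)=\sum_i(\nu_t^{(i)})^2-N\le(\sum_i\nu_t^{(i)})^2-N=N^2-N=(N)_2$. The one inequality that needs a short argument is $D_N(t)\le c_N(t)$: comparing the two sums term by term, it suffices to show $\nu_t^{(i)}+\tfrac1N\sum_{j\neq i}(\nu_t^{(j)})^2\le N$ for each $i$, since then the $i$-th summand of the $D_N$ sum is at most $N$ times the $i$-th summand of the $c_N$ sum. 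I would bound $\sum_{j\neq i}(\nu_t^{(j)})^2\le(\sum_{j\neq i}\nu_t^{(j)})^2=(N-\nu_t^{(i)})^2$ and, writing $x:=\nu_t^{(i)}\in[0,N]$, reduce the claim to $x+(N-x)^2/N\le N$, i.e.\ $x(x-N)\le 0$, which holds on $[0,N]$.

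For part (c): since $c_N(r)\le 1$, the partial sums satisfy $\sum_{r=1}^{s}c_N(r)\le s$, so $\sum_{r=1}^{s}c_N(r)\ge t'$ can hold only if $s\ge t'$; taking the infimum gives $\tau_N(t')\ge\lceil t'\rceil\ge t'$. For part (b): write $S(s):=\sum_{r=1}^{s}c_N(r)$, so that $S$ is non-decreasing with increments in $[0,1]$, $\tau_N$ is non-decreasing, and (since $t'>s'$) the sum in (b) equals $S(\tau_N(t'))-S(\tau_N(s'))$. The key step is the two-sided estimate $t'\le S(\tau_N(t'))<t'+1$ for $t'>0$: the lower bound is just the definition of $\tau_N(t')$, while $S(\tau_N(t'))=S(\tau_N(t')-1)+c_N(\tau_N(t'))<t'+1$ by minimality of $\tau_N(t')$ together with $c_N\le 1$. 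The upper bound in (b) then follows from $S(\tau_N(t'))-S(\tau_N(s'))<t'+1-0=t'+1$. For the lower bound, when $s'>0$ the same estimate gives $S(\tau_N(s'))<s'+1$, so the difference exceeds $t'-(s'+1)$; when $s'=0$ we have $\tau_N(0)=0$ and $S(0)=0$, so the difference equals $S(\tau_N(t'))\ge t'$ --- precisely the two regimes encoded by $\mathds{1}_{(0,\infty)}(s')$.

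I expect the only genuine obstacle to be the term-by-term inequality behind $D_N(t)\le c_N(t)$, since it is the sole place where the constraint $\sum_i\nu_t^{(i)}=N$ is essential and a small optimisation is needed; everything else is bookkeeping with the definition of the inverse time change. (On the null event where $\sum_r c_N(r)$ stays below $t'$ so that $\tau_N(t')=\infty$, the bounds are read under the usual conventions.)
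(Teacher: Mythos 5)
Your proposal is correct and follows essentially the same route as the paper: part (a) from non-negativity and $\sum_i\nu_t^{(i)}=N$, part (c) from $c_N\le 1$ and the definition of $\tau_N$, and part (b) by establishing the $s'=0$ estimate $t'\le\sum_{r=1}^{\tau_N(t')}c_N(r)\le t'+1$ and subtracting. The only difference is that you supply a self-contained term-by-term proof of the central inequality $D_N(t)\le c_N(t)$ (via $\nu_t^{(i)}+\tfrac1N\sum_{j\ne i}(\nu_t^{(j)})^2\le N$), where the paper simply cites the erratum supplement of Koskela et al.; your argument is valid.
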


\begin{proof}
The outermost bounds in \ref{item:cN_property1} follow from the fact that the entries of $\nu_t^{(1:N)}$ are non-negative and sum to $N$.
The central inequality follows as outlined in \citet[Supplement, p.\ 13--14]{koskela2022erratum}.
The case $s^{\prime} = 0$ in \textbf{\ref{item:cN_property4}} follows directly from the definition of $\tau_N$ in \eqref{eq:defn_tauN} and part \ref{item:cN_property1}, while the case $s^{\prime} > 0$ is obtained by applying the $s^{\prime} = 0$ case to both sums in
\begin{equation*}
\sum_{r=\tau_N(s^\prime)+1}^{\tau_N(t^\prime)} c_N(r)
= \sum_{r=1}^{\tau_N(t^\prime)} c_N(r) 
        - \sum_{r=1}^{\tau_N(s^\prime)} c_N(r) .
\end{equation*}
Finally, \ref{item:cN_property6} follows from \ref{item:cN_property1} and the definition of $\tau_N$ in \eqref{eq:defn_tauN}.
\end{proof}

We recall the following lemma, proved in \citet[Lemma 3.2]{brown2021thesis}.
\begin{lemma}\label{thm:kjjslemma2}
Fix $t>0$, and recall that $(\mathcal{F}_r)$ is the backwards-in-time filtration generated by the offspring counts $\nu_r^{(1:N)}$ at each generation $r$.
Let 
\begin{equation*}
    f_N : \Bigg\{ \nu^{(1:N)} : \nu^{(i)} \geq 0 \text{ for each } i \in [N], \text{ and } \sum_{ i = 1 }^N \nu^{(i)} = N \Bigg\} \mapsto \mathbb{R}.
\end{equation*}
Then 
\begin{equation*}
\E \Bigg[ \sum_{r=1}^{\tau_N(t)} f_N(\nu_r^{(1:N)}) \Bigg] 
= \E \Bigg[ \sum_{r=1}^{\tau_N(t)} \E [ f_N(\nu_r^{(1:N)}) \mid \mathcal{F}_{r-1} ] \Bigg] .
\end{equation*}
\end{lemma}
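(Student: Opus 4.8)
The statement is a Wald-type identity, and the plan is to reduce it to the tower property once the one nontrivial ingredient has been isolated: a predictability property of the rescaling stopping time. I would begin by rewriting both sides as series over a fixed index set,
\[
\sum_{r=1}^{\tau_N(t)} f_N(\nu_r^{(1:N)}) = \sum_{r=1}^{\infty} \I{r \leq \tau_N(t)}\, f_N(\nu_r^{(1:N)}),
\]
and likewise with $f_N(\nu_r^{(1:N)})$ replaced by $\E[f_N(\nu_r^{(1:N)}) \mid \mathcal{F}_{r-1}]$. The key step is to show that $\{ r \leq \tau_N(t) \} \in \mathcal{F}_{r-1}$ for every $r \geq 1$. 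Since $c_N(j) \geq 0$ by Proposition~\ref{thm:cN_properties}\ref{item:cN_property1}, the partial sums $s \mapsto \sum_{j=1}^{s} c_N(j)$ are non-decreasing, so from the definition \eqref{eq:defn_tauN} one gets $\{ \tau_N(t) \leq r-1 \} = \{ \sum_{j=1}^{r-1} c_N(j) \geq t \}$; as $\sum_{j=1}^{r-1} c_N(j)$ is a deterministic function of $\nu_1^{(1:N)}, \dots, \nu_{r-1}^{(1:N)}$, this event lies in $\mathcal{F}_{r-1}$, and taking complements gives $\{ r \leq \tau_N(t) \} \in \mathcal{F}_{r-1}$ (for $r = 1$ the event is all of $\Omega$ since $t > 0$). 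Note that $\tau_N(t)$ is an $(\mathcal{F}_s)$-stopping time but is not predictable in general; it is the monotonicity of the partial sums that collapses $\{\tau_N(t) \leq r-1\}$ to an event measurable with respect to generations $1,\dots,r-1$ only. This is the main obstacle, in the sense that it is the only place where the particular structure of $\tau_N$ is used.

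Granting this, for fixed $r$ the $\mathcal{F}_{r-1}$-measurable indicator $\I{r \leq \tau_N(t)}$ may be pulled inside a conditional expectation, so that
\[
\E\big[ \I{r \leq \tau_N(t)}\, f_N(\nu_r^{(1:N)}) \big]
= \E\big[ \I{r \leq \tau_N(t)}\, \E[ f_N(\nu_r^{(1:N)}) \mid \mathcal{F}_{r-1} ] \big] .
\]
Summing over $r$ and exchanging the sum with the expectation then yields the identity. Because $\nu_r^{(1:N)}$ takes values in the finite set of nonnegative integer vectors summing to $N$, the function $f_N$ is bounded, say $|f_N| \leq M$; hence when $f_N \geq 0$ the exchange is immediate by Tonelli's theorem, and for general $f_N$ it is justified provided $\E[\tau_N(t)] < \infty$, since then $\sum_{r} \I{r \leq \tau_N(t)}\, |f_N(\nu_r^{(1:N)})| \leq M \tau_N(t)$ is integrable.

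To sidestep any integrability hypothesis altogether, I would alternatively carry out the computation with the bounded stopping times $\tau_N(t) \wedge m$: for each fixed $m \in \mathbb{N}$,
\begin{align*}
\E\Bigg[ \sum_{r=1}^{\tau_N(t) \wedge m} f_N(\nu_r^{(1:N)}) \Bigg]
&= \sum_{r=1}^{m} \E\big[ \I{r \leq \tau_N(t)}\, f_N(\nu_r^{(1:N)}) \big]
= \sum_{r=1}^{m} \E\big[ \I{r \leq \tau_N(t)}\, \E[ f_N(\nu_r^{(1:N)}) \mid \mathcal{F}_{r-1} ] \big] \\
&= \E\Bigg[ \sum_{r=1}^{\tau_N(t) \wedge m} \E[ f_N(\nu_r^{(1:N)}) \mid \mathcal{F}_{r-1} ] \Bigg],
\end{align*}
which uses only linearity of expectation over finitely many terms together with the tower property. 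Letting $m \to \infty$, the truncated sums increase to the full sums on $\{ \tau_N(t) < \infty \}$ (an event of full probability under the assumptions in force), and one passes to the limit by monotone convergence when $f_N \geq 0$, hence in general after the decomposition $f_N = f_N^+ - f_N^-$. Equivalently, one may observe that $s \mapsto \sum_{r=1}^{s} \big( f_N(\nu_r^{(1:N)}) - \E[f_N(\nu_r^{(1:N)}) \mid \mathcal{F}_{r-1}] \big)$ is an $(\mathcal{F}_s)$-martingale and read the identity as its optional stopping at $\tau_N(t) \wedge m$ in the limit $m \to \infty$. Everything past the measurability claim is routine bookkeeping.
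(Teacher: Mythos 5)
Your proof is correct and is essentially the standard argument (the paper itself defers the proof to \citet[Lemma 3.2]{brown2021thesis}): the monotonicity of $s\mapsto\sum_{j=1}^{s}c_N(j)$ gives $\{r\leq\tau_N(t)\}\in\mathcal{F}_{r-1}$, after which the tower property and Tonelli/truncation yield the identity. Your additional care about integrability via the truncated stopping times $\tau_N(t)\wedge m$ is a sensible refinement rather than a departure.
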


Let $p_{\xi\eta}(t)$ denote the conditional transition probabilities of the genealogical process from $\xi\in\mathcal{P}_n$ to $\eta\in\mathcal{P}_n$ given $\nu_t^{(1:N)}$, for each $t\in\mathbb{N}$.
The only non-zero transition probabilities $p_{\xi\eta}(t)$ are those where $\eta$ can be obtained from $\xi$ by merging some blocks of $\xi$ (i.e.\ some lineages coalescing).
Ordering the blocks by their least element, denote by $b_i$ the number of blocks of $\xi$ that merge to form block $i$ in $\eta$, for each $i \in [|\eta|]$. Hence $b_1 + \cdots + b_{|\eta|} = |\xi|$.
Then, assuming \ref{standing_assumption}, the conditional transition probability is given by
\begin{equation}\label{eq:defn_pxieta}
p_{\xi\eta}(t) 
:= \frac{1}{(N)_{|\xi|}} \sum_{\substack{i_1 , \ldots , i_{|\eta|} =1 
        \\ \text{all distinct} }}^N
        (\nu_t^{(i_1)})_{b_1} \cdots (\nu_t^{(i_{|\eta|})})_{b_{|\eta|}} .
\end{equation}
We will only need to work directly with the identity transition probabilities $p_{\xi\xi}(t)$.
Following \citet[Lemma 3.6]{brown2021}, but keeping the terms in $N$ explicit, yields the following lower bound.
\begin{prop}
\label{thm:pDelta_LB}
Let $\xi \in \mathcal{P}_n$, $N>2$. Then
\begin{equation*}
p_{\xi\xi}(t)
\geq 1 - \binom{|\xi|}{2} \frac{N^{|\xi|-2}}{(N-2)_{|\xi|-2}} \left[ c_N(t) + B_{|\xi|} D_N(t) \right],
\end{equation*}
where 
\begin{equation*}
B_{|\xi|} = K (|\xi|-1)! (|\xi|-2) \exp\big( 2 \sqrt{2(|\xi|-2)} \big)
\end{equation*}
for some $K>0$ that does not depend on $|\xi|$ or $N$.
\end{prop}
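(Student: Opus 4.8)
\noindent The plan is to expand $1-p_{\xi\xi}(t)$ into one-step merger probabilities and bound the leading and subleading parts separately. Write $m=|\xi|$; the cases $m\le2$ are immediate ($p_{\xi\xi}(t)=1$ for $m=1$, and $p_{\xi\xi}(t)=1-c_N(t)$ for $m=2$, with $B_2=0$), so assume $m\ge3$. Since $\{p_{\xi\eta}(t):\eta\in\mathcal P_n\}$ is a probability distribution and $p_{\xi\eta}(t)=0$ unless $\eta$ is obtained from $\xi$ by merging blocks,
\[
1-p_{\xi\xi}(t)=\sum_{\eta\ne\xi}p_{\xi\eta}(t)=\sum_{\xi\prec\eta}p_{\xi\eta}(t)+\sum_{\eta\text{ complex}}p_{\xi\eta}(t),
\]
where ``complex'' means a nontrivial merger of the blocks of $\xi$ that is not a single pair merger; there are $\binom{m}{2}$ terms in the first sum and $\mathrm{Bell}(m)-1-\binom{m}{2}$ in the second, with $\mathrm{Bell}(m)$ the $m$-th Bell number.

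For a single pair merger, \eqref{eq:defn_pxieta} has $b_r=2$ for the merged pair and $b_s=1$ otherwise, so $p_{\xi\eta}(t)=\frac{1}{(N)_m}\sum_{i_1,\dots,i_{m-1}\text{ distinct}}(\nu_t^{(i_r)})_2\prod_{s\ne r}\nu_t^{(i_s)}$; discarding the distinctness constraint on the $m-2$ singleton indices and using $\sum_j\nu_t^{(j)}=N$, $\sum_i(\nu_t^{(i)})_2=(N)_2\,c_N(t)$ and $(N)_2/(N)_m=1/(N-2)_{m-2}$ yields $p_{\xi\eta}(t)\le\frac{N^{m-2}}{(N-2)_{m-2}}c_N(t)$, so the single-pair contribution is at most $\binom{m}{2}\frac{N^{m-2}}{(N-2)_{m-2}}c_N(t)$, exactly the $c_N$-part of the claim. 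For a complex $\eta$ whose non-singleton blocks have sizes $\beta_1\ge\dots\ge\beta_p\ge2$ (so $p\ge2$, or $p=1$ and $\beta_1\ge3$), I would again discard distinctness of the singletons, leaving the prefactor $N^{m-\sum_j\beta_j}/(N)_m$, and estimate $\sum_{i_1,\dots,i_p\text{ distinct}}\prod_j(\nu_t^{(i_j)})_{\beta_j}$ by retaining one factor $(\nu_t^{(i_1)})_2$ from the largest block (via $(\nu)_{\beta}\le(\nu)_2\nu^{\beta-2}$) and, when $p\ge2$, one factor $(\nu_t^{(i_2)})^2$ (via $(\nu)_{\beta}\le\nu^{\beta}$), while bounding every surplus power of $\nu_t^{(i)}$ by $N$ and every residual free index sum by $\sum_i\nu_t^{(i)}=N$. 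The bound \eqref{eq:defn_DN} is built precisely so that $N(N)_2D_N(t)\ge\sum_i(\nu_t^{(i)})_2\nu_t^{(i)}$ and $N^2(N)_2D_N(t)\ge\sum_i(\nu_t^{(i)})_2\sum_{j\ne i}(\nu_t^{(j)})^2$, which absorbs the remaining sums, and a short power count shows the accumulated powers of $N$ always recombine to $N^{\sum_j\beta_j-2}(N)_2$; hence $p_{\xi\eta}(t)\le\frac{N^{m-2}}{(N-2)_{m-2}}D_N(t)$ for every complex $\eta$.

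Summing over complex $\eta$, $1-p_{\xi\xi}(t)\le\frac{N^{m-2}}{(N-2)_{m-2}}\big(\binom{m}{2}c_N(t)+(\mathrm{Bell}(m)-1-\binom{m}{2})D_N(t)\big)$, so it remains to verify $\mathrm{Bell}(m)-1-\binom{m}{2}\le\binom{m}{2}B_m$. Since $\mathrm{Bell}(m)\le m!$, one has $(\mathrm{Bell}(m)-1)/\binom{m}{2}\le 2(m-2)!\le K(m-1)!(m-2)\exp(2\sqrt{2(m-2)})=B_m$ for all $m\ge3$ once $K$ is fixed large enough, independently of $m$ and $N$. The one delicate point is the combinatorial bookkeeping in the complex-merger step: the crude count by $\mathrm{Bell}(m)$ already yields a valid (exponential-in-$m$) constant, but to reach precisely the $B_m$ of \citet[Lemma~3.6]{brown2021}---whose $\exp(2\sqrt{2(m-2)})$ factor reflects a Laguerre/Bessel-type summation $\sum_k\binom{m-2}{k}2^k/k!$ over the ways the remaining $m-2$ lineages further coalesce---one organises the complex mergers around a designated merging pair rather than over all set partitions; everything else reduces to the elementary falling-factorial estimates above.
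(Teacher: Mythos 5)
Your proof is correct, but note that the paper itself does not prove this proposition: it simply cites \citet[Lemma 3.6]{brown2021} and remarks that keeping the $N$-dependence explicit yields the stated bound. Your argument is therefore a genuinely self-contained alternative, and it checks out. The decomposition $1-p_{\xi\xi}(t)=\sum_{\xi\prec\eta}p_{\xi\eta}(t)+\sum_{\eta\ \mathrm{complex}}p_{\xi\eta}(t)$ is valid because the $p_{\xi\eta}(t)$ in \eqref{eq:defn_pxieta} sum to one over coarsenings $\eta$ of $\xi$; your pair-merger bound $\binom{m}{2}\frac{N^{m-2}}{(N-2)_{m-2}}c_N(t)$ is exact bookkeeping; and I verified your ``short power count'' for a complex $\eta$ with non-singleton block sizes $\beta_1\geq\cdots\geq\beta_p\geq 2$: retaining $(\nu^{(i_1)})_2$ (and, if $p\geq 2$, $(\nu^{(i_2)})^2$), the surplus exponents contribute $N^{\sum_j\beta_j-3}$ (resp.\ $N^{\sum_j\beta_j-4}$ plus $N^{p-2}$ from freed indices), and the two halves of \eqref{eq:defn_DN} give precisely $\sum_i(\nu_t^{(i)})_2\nu_t^{(i)}\leq N(N)_2D_N(t)$ and $\sum_{i\neq j}(\nu_t^{(i)})_2(\nu_t^{(j)})^2\leq N^2(N)_2D_N(t)$, so each complex term is indeed at most $\frac{N^{m-2}}{(N-2)_{m-2}}D_N(t)$. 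Where you genuinely diverge from \citet{brown2021} is the final count: they organise complex mergers around a designated merging pair, which is what produces the $\exp(2\sqrt{2(m-2)})$ factor via a sum of the form $\sum_k\binom{m-2}{k}2^k/k!$, whereas your crude Bell-number count gives $(\mathrm{Bell}(m)-1)/\binom{m}{2}\leq 2(m-2)!$. Since $2(m-2)!\leq K(m-1)!(m-2)\exp(2\sqrt{2(m-2)})$ for all $m\geq 3$ with a universal $K$, your weaker combinatorics still lands inside the stated $B_{|\xi|}$, so the proposition as stated follows; the only cost is that your implied constant is not the sharp one of the cited lemma, which is immaterial for every use of Proposition~\ref{thm:pDelta_LB} in this paper (only the $N$- and $t$-dependence matters in the limits).
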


Define the asymptotic notation $\ON := (1 + O(N^{-1}))$, that is,
$f(N) = \ON$ if there exists $M>0$ such that for sufficiently large $N$, $|f(N) -1| \leq M/N$.
Throughout the remainder of the paper, we assume $N$ is large enough that any $\ON$ terms are positive.
The following upper bound can be found in \citet[Supplement, Lemma 1 Case 1]{koskela2022erratum}.
\begin{prop}
\label{thm:pDelta_UB}
Let $\xi \in \mathcal{P}_n$ and $B_{|\xi|}^\prime := \binom{|\xi|-1}{2}$. Then, for large enough $N$,
\begin{equation*}
p_{\xi\xi}(t)
\leq 1 - \ON \binom{|\xi|}{2}
        \left[ c_N(t) - B_{|\xi|}^\prime D_N(t) \right].
\end{equation*}
\end{prop}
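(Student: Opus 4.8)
The plan is to bound $p_{\xi\xi}(t)$ from above by isolating, within the defining sum \eqref{eq:defn_pxieta} with $\eta = \xi$ (so all $b_i = 1$), the contribution that corresponds to ``no two lineages merging'' and controlling it in terms of $c_N(t)$ and $D_N(t)$. Concretely, with $k := |\xi|$, we have $p_{\xi\xi}(t) = (N)_k^{-1} \sum_{i_1,\dots,i_k \text{ distinct}} \nu_t^{(i_1)} \cdots \nu_t^{(i_k)}$, and the sum over \emph{all} (not necessarily distinct) indices equals $N^k$ since $\sum_i \nu_t^{(i)} = N$. The difference between the unrestricted sum and the distinct-index sum is exactly the contribution from tuples with at least one repeated index, i.e.\ at least one coalescence event among the $k$ sampled lineages.

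First I would write $\sum_{\text{distinct}} \nu_t^{(i_1)} \cdots \nu_t^{(i_k)} = N^k - R$, where $R \geq 0$ collects all tuples with a repeat, and then extract the \emph{leading} part of $R$ — the tuples with exactly one coinciding pair and all other indices distinct from each other and from that pair. That leading part is $\binom{k}{2}(N-2)_{k-2} \sum_i (\nu_t^{(i)})_2 \cdot (\text{something})$; more carefully, it is $\binom{k}{2} \left( \sum_i (\nu_t^{(i)})_2 \right) \sum_{j_1, \dots, j_{k-2} \text{ distinct}, \neq i} \nu_t^{(j_1)} \cdots \nu_t^{(j_{k-2})}$, and the inner sum is at most $N^{k-2}$ and at least $(N)_{k-2}$ minus corrections. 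Dividing by $(N)_k$ and using $\sum_i (\nu_t^{(i)})_2 = (N)_2 c_N(t)$ produces the term $\ON \binom{k}{2} c_N(t)$ after absorbing ratios like $(N-2)_{k-2}/(N)_{k-2} \cdot (N)_2/(N)_k \cdot N^{\cdots}$ into $\ON$ factors; one has to check these ratios are indeed $1 + O(N^{-1})$ with constants depending only on $k = |\xi|$, which is fine since $n$ is fixed.

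The delicate part is the sign and the error terms: we need a clean \emph{upper} bound on $p_{\xi\xi}(t)$, hence a clean \emph{lower} bound on $R$, so the $\binom{k}{2} c_N(t)$ term should come in with a ``$-$'' sign and the leftover (higher-order repeats: triple coincidences, two separate coinciding pairs, etc.) must be bounded \emph{above} by a constant multiple of $D_N(t)$ so that it can be added back with a ``$+B_{|\xi|}' D_N(t)$'' correction. This is where $D_N(t)$ from \eqref{eq:defn_DN} enters: its structure $\frac{1}{N(N)_2}\sum_i (\nu_t^{(i)})_2 \{ \nu_t^{(i)} + \frac1N \sum_{j\neq i}(\nu_t^{(j)})^2\}$ is precisely what dominates ``three lineages into one'' ($(\nu_t^{(i)})_3$-type terms, controlled by $(\nu_t^{(i)})_2 \nu_t^{(i)}$) and ``two disjoint pairs'' ($(\nu_t^{(i)})_2 (\nu_t^{(j)})_2$-type terms, controlled by $(\nu_t^{(i)})_2 (\nu_t^{(j)})^2$) simultaneously. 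I would follow \citet[Supplement, Lemma 1 Case 1]{koskela2022erratum} here: bound each such residual term by rewriting falling factorials as products, using $(\nu)_2 \le \nu^2$ and similar crude inequalities, counting the number of index-tuple patterns (a combinatorial factor depending only on $k$, giving $\binom{k-1}{2}$), and matching against the two pieces inside the braces of $D_N(t)$.

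The main obstacle I anticipate is the bookkeeping of the combinatorial constant: showing that the residual repeat-terms are bounded by \emph{exactly} $\ON \binom{k}{2} B_{|\xi|}' D_N(t)$ with $B_{|\xi|}' = \binom{|\xi|-1}{2}$ requires carefully enumerating which tuples of $(i_1,\dots,i_k)$ with repeated entries give rise to which monomials in the $\nu_t^{(i)}$, and then mapping those monomials onto the two summands in $D_N(t)$ without overcounting or losing the constant. The $\ON$ factors themselves are routine — they come from comparing $N^k$, $(N)_k$, $(N-2)_{k-2}$ and such, all of which agree up to relative error $O(N^{-1})$ for fixed $k$ — but one must be careful about the \emph{direction} of each approximation so that the inequality is preserved, which is why the statement is phrased with a single $\ON$ multiplying the whole bracket rather than separate ones on each term. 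Since $n$ (and hence every $|\xi| \le n$) is fixed while $N \to \infty$, all the $k$-dependent constants are harmless, and the argument closes.
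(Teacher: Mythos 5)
The paper does not prove this proposition itself but defers to \citet[Supplement, Lemma 1 Case 1]{koskela2022erratum}, and your plan --- expand $p_{\xi\xi}(t)$ via \eqref{eq:defn_pxieta} with all $b_i=1$, compare the distinct-index sum to $N^{|\xi|}$, extract the single-pair-coincidence contribution as $\ON\binom{|\xi|}{2}c_N(t)$, and absorb triple coincidences and disjoint double pairs into $B'_{|\xi|}D_N(t)$ using the two summands of \eqref{eq:defn_DN} --- is exactly the argument of that cited lemma, so the approach matches. The one point your sketch glosses over is that a coinciding pair of indices contributes $(\nu_t^{(i)})^2$ rather than $(\nu_t^{(i)})_2$, and the resulting $\nu_t^{(i)}$ cross-terms must cancel the excess $N^{|\xi|}/(N)_{|\xi|}-1=O(N^{-1})$ exactly (not merely up to $\ON$) in order to land on the stated leading constant $1$; once falling factorials are introduced, the residual terms enter the lower bound for $1-p_{\xi\xi}(t)$ with \emph{negative} sign, so the $D_N$ correction is needed to control those, not the higher-order repeat tuples themselves, which are non-negative and can simply be dropped when lower-bounding $R$ --- your sign narrative has this slightly backwards, though the computation you would actually carry out (following the cited lemma) closes correctly.
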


\section{The convergence theorem}

In this section we give our main result, starting by defining a suitable metric space.
Denote by $\mathcal{D}$ the set of functions mapping $[0,\infty)$ to $\mathcal{P}_n$ that are right-continuous with left limits.
Our rescaled genealogical process $(G^{(n,N)}_{\tau_N(t)})_{t\geq0}$ and the $n$-coalescent are piecewise-constant functions mapping time $t\in[0,\infty)$ to a partition, and thus live in $\mathcal{D}$.
Finally, equip $\mathcal{P}_n$ with the discrete metric: for any $\xi, \eta \in \mathcal{P}_n$,
\begin{equation*}
\rho(\xi,\eta) 
= 1- \delta_{\xi\eta} 
:= \begin{cases}
    0 &\text{if } \xi=\eta, \\
    1 &\text{otherwise},
\end{cases}
\end{equation*}
and $\mathcal{D}$ with the associated Skorokhod (J1) topology and its Borel $\sigma$-algebra.

\begin{theorem}\label{thm:weakconv}
Let $\nu_t^{(1:N)}$ denote the offspring numbers in an interacting particle system satisfying \textup{\ref{standing_assumption}} and such that, for any sufficiently large $N$ and for all $t \in [ 0, \infty )$, $\Prob[ \tau_N(t) = \infty ] =0$. Suppose that there exists a deterministic sequence $(b_N)_{N\in\mathbb{N}}$ such that ${\lim}_{N\to\infty} b_N =0$ and, for all large enough $N$,
\begin{equation}\label{eq:mainthmcondition2}
\frac{1}{(N)_3} \sum_{i = 1}^N \E\left[ (\nu_t^{(i)})_3 \midd \mathcal{F}_{t - 1} \right]  \leq b_N \frac{1}{(N)_2} \sum_{i = 1}^N \E\left[ (\nu_t^{(i)})_2 \midd \mathcal{F}_{t - 1} \right],
\end{equation}
almost surely, uniformly in $t \geq 1$.
Then the rescaled genealogical process $(G_{\tau_N(t)}^{(n,N)})_{t\geq0}$ converges weakly in $\mathcal{D}$ to Kingman's $n$-coalescent as $N \to \infty$.
\end{theorem}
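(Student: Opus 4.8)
The plan is to deduce weak convergence from two ingredients: convergence of the finite-dimensional distributions, which is already available from \citet{brown2021} under precisely the hypothesis \eqref{eq:mainthmcondition2}, and tightness of the laws of $(G^{(n,N)}_{\tau_N(t)})_{t\geq0}$ in $\mathcal{D}$. These combine to give weak convergence by \citet[Chapter~3, Corollary~7.4 and Theorem~7.8]{ethier1986}, using that the limiting $n$-coalescent is almost surely continuous at each fixed time. The assumption $\Prob[\tau_N(t)=\infty]=0$ ensures each $(G^{(n,N)}_{\tau_N(t)})_{t\geq0}$ is a genuine $\mathcal{D}$-valued random element. Since $\mathcal{P}_n$ is finite, the compact containment condition is automatic, and it remains to verify, for every $\epsilon,T>0$, that $\lim_{\delta\to0}\limsup_{N\to\infty}\Prob[w'(G^{(n,N)}_{\tau_N(\cdot)},\delta,T)\geq\epsilon]=0$, where $w'$ is the Skorokhod modulus. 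Because $\rho$ is the discrete metric, this event is essentially the event that some maximal constancy interval (``spell'') of $G^{(n,N)}_{\tau_N(\cdot)}$ meeting $[0,T]$ has rescaled length at most $\delta$; and since each jump strictly decreases the block count, there are at most $n-1$ such spells. Hence it suffices to prove (i) that the probability of any merger among the $n$ sampled lineages that is not a binary merger of two blocks vanishes as $N\to\infty$, and (ii) that, uniformly in large $N$, the probability that some spell has rescaled length at most $\delta$ tends to $0$ as $\delta\to0$.

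For (i), condition on the offspring counts: by the construction of $D_N$ in \eqref{eq:defn_DN} and assumption \ref{standing_assumption}, the conditional probability of a non-binary merger among the sampled lineages at generation $r$ is at most $c_n D_N(r)$ for a constant $c_n$ depending only on $n$. Summing over $r=1,\dots,\tau_N(T)$, applying Lemma~\ref{thm:kjjslemma2}, and then invoking \eqref{eq:mainthmcondition2} --- which, as in \citet{brown2021}, controls $\E[D_N(r)\mid\mathcal{F}_{r-1}]$ by a multiple of $(b_N+O(N^{-1}))\E[c_N(r)\mid\mathcal{F}_{r-1}]$ --- a second application of Lemma~\ref{thm:kjjslemma2} together with Proposition~\ref{thm:cN_properties}\ref{item:cN_property4} bounds $\E[\sum_{r=1}^{\tau_N(T)}c_N(r)]$ by $T+1$. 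The whole expression is therefore $O((b_N+N^{-1})(T+1))\to0$.

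For (ii), fix a spell at a state $\xi$ with $|\xi|=k\leq n$, beginning at generation $g$ and rescaled time $u$. If this spell has rescaled length at most $\delta$, then the genealogy leaves $\xi$ at some generation in $\{g+1,\dots,\tau_N(u+\delta)\}$; a union bound over these generations, together with the lower bound of Proposition~\ref{thm:pDelta_LB}, shows that the conditional probability of this event, given the offspring counts, is at most $\binom{k}{2}(1+O(N^{-1}))[\sum_{r=g+1}^{\tau_N(u+\delta)}c_N(r)+B_k\sum_{r=g+1}^{\tau_N(u+\delta)}D_N(r)]$. By the definition \eqref{eq:defn_tauN} of $\tau_N$ one has $\sum_{r=g+1}^{\tau_N(u+\delta)-1}c_N(r)<\delta$, so the first sum is at most $\delta$ plus the single boundary term $c_N(\tau_N(u+\delta))$. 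Summing over the at most $n-1$ spells and taking expectations, the $D_N$ sums are controlled exactly as in (i), leaving the boundary terms to be handled; this is where the conditional coupling in a random environment of the abstract is used. Working in the random environment generated by the offspring counts --- in which, by \ref{standing_assumption}, the genealogy is a time-inhomogeneous Markov chain whose holding probabilities $p_{\xi\xi}(r)$ obey the two-sided bounds of Propositions~\ref{thm:pDelta_LB} and \ref{thm:pDelta_UB} --- one couples $(G^{(n,N)}_{\tau_N(t)})_{t\geq0}$ to Kingman's $n$-coalescent so that the two agree off an event whose probability is bounded by the quantities already estimated, and on which the boundary generations either carry negligible $c_N$-mass or host a multiple merger (already excluded by (i)). Letting $N\to\infty$ and then $\delta\to0$ sends every error term to $0$, which establishes the modulus-of-continuity bound, hence tightness, and with the finite-dimensional convergence completes the proof.

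The main obstacle is precisely the interaction between the non-Markovian reverse-time dynamics and the random, environment-dependent time change $\tau_N$. The filtration $(\mathcal{F}_r)$ does not render the genealogy Markov; the stopping generations $\tau_N(u+\delta)$ are themselves functionals of the environment, so the conditioning needed for the union bounds above must be arranged carefully; and converting discrete to continuous time introduces boundary generations whose $c_N$ need not be small. Overcoming these requires the conditional coupling in the random environment and repeated use of Lemma~\ref{thm:kjjslemma2} to move random, $\tau_N$-indexed sums inside conditional expectations; the bookkeeping of the resulting error terms, rather than any single estimate, is where the work lies.
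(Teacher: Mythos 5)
Your overall architecture (f.d.d.\ convergence from \citet{brown2021} plus control of the modulus of continuity, combined via \citet[Chapter 3, Corollary 7.4 and Theorem 7.8]{ethier1986}) matches the paper, and your step (i) is sound. But step (ii) contains a genuine gap at exactly the point where the paper does its real work. You bound the probability that a spell starting at generation $g$ and rescaled time $u$ ends within $\delta$ by a union bound over generations $g+1,\dots,\tau_N(u+\delta)$, conditioning on the offspring counts. The pair $(g,u)$ is, however, a random functional of both the environment and the past of the genealogy, and the genealogy is \emph{not} Markov with respect to $(\mathcal{F}_r)$: the event that a spell begins at $g$ in state $\xi$ is correlated with the offspring counts in the window $\{g+1,\dots,\tau_N(u+\delta)\}$, so the expectation of the indicator of that event times $\sum_r [c_N(r)+B\,D_N(r)]$ does not factor, and neither Lemma~\ref{thm:kjjslemma2} nor Proposition~\ref{thm:cN_properties}\ref{item:cN_property4} lets you replace the random window by a deterministic one. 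Your proposed fix --- ``one couples $(G^{(n,N)}_{\tau_N(t)})$ to Kingman's $n$-coalescent so that the two agree off an event whose probability is bounded by the quantities already estimated'' --- is asserted, not constructed; it is essentially the workaround of \citet[Corollary 2]{koskela2020annals} that the paper is explicitly written to avoid, and it is not the coupling the paper uses.

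What the paper actually does is couple the genealogy to an auxiliary process $(Z_t,S_t)$ in the random environment $\mathcal{F}_\infty$, defined in \eqref{eq:constructed_chain}, in which $Z_t$ jumps with probability $p_t = 1-p_{\Delta\Delta}(t)$ \emph{independently of the current state} $S_t$ (this is why Lemma~\ref{thm:maximum_pr} is needed). The jumps of $(Z_t)$ dominate those of $(S_t)$, so its modulus of continuity dominates, and because its jump probabilities do not depend on the state, the joint law of its holding times can be written as an explicit expectation of sum-products of $p_r$ and $1-p_r$ over the random environment. Proving that these holding times converge to i.i.d.\ $\Exp(\alpha_n)$ variables (Lemma~\ref{thm:holdingtimes_distn}, via the basis and induction Lemmata \ref{thm:basis}--\ref{thm:induction_sumprodcN} and the sum-product bounds of Section~\ref{sec:bounds_on_sum-products}) is the quantitative content that your step (ii) presupposes but does not supply. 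Without that construction, or an equivalent device for decoupling the spell locations from the environment, the union bound over spells cannot be closed.
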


\begin{rmk}
Condition \eqref{eq:mainthmcondition2} is very natural: it requires the rate of (combinations of) mergers involving three or more lineages to be vanishingly small in comparison to that of binary mergers.
The sequence $( b_N )_{ N \in \mathbb{N} }$ controls the rate at which the ratio of rates of large and binary mergers vanishes, and can decay to zero arbitrarily slowly.
The exact values of its entries are not special; it is just the sequence that plays an implicit role in the usual little-o notation.
As mentioned in Section \ref{sec:intro}, a natural analogue of \eqref{eq:mainthmcondition2} is known to be necessary and sufficient for convergence to the $n$-coalescent in the neutral case \citep[Equation (16)]{mohle2000}.
\end{rmk}

\begin{proof}[Proof of Theorem~\ref{thm:weakconv}]
The structure of the proof follows \citet{mohle1999}, albeit with considerable technical complication due to the dependence between generations (non-neutrality) in our model.
To make it digestible, the proof is broken down into a number of results which are organised into sections; the relationships between these are shown in Figure~\ref{fig:weakconv_structure}.

Convergence of the finite-dimensional distributions was proved in \citet[Theorem 3.2]{brown2021}.
Strengthening this to weak convergence on the space of processes amounts to establishing relative compactness of the sequence $\{ (G_{\tau_N(t)}^{(n,N)})_{t\geq0} \}_{N\in\mathbb{N}}$.
Since $\mathcal{P}_n$ is finite and therefore complete and separable, and the sample paths of $(G_{\tau_N(t)}^{(n,N)})_{t\geq0}$ live in $\mathcal{D}$, we can apply \citet[Chapter 3, Corollary 7.4]{ethier1986} which states that a sequence of processes $\{ (G_{\tau_N(t)}^{(n,N)})_{t\geq0} \}_{N\in\mathbb{N}}$ is relatively compact if and only if the following two conditions hold:
\begin{enumerate}
\item \label{item:relcomp1} For every $\epsilon>0$ and rational $t\geq 0$, there exists a compact set $\Gamma_{ \epsilon, t } \subseteq \mathcal{P}_n$ such that
\begin{equation*}
\liminf_{N\to\infty} \Prob\left[ G_{\tau_N(t)}^{(n,N)} \in \Gamma_{ \epsilon, t } \right] 
\geq 1-\epsilon.
\end{equation*}
\item \label{item:relcomp2} For every $\epsilon>0$, $t>0$ there exists $\delta>0$ such that
\begin{equation}\label{modulus_of_continuity}
\liminf_{N\to\infty} \Prob\left[ \omega\left( G_{\tau_N(\cdot)}^{(n,N)}, \delta, t \right) < \epsilon \right] 
\geq 1-\epsilon,
\end{equation}
where $\omega$ is the modified modulus of continuity:
\begin{equation*}
\omega\left( G_{\tau_N(\cdot)}^{(n,N)}, \delta, t \right) := \inf \max_{i \in [K]} 
        \sup_{u,v \in [T_{i-1}, T_i)} \rho\left( 
        G_{\tau_N(u)}^{(n,N)}, G_{\tau_N(v)}^{(n,N)} \right),
\end{equation*}
with the infimum taken over all partitions of the form $0=T_0<T_1<\cdots <T_{K-1} <t \leq T_K$ (for any $K$) such that $\min_{i\in[K]} (T_i - T_{i-1}) > \delta$. 
\end{enumerate}
Since $\mathcal{P}_n$ is finite and hence compact, Condition~\ref{item:relcomp1} is satisfied automatically with $\Gamma_{ \epsilon, t } = \mathcal{P}_n$. 
Intuitively, Condition~\ref{item:relcomp2} ensures that the jumps of the process are well-separated. 
In our case, $\rho( G_{\tau_N(u)}^{(n,N)}, G_{\tau_N(v)}^{(n,N)} ) = 1$ if there is at least one jump between times $u$ and $v$, and 0 otherwise.
The supremum and maximum indicate whether there is a jump inside any of the intervals of the given partition; this can be zero only if all of the jumps up to time $t$ occur exactly at the times $T_{ 0 : K }$. 
The infimum over all allowed partitions can only equal zero if no two jumps occur less than $\delta$ (unscaled) time apart.

To prove Theorem \ref{thm:weakconv}, it remains to verify Condition \ref{item:relcomp2}.
To do this, we use a coupling with a process for which Condition \ref{item:relcomp2} is easier to check, and which will imply that it also holds for the genealogical process of interest.
Define $p_t := \max_{\xi\in \mathcal{P}_n} \{1 - p_{\xi\xi}(t)\} = 1 - p_{\Delta\Delta}(t)$, where $\Delta = \{ \{1\},\dots, \{n\} \}$.
For a proof that the maximum is attained at $\xi = \Delta$, see Lemma~\ref{thm:maximum_pr}. 
Following \citet{mohle1999}, we construct the process $(Z_t, S_t)_{t \in \mathbb{N}_0}$ on $\mathbb{N}_0 \times \mathcal{P}_n$ with initial state $( Z_0, S_0 ) = ( 0, \Delta )$, and conditional transition probabilities
\begin{align}
\Prob[ Z_t = j , S_t = \eta \mid& Z_{t-1} = i, S_{t-1} = \xi, \mathcal{F}_\infty ] \notag\\
&= \begin{cases}
1 - p_t &\quad \text{if } j=i \text{ and } \eta=\xi, \\
p_{\xi\xi}(t) + p_t - 1  &\quad \text{if } j=i+1 \text{ and } \eta=\xi, \\
p_{\xi\eta}(t) &\quad \text{if } j=i+1 \text{ and } \eta\neq\xi, \\
0 &\quad \text{otherwise}.
\end{cases}
\label{eq:constructed_chain}
\end{align}
The definition of $p_t$ ensures that the second case of \eqref{eq:constructed_chain} is non-negative, attaining the value zero when $\xi=\Delta$.

Unlike the corresponding process in \citet{mohle1999}, the transition probabilities in \eqref{eq:constructed_chain} depend on offspring counts. Thus, $(Z_t, S_t)$ is only Markovian conditional on $\mathcal{F}_\infty$, and can be thought of as a time-inhomogeneous Markov chain in a random environment.
Marginally, $(S_t)$ has the same distribution as the genealogical process of interest, while $(Z_t)$ jumps whenever $(S_t)$ does, but also has some extra jumps.
The jump times of $(Z_t)$ do not depend on the current state, making it much easier to analyse.
Our construction also resembles that of \citet{mohle2002coal}, where the process $(S_t)$ is also time-inhomogeneous but still Markovian without conditioning on a random environment.

The coupling of jumps implies that the modulus of continuity of $(Z_t)$ is at least as large as that of $(S_t)$.
Hence, we will show that \eqref{modulus_of_continuity} holds for $(Z_t)$, and conclude that it also holds for the genealogical processes of interest.
Denote by $0=T_0^{(N)}<T_1^{(N)}<\dots$ the jump times of the rescaled process $(Z_{\tau_N(t)})_{t\geq0}$, and by $\varpi_i^{(N)} := T_i^{(N)} - T_{i-1}^{(N)}$ the corresponding holding times.
Suppose that for some fixed $\varpi_1^{(N)}, \varpi_2^{(N)}, \dots$ and $t>0$, there exists $m \in \mathbb{N}$ and $\delta >0$ such that
$\varpi_i^{(N)} > \delta$ for all $i\in [m]$, and
$T_m^{(N)} \geq t$.
Then
$K_N := \min\{i : T_i^{(N)} \geq t\}$
is well-defined with $1\leq K_N \leq m$,
and $T_{ 1 : K_N }^{(N)}$ is a partition of the form required for Condition~\ref{item:relcomp2}.
Indeed $(Z_{\tau_N(\cdot)})$ is constant on every interval $[ T_{i-1}^{(N)} , T_i^{(N)} )$ by construction, so $\omega( (Z_{\tau_N(\cdot))} , \delta, t ) = 0$. 
We therefore have that
for each $m \in \mathbb{N}$ and $\delta >0$,
\begin{equation*}
\Prob\left[ \omega\left( (Z_{\tau_N(\cdot)}) , \delta, t \right) < \epsilon \right]
\geq \Prob\left[ T_m^{(N)} \geq t, \varpi_i^{(N)} > \delta \,\forall i\in [m] \right] .
\end{equation*}
Thus, a sufficient condition for Condition~\ref{item:relcomp2} to hold is: 
for any $\epsilon>0$, $t>0$, there exist $m\in\mathbb{N}$, $\delta>0$ such that
\begin{equation}\label{eq:condition2b}
\liminf_{N\to\infty} \Prob\left[ T_m^{(N)} \geq t, \varpi_i^{(N)} > \delta \,\forall i\in [m]\right]
\geq 1-\epsilon .
\end{equation}
By Lemma~\ref{thm:holdingtimes_distn}, below, the limiting distributions of $\varpi_i^{(N)}$ are i.i.d.\ $\Exp(\alpha_n)$, where $\alpha_n := n(n-1)/2$, so
\begin{equation*}
\limsup_{N\to\infty} \Prob\left[ \varpi_i^{(N)} \leq \delta \right] 
= 1- \liminf_{N\to\infty} \Prob\left[ \varpi_i^{(N)} > \delta \right] 
= 1- e^{-\alpha_n \delta}
\end{equation*}
for each $i$, and
\begin{align*}
\limsup_{N\to\infty} \Prob\left[ T_m^{(N)} < t \right]
&= 1- \liminf_{N\to\infty} \Prob\left[ T_m^{(N)} \geq t \right] \\
&= 1- \liminf_{N\to\infty} \Prob\left[ \varpi_1^{(N)} + \cdots + \varpi_m^{(N)} \geq t \right] \\
&= 1- e^{-\alpha_n t} \sum_{i=0}^{m-1} \frac{(\alpha_n t)^i}{i!},
\end{align*}
using the series expansion for the Erlang CDF \citep[see for example][Chapter 15]{forbes2011}.
Now
\begin{align*}
\liminf_{N\to\infty} 
        \Prob[ T_m^{(N)} \geq t, &\varpi_i^{(N)} > \delta 
        \,\forall i\in [m] ] \\
&\geq 1 - \limsup_{N\to\infty} \Prob\left[ T_m^{(N)} < t \right] 
        - \sum_{i=1}^m \limsup_{N\to\infty} \Prob\left[ \varpi_i^{(N)} \leq \delta\right]\\
&= 1- \Bigg( 1- e^{-\alpha_n t} \sum_{i=0}^{m-1} 
        \frac{(\alpha_n t)^i}{i!} \Bigg) - m( 1- e^{-\alpha_n \delta} ) ,
\end{align*}
which can be made $\geq 1-\epsilon$ by taking $m$ sufficiently large and $\delta$ sufficiently small.
Since this argument applies for any $\epsilon$ and $t$, \eqref{eq:condition2b} is satisfied.
Hence, so is Condition~\ref{item:relcomp2}, and the proof is complete.
\end{proof}

The structure of the proof of weak convergence in Theorem \ref{thm:weakconv} resembles that of the neutral case \citep[Theorem 3.1]{mohle1999}.
The complications arising from non-neutrality have been subsumed into Lemma \ref{thm:holdingtimes_distn}, stated below.
As illustrated in Figure \ref{fig:weakconv_structure}, its proof relies on a number of technical results, which we state and prove in Section \ref{sec:technical_results}, and particularly in Section \ref{sec:main_components_of_induction}.

The neutral techniques of \cite{mohle1999} have also been used to establish weak convergence of genealogical processes to more general coalescent models featuring simultaneous mergers of more than two lineages \citep{mohle2001, mohle2003, sagitov2003}.
Proofs of these results invariably contain two main parts: convergence of finite-dimensional distributions (established via convergence of generators in the Markovian setting), and control of the modulus of continuity.
We expect that this strategy will allow similar results for convergence of non-neutral models to multiple merger coalescents: use an approach reminiscent of \citet[Theorem 1]{koskela2020annals} to establish convergence of finite-dimensional distributions and then adapt Theorem \ref{thm:weakconv} to control the relevant modulus of continuity.

Throughout the remainder of the manuscript, we write $x_{ 1 : k } \leq y_{ 1 : k }$ for vectors $x_{ 1 : k } := ( x_1, \ldots, x_k )$ and $y_{ 1 : k }$ if the inequality holds elementwise.

\begin{lemma}\label{thm:holdingtimes_distn}
Assume \eqref{eq:mainthmcondition2} holds.
Then, as $N\to\infty$, the finite-dimensional distributions of $\varpi_1^{(N)} , \varpi_2^{(N)} , \dots$ converge to those of $\varpi_1, \varpi_2, \dots$, where the $\varpi_i$ are independent $\Exp(\alpha_n)$-distributed random variables.
\end{lemma}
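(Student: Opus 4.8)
The plan is to show that, conditionally on the random environment $\mathcal{F}_\infty$, the rescaled jump times $T_i^{(N)}$ of $(Z_{\tau_N(t)})$ are sums of conditionally-geometric holding times, and that these converge to i.i.d.\ exponentials. Concretely, in the unscaled chain $(Z_t)$, the time between consecutive jumps is, conditionally on $\mathcal{F}_\infty$, a geometric-type random variable: at generation $r$ the chain jumps with conditional probability $p_r$. Hence for the $i$-th unscaled holding time $\sigma_i^{(N)}$ one has, writing $R_{i-1}$ for the generation of the $(i-1)$-th jump, $\Prob[\sigma_i^{(N)} > k \mid \mathcal{F}_\infty] = \prod_{r=R_{i-1}+1}^{R_{i-1}+k}(1-p_r)$. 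Translating through the time change $\tau_N$, the rescaled holding time $\varpi_i^{(N)}$ exceeds a level $x$ iff $\sum_{r} c_N(r) < x$ summed over the generations before the next jump, so the key quantity to control is $\sum_{r=\tau_N(s)+1}^{\tau_N(s+x)} p_r$ versus $\alpha_n \sum_{r=\tau_N(s)+1}^{\tau_N(s+x)} c_N(r)$, the latter being $\approx \alpha_n x$ by Proposition~\ref{thm:cN_properties}\ref{item:cN_property4}.

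The main steps I would carry out are: (1) Express $\Prob[\varpi_1^{(N)} > x_1, \dots, \varpi_k^{(N)} > x_k \mid \mathcal{F}_\infty]$ as a product of survival functions of the conditionally-geometric unscaled holding times, using the strong Markov property of $(Z_t,S_t)$ given $\mathcal{F}_\infty$ and the fact that the jump mechanism of $(Z_t)$ does not depend on the current state $S_t$ (only on the generation index via $p_r$). (2) Using $\log(1-p_r) = -p_r + O(p_r^2)$ and the bound $p_r \le \binom{n}{2}\ON[c_N(r) + B_n D_N(r)]$ from Proposition~\ref{thm:pDelta_LB} together with $\sum_r D_N(r) \to 0$ over bounded rescaled time windows (which follows from condition \eqref{eq:mainthmcondition2}, Lemma~\ref{thm:kjjslemma2}, and Proposition~\ref{thm:cN_properties}\ref{item:cN_property1}), show that $\sum_{r=\tau_N(s)+1}^{\tau_N(s+x)} p_r \to \alpha_n x$ in an appropriate sense. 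Simultaneously, Proposition~\ref{thm:pDelta_UB} gives the matching lower bound $p_r \ge \ON\binom{n}{2}[c_N(r) - B_n' D_N(r)]$, so one sandwiches $\sum p_r$ between $\alpha_n x (1+o(1))$ from both sides. Hence the conditional survival function $\prod(1-p_r) \to e^{-\alpha_n x}$. (3) Handle the fact that the summation limits $\tau_N(\cdot)$ themselves are random and that $R_{i-1}$ is random by noting the estimates in (2) are uniform in the starting generation $s$, so the bound does not depend on where the previous jump occurred; then $\sum_{r}$ over $p_r^2$ is controlled by $\max_r p_r \cdot \sum_r p_r \to 0$ since $\max_r p_r \le \binom{n}{2}\ON[c_N(r)+B_nD_N(r)]$ and $c_N(r), D_N(r)$ are each individually $o(1)$ uniformly (they are bounded by a single generation's contribution, which vanishes). (4) Conclude that $\E[\prod_i \mathbbm{1}\{\varpi_i^{(N)} > x_i\} \mid \mathcal{F}_\infty] \to \prod_i e^{-\alpha_n x_i}$ and take expectations over the environment by bounded convergence to obtain convergence of the unconditional finite-dimensional distributions of $(\varpi_i^{(N)})$ to i.i.d.\ $\Exp(\alpha_n)$.

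The hard part will be step (2)–(3): controlling the sum $\sum_r p_r^2$ (and more generally the error terms from $\log(1-p_r)$) and the multiple-merger correction $\sum_r D_N(r)$ over a random, environment-dependent window $[\tau_N(s)+1, \tau_N(s+x)]$, uniformly in the random offset $s = T_{i-1}^{(N)}$. This is where non-neutrality bites: since the pre-limiting genealogy is not Markov, one cannot simply restart the argument at each jump; instead one needs the estimates to hold \emph{almost surely in the environment} and uniformly in the starting generation. The tool is Lemma~\ref{thm:kjjslemma2}, which lets one pass from sums of $D_N(r)$ to sums of their $\mathcal{F}_{r-1}$-conditional expectations, combined with condition \eqref{eq:mainthmcondition2} which bounds the conditional three-merger rate by $b_N$ times the conditional two-merger rate; summing the latter over a rescaled window of length $x$ gives $O(x)$ by Proposition~\ref{thm:cN_properties}\ref{item:cN_property4}, so the whole $D_N$-contribution is $O(b_N x) \to 0$. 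I would isolate this uniform-in-environment estimate as its own technical lemma (the ``main component of the induction'' alluded to in the text) and then assemble the pieces above into the proof of Lemma~\ref{thm:holdingtimes_distn}.
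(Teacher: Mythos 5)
Your high-level strategy---sandwich $p_r$ between $\ON\binom{n}{2}[c_N(r)\pm\text{const}\cdot D_N(r)]$ using Propositions~\ref{thm:pDelta_LB} and~\ref{thm:pDelta_UB}, show the $D_N$-contribution vanishes via \eqref{eq:mainthmcondition2} and Lemma~\ref{thm:kjjslemma2}, and conclude $\prod(1-p_r)\to e^{-\alpha_n x}$---is the same as the paper's for the first holding time. However, there are two genuine gaps. First, your step (3) rests on the claim that $c_N(r)$ and $D_N(r)$, hence $\max_r p_r$, are ``individually $o(1)$ uniformly.'' This is not justified and is false pathwise: nothing in \eqref{eq:mainthmcondition2} prevents a single generation from having $c_N(r)$ close to $1$ (e.g.\ one parent producing almost all offspring) on an event of small but positive probability. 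Consequently the expansion $\log(1-p_r)=-p_r+O(p_r^2)$ is uncontrolled, and the bound $\sum_r p_r^2\leq\max_r p_r\cdot\sum_r p_r\to 0$ does not follow. The available controls are only in probability or in expectation ($\E[c_N(\tau_N(t))]\to 0$, $\E[\sum_r c_N(r)^2]\to 0$, $\E[\sum_r D_N(r)]\to 0$), which is why the paper avoids logarithms altogether: it expands $\prod_r(1-p_r)$ multinomially, bounds the resulting sum-products via Lemmata~\ref{thm:sumprod1}--\ref{thm:sumprod3}, and inserts indicator events $E_N^1,E_N^2,E_N^3$ (whose probabilities tend to $1$ by Lemmata~\ref{thm:indicators_cN}--\ref{thm:indicators_c2}) to keep the pre-limiting bounds of the correct sign. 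You would need some version of this machinery to make your step (2)--(3) rigorous.

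Second, your step (1) treats the joint conditional law of $(\varpi_1^{(N)},\dots,\varpi_k^{(N)})$ given $\mathcal{F}_\infty$ as a product of geometric survival functions, but the generation $R_{i-1}$ at which the $(i-1)$-th jump occurs is random even conditionally on the environment, and the $p_r$ are not constant in $r$. The conditional joint law is therefore a \emph{sum} over admissible jump generations $r_1<\dots<r_k$ of $\bigl(\prod_i p_{r_i}\bigr)\bigl(\prod_{r\notin\{r_1,\dots,r_k\}}(1-p_r)\bigr)$; asserting uniformity in the starting generation does not dispose of this combinatorial sum. The paper confronts it head-on: it works with the jump times $T^{(N)}_{1:k}$ rather than the holding times, proves \eqref{eq:518} by induction on $k$, and evaluates the limit of $\E[\sum_{r_1<\dots<r_k}\prod_i c_N(r_i)]$ via the Erlang-type identity in Lemma~\ref{thm:induction_sumprodcN}, with matching upper and lower bounds in Lemmata~\ref{thm:inductionUB} and~\ref{thm:inductionLB}. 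Relatedly, since the environment estimates hold only in expectation, your final appeal to bounded convergence of an almost-surely convergent conditional probability is not available as stated; the paper sidesteps this by computing unconditional expectations of the explicit conditional expressions throughout.
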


\begin{proof}
For any $k \in \mathbb{N}$, there is a continuous bijection between the jump times $T_{ 1 : k }^{(N)}$ and the holding times $\varpi_{ 1 : k }^{(N)}$.
Thus, convergence of $\varpi_{ 1 : k }^{(N)}$ to $\varpi_{ 1 : k }$ is equivalent to convergence of the jump times to $T_{ 1 : k }$, where $T_i := \varpi_1+\dots+\varpi_i$. 
We will work with the jump times, following \citet[Lemma 3.2]{mohle1999}.
The idea is to prove by induction that, for any $k\in\mathbb{N}$ and $t_{ 1 : k } > 0$,
\begin{equation}\label{eq:518}
\lim_{N\to\infty} \Prob\left[ T_{ 1 : k }^{(N)} \leq t_{ 1 : k } \right]
= \Prob[ T_{ 1 : k } \leq t_{ 1 : k } ] .
\end{equation}
Take the basis case $k=1$, for which $\Prob[ T_1 \leq t ] 
= \Prob[ \varpi_1 \leq t ] = 1 - e^{-\alpha_n t}$
and $T_1^{(N)} >t$ if and only if $( Z_{ \tau_N( t ) } )$ has no jumps up to time $t$:
\begin{equation*}
\Prob\left[T_1^{(N)} >t \right]
= \E\left[ \Prob[ T_1^{(N)} >t \mid \mathcal{F}_\infty ] \right]
= \E\Bigg[ \prod_{r=1}^{\tau_N(t)} (1-p_r) \Bigg] .
\end{equation*}
Lemma~\ref{thm:basis} shows that this probability converges to $e^{-\alpha_n t}$ as required.

For the induction step, assume that \eqref{eq:518} holds for some $k$. 
We have
\begin{align*}
\Prob\left[ T_{ 1 : ( k + 1 ) }^{(N)} \leq t_{ 1 : ( k + 1 ) } \right]
= \Prob\left[ T_{ 1 : k }^{(N)} \leq t_{ 1 : k } \right] - \Prob\left[ T_{ 1 :  k }^{(N)} \leq t_{ 1 : k }, T_{k+1}^{(N)} > t_{k+1} \right].
\end{align*}
The first term on the right-hand side converges to $\Prob[ T_{ 1 : k } \leq t_{ 1 : k } ]$ by the induction hypothesis, and it remains to show that
\begin{align}
\lim_{N\to\infty} 
        &\Prob\left[ T_{ 1 : k }^{(N)} \leq t_{ 1 : k }, T_{k+1}^{(N)} > t_{k+1} \right] = \Prob[ T_{ 1 : k }\leq t_{ 1 : k }, T_{k+1} > t_{k+1} ]. \label{eq:induction}
\end{align}
As shown in \citet[p.\ 459]{mohle1999},
\begin{equation*}
\Prob[ T_{ 1 : k }\leq t_{ 1 : k }, T_{k+1} > t_{k+1} ]
= \alpha_n^k e^{-\alpha_n t} 
        \sum_{\substack{i_1\leq \dots\leq i_{k-1}\\ \in \{0,\dots,k\} :
        \\ i_j \geq j \forall j}} 
        \prod_{j=1}^k \frac{(t_j - t_{j-1})^{i_j - i_{j-1}}}{(i_j - i_{j-1})! } ,
\end{equation*}
while the probability on the left-hand side of \eqref{eq:induction} can be written
\begin{align*}
\Prob\left[ T_{ 1 : k }^{(N)} \leq t_{ 1 : k }, T_{k+1}^{(N)} > t_{k+1} \right] &= \E\left[ \Prob[ T_{ 1 : k }^{(N)} \leq t_{ 1 : k }, T_{k+1}^{(N)} > t_{k+1} 
        \mid \mathcal{F}_\infty] \right] \\
&= \E \Bigg[ \sum_{\substack{r_1 <\dots< r_k :
        \\ r_i \leq \tau_N(t_i) \forall i}}
        \Big( \prod_{i=1}^k p_{r_i} \Big)
        \Bigg( \prod_{\substack{r=1 \\ \notin \{r_1,\dots,r_k\} }}^{\tau_N(t)} 
        (1-p_r) \Bigg) \Bigg] .
\end{align*}
That is, there are jumps at some times $r_{ 1 : k }$, and identity transitions at all other times.
A similar expression is derived in \citet{mohle1999}, but here we have an additional expectation because the probabilities $p_r$ depend on the random offspring counts.
Lemmata \ref{thm:inductionUB} and \ref{thm:inductionLB} show that this probability converges to the correct limit.
This completes the induction.
\end{proof}

\begin{landscape}
\begin{figure}[ht]
\centering 
\resizebox{\columnwidth}{!}{
\begin{tikzpicture}[>=stealth, thick]
\draw[white] (-15,-7)--(6,-7);

\node[align=center,draw=gray] at (4,0) (c7r2) 
        {Theorem~\ref{thm:weakconv}\\[-3pt] \textcolor{gray}{\textsc{weak conv.}}};
\node[align=center] at (-6,-3) (c4r3) {Lemma~\ref{thm:inductionLB}\\[-3pt] 
        \textcolor{gray}{\textsc{induction}} \\[-5pt] \textcolor{gray}{\textsc{l.b.}} };
\node[align=center] at (-6,0) (c4r2) {Lemma~\ref{thm:inductionUB}\\[-3pt] 
        \textcolor{gray}{\textsc{induction}} \\[-5pt] \textcolor{gray}{\textsc{u.b.}} };
\node[align=center] at (-6,3) (c4r1) {Lemma~\ref{thm:basis}\\[-3pt] 
        \textcolor{gray}{\textsc{basis}} };
\node[align=center] at (4,3) (c7r1) {Brown et al. 2021\\[-3pt] Theorem 3.2\\[-3pt] \textcolor{gray}{\textsc{f.d.d. conv.}} };
\node at (-15,-2.25) (c1r1) {Lemma~\ref{thm:kjjslemma2}};
\node at (4,-3) (c7r3) {Lemma~\ref{thm:maximum_pr}};
\node at (0,0) (c6r1) {Lemma~\ref{thm:holdingtimes_distn}};
\node[align=center] at (-3,6) (c5r1) {Proposition~\ref{thm:pDelta_LB}\\[-3pt] 
        \textcolor{gray}{\textsc{trans. prob.}} \\[-5pt] \textcolor{gray}{\textsc{u.b.}} };
\node[align=center] at (-3,-6) (c5r2) {Proposition~\ref{thm:pDelta_UB}\\[-3pt] 
        \textcolor{gray}{\textsc{trans. prob.}} \\[-5pt] \textcolor{gray}{\textsc{l.b.}} };
\node[align=center] at (0,-6) (c6r2) {Lemma~\ref{thm:DCT_Fubini}\\[-3pt] 
        \textcolor{gray}{\textsc{d.c.t. cond.}} };
\node at (-9,-6) (c3r1) {Lemma~\ref{thm:induction_sumprodcN}};
\node at (-12,6) (c2r1) {Lemma~\ref{thm:sumprod1}\ref{thm:sumprod1_a}};
\node at (-12,4.5) (c2r2) {Lemma~\ref{thm:sumprod1}\ref{thm:sumprod1_b}};
\node at (-12,3) (c2r3) {Lemma~\ref{thm:sumprod2}};
\node at (-12,1.5) (c2r4) {Lemma~\ref{thm:sumprod3}};
\node at (-12,-1.5) (c2r6) {Lemma~\ref{thm:indicators_tau}};
\node at (-12,0) (c2r5) {Lemma~\ref{thm:indicators_cN}};
\node at (-12,-3) (c2r7) {Lemma~\ref{thm:lim_AandB}};
\node at (-12,-4.5) (c2r8) {Lemma~\ref{thm:indicators_DN}};
\node at (-12,-6) (c2r9) {Lemma~\ref{thm:indicators_c2}};

\draw[decorate, decoration={brace, amplitude=8}] (-10.9,0.2)--(-10.9, -3.2);

\draw[->] (c1r1)--(c2r8);
\draw[->] (c1r1)--(c2r5);
\draw[->] (c2r5)--(c2r6);
\draw[->] (c2r1)--(c4r2);
\draw[->] (c2r2)--(c4r1);
\draw[->, dotted] (c2r2)--(c3r1);
\draw[->] (c2r3)--(c4r1);
\draw[->, dotted] (c2r3)--(c4r2);
\draw[->] (c2r4)--(c4r1);
\draw[->, dotted] (c2r4)--(c4r3);
\draw[->] (c2r8)--(c4r3);
\draw[->] (c2r9)--(c3r1);
\draw[->] (-10.6, -1.5)--(c4r1);
\draw[->] (-10.6, -1.5)--(c4r2);
\draw[->] (-10.6, -1.5)--(c4r3);
\draw[->] (-10.6, -1.5)--(c3r1);
\draw[->] (c3r1)--(c4r3);
\draw[->] (c4r1)--(c6r1);
\draw[->] (c4r2)--(c6r1);
\draw[->] (c4r3)--(c6r1);
\draw[->] (c5r1)--(c4r1);
\draw[->] (c5r1)--(c4r2);
\draw[->] (c5r2)--(c4r1);
\draw[->] (c5r2)--(c4r2);
\draw[->] (c5r2)--(c4r3);
\draw[->] (c6r1)--(c7r2);
\draw[->] (c6r2)--(c4r2);
\draw[->] (c6r2)--(c4r3);
\draw[->] (c7r1)--(c7r2);
\draw[->] (c7r3)--(c7r2);
\end{tikzpicture}
}
\caption[Structure of weak convergence proof]{Graph showing dependencies between the lemmata used to prove weak convergence. Dotted arrows indicate dependence via a slight modification of the preceding lemma.}
\label{fig:weakconv_structure}
\end{figure}
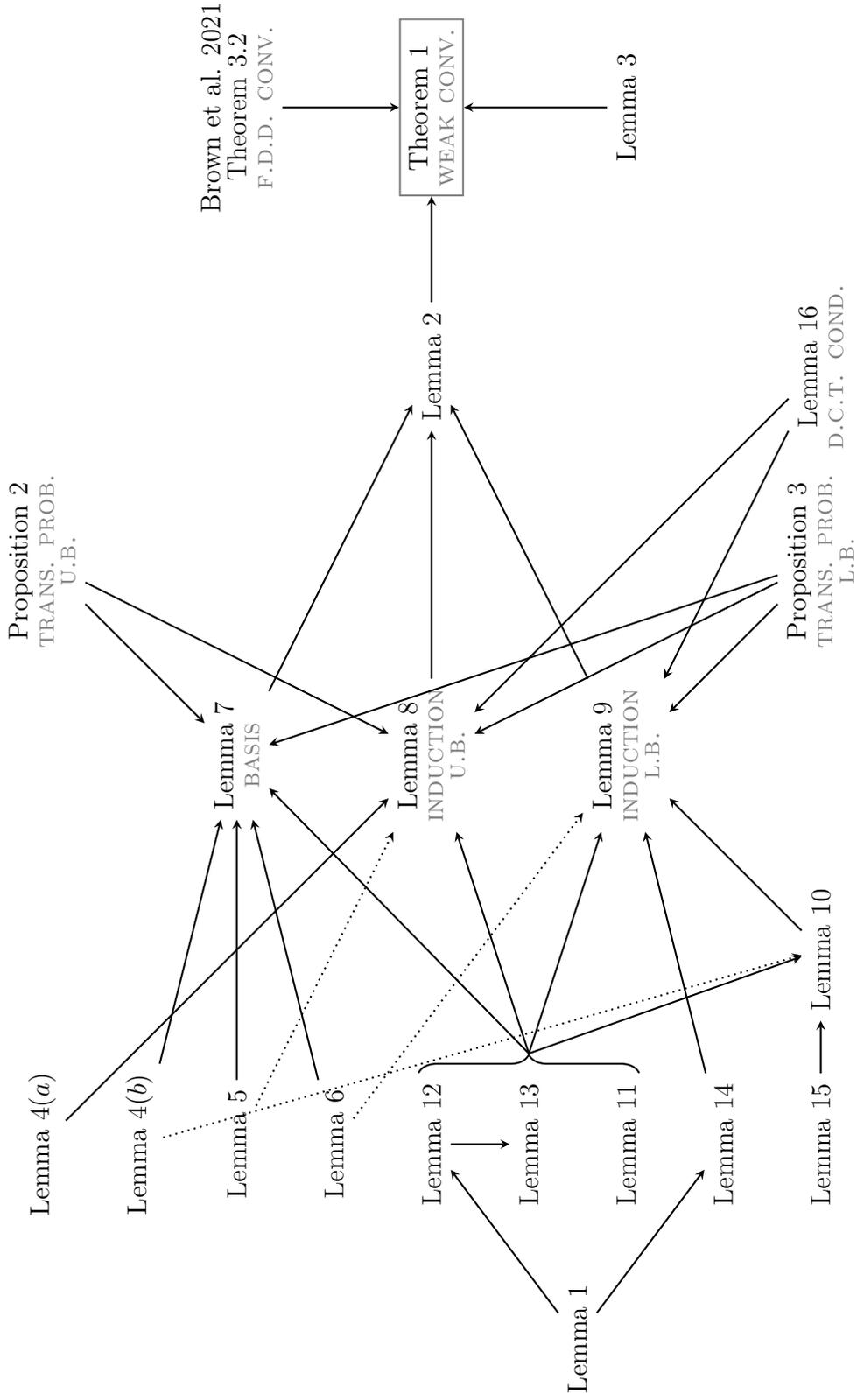
\end{landscape}

\section{Technical results}\label{sec:technical_results}

\begin{lemma}\label{thm:maximum_pr}
$\max_{\xi\in \mathcal{P}_n} \{1 - p_{\xi\xi}(t)\} = 1 - p_{\Delta\Delta}(t)$.
\end{lemma}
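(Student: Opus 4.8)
The plan is to reduce the statement to the inequality $p_{\xi\xi}(t)\ge p_{\Delta\Delta}(t)$ for every $\xi\in\mathcal P_n$, and to prove it by showing the stronger fact that $p_{\xi\xi}(t)$ depends on $\xi$ only through the number of blocks $k:=|\xi|$ and is non-increasing in $k$. Since $\Delta$ has the maximal block count $|\Delta|=n\ge|\xi|$, this makes $\xi=\Delta$ the minimiser of $\xi\mapsto p_{\xi\xi}(t)$, which is exactly the claim. The first half is immediate from \eqref{eq:defn_pxieta}: taking $\eta=\xi$ forces $b_1=\dots=b_{|\xi|}=1$, so $(\nu_t^{(i)})_{b_i}=\nu_t^{(i)}$ and
\begin{equation*}
p_{\xi\xi}(t)=P_k(t):=\frac{1}{(N)_k}\sum_{\substack{i_1,\dots,i_k=1\\ \text{all distinct}}}^N \nu_t^{(i_1)}\cdots\nu_t^{(i_k)},\qquad k=|\xi| .
\end{equation*}

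To obtain monotonicity in $k$, I would first record the probabilistic meaning of $P_k(t)$ under \ref{standing_assumption}. A short multinomial count of the parent assignments compatible with $\nu_t^{(1:N)}$ gives, for any fixed distinct $j_1,\dots,j_k$ and distinct $i_1,\dots,i_k$ in $[N]$, that $\Prob[a_t^{(j_1)}=i_1,\dots,a_t^{(j_k)}=i_k\mid\nu_t^{(1:N)}]=\nu_t^{(i_1)}\cdots\nu_t^{(i_k)}/(N)_k$; summing over ordered tuples of distinct indices identifies $P_k(t)$ as the conditional probability, given $\nu_t^{(1:N)}$, that $k$ prescribed particles in generation $t-1$ have pairwise distinct parents in generation $t$ — equivalently, the no-merger probability already encoded by \eqref{eq:defn_pxieta}. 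The key point is that this value does not depend on which $k$ particles are prescribed.

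The monotonicity then drops out of a containment of events: for $k<n$, if $k+1$ prescribed particles have pairwise distinct parents then, in particular, so does any fixed $k$-subset of them, whence $P_{k+1}(t)\le P_k(t)$. Chaining these inequalities yields $p_{\xi\xi}(t)=P_{|\xi|}(t)\ge P_n(t)=p_{\Delta\Delta}(t)$ for every $\xi$, with equality when $|\xi|=n$, i.e.\ $\xi=\Delta$, which is the lemma. As a self-contained alternative to the probabilistic route, one can write $P_k(t)=k!\,e_k(\nu_t^{(1:N)})/(N)_k$ with $e_k$ the degree-$k$ elementary symmetric polynomial, and deduce $P_{k+1}(t)\le P_k(t)$ from $(N-k)\,e_k\ge(k+1)\,e_{k+1}$, which follows from the identity $(k+1)e_{k+1}(\nu_t^{(1:N)})=\sum_{i=1}^N\nu_t^{(i)}\,e_k\!\big(\nu_t^{(1:N)}\setminus\{\nu_t^{(i)}\}\big)$ (the tuple with the $i$-th coordinate deleted), together with $\sum_i\nu_t^{(i)}=N$ and $(\nu_t^{(i)})^2\ge\nu_t^{(i)}$, the last holding because the offspring counts are nonnegative integers.

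I do not expect a genuine obstacle here; the only points requiring a little care are the two bookkeeping steps — checking that $\eta=\xi$ collapses \eqref{eq:defn_pxieta} to $P_{|\xi|}(t)$, and the multinomial count (or, in the algebraic variant, the symmetric-function identity and the integrality step). The substance of the lemma is simply the observation that enlarging the set of lineages can only increase the chance that two of them coalesce in a single generation, so the identity transition probability is smallest for the finest partition $\Delta$.
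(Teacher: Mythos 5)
Your proposal is correct, and its skeleton coincides with the paper's: both reduce the lemma to the observation that $p_{\xi\xi}(t)$ depends on $\xi$ only through $k=|\xi|$ (immediate from \eqref{eq:defn_pxieta} with $b_1=\dots=b_k=1$) and is non-increasing in $k$, so that $\Delta$, having the most blocks, minimises $p_{\xi\xi}(t)$. Where you diverge is in the proof of the monotonicity step. The paper argues purely algebraically on the sum-product: it writes $p_{\xi'\xi'}(t)$ with $k+1$ indices, factors out the inner sum over $i_{k+1}$, discards the summands in which some $\nu_t^{(i_l)}=0$, and then bounds $\sum_{i_{k+1}\notin\{i_1,\dots,i_k\}}\nu_t^{(i_{k+1})}=N-\sum_{l}\nu_t^{(i_l)}\le N-k$ using the fact that the surviving $\nu_t^{(i_l)}$ are each at least $1$; this cancels the extra $(N-k)$ in the denominator. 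Your algebraic variant via $(N-k)e_k\ge(k+1)e_{k+1}$ is essentially this same computation in symmetric-function notation (I checked the chain: the recursion for $(k+1)e_{k+1}$, the substitution $e_k(\nu)=e_k(\nu\setminus\{\nu^{(i)}\})+\nu^{(i)}e_{k-1}(\nu\setminus\{\nu^{(i)}\})$, $\sum_i\nu^{(i)}=N$, and $(\nu^{(i)})^2\ge\nu^{(i)}$ do close the argument). Your primary, probabilistic route is the genuinely different one: identifying $P_k(t)$ under \ref{standing_assumption} as the conditional probability that $k$ prescribed offspring have pairwise distinct parents, whence $P_{k+1}(t)\le P_k(t)$ by event containment. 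This is cleaner and makes the "more lineages can only increase the chance of a collision" intuition into the proof itself, at the cost of the multinomial count needed to justify the identification (the ratio of $(N-k)!/\bigl(\prod_l(\nu_t^{(i_l)}-1)!\prod_{i\notin\{i_l\}}\nu_t^{(i)}!\bigr)$ to $N!/\prod_i\nu_t^{(i)}!$ does equal $\nu_t^{(i_1)}\cdots\nu_t^{(i_k)}/(N)_k$, so that step is sound); the paper's route avoids any appeal to the probabilistic interpretation and works directly with the formula. Either argument is complete and correct.
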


\begin{proof}
Consider any $\xi \in E$ consisting of $k$ blocks ($1\leq k\leq n-1$), and any $\xi^\prime\in E$ consisting of $k+1$ blocks. 
Setting $\eta=\xi$ in \eqref{eq:defn_pxieta},
\begin{equation*}
p_{\xi\xi}(t) 
= \frac{1}{(N)_k} \sum_{\substack{i_1,\dots,i_k=1 \\ \text{all distinct}}}^N 
        \nu_t^{(i_1)} \cdots \nu_t^{(i_k)} .
\end{equation*}
Similarly,
\begin{align*}
p_{\xi^\prime\xi^\prime}(t) &= \frac{1}{(N)_{k+1}} 
        \sum_{\substack{i_1,\dots, i_{k+1} =1 \\ \text{all distinct}}}^N 
        \nu_t^{(i_1)} \cdots \nu_t^{(i_k)} \nu_t^{(i_{k+1})} \\
&= \frac{1}{(N)_k(N-k)} \sum_{\substack{i_1,\dots,i_k =1 \\ \text{all distinct}}}^N 
        \Bigg\{ \nu_t^{(i_1)} \cdots \nu_t^{(i_k)} 
        \sum_{\substack{i_{k+1}=1 \\ \notin \{i_1,\ldots, i_k\} }}^N 
        \nu_t^{(i_{k+1})} \Bigg\} .
\end{align*}
Discarding the zero summands,
\begin{equation*}
p_{\xi^\prime\xi^\prime}(t) 
    = \frac{1}{(N)_k(N-k)} \sum_{\substack{i_1,\dots,i_k =1 \\ \text{all distinct:} 
        \\ \nu_t^{(i_1)},\dots,\nu_t^{(i_k)} > 0 }}^N
        \Bigg\{ \nu_t^{(i_1)} \cdots \nu_t^{(i_k)} 
        \sum_{\substack{i_{k+1}=1 \\ \notin \{i_1,\ldots, i_k\} }}^N 
        \nu_t^{(i_{k+1})} \Bigg\} .
\end{equation*}
The inner sum is
\begin{equation*}
\sum_{\substack{i_{k+1}=1 \\ \notin \{i_1,\ldots, i_k\} }}^N \nu_t^{(i_{k+1})} 
    = \Bigg\{ \sum_{i=1}^N \nu_t^{(i)} -  \sum_{i\in\{i_1,\dots,i_k\} } 
        \nu_t^{(i)} \Bigg\}
    \leq N - k ,
\end{equation*}
since $\nu_t^{(i_1)},\dots,\nu_t^{(i_k)} $ are all at least 1.
Hence
\begin{equation*}
p_{\xi^\prime\xi^\prime}(t)
    \leq  \frac{N-k}{(N)_k(N-k)} \sum_{\substack{i_1,\dots,i_k =1 
        \\ \text{all distinct:} \\ \nu_t^{(i_1)},\dots,\nu_t^{(i_k)} > 0 }}^N 
        \nu_t^{(i_1)} \cdots \nu_t^{(i_k)} 
    = p_{\xi\xi}(t) .
\end{equation*}
Thus, $p_{\xi\xi}(t)$ is decreasing in the number of blocks of $\xi$, and is therefore minimised by taking $\xi = \Delta$, which uniquely achieves the maximum $n$ blocks. This choice in turn maximises $1-p_{\xi\xi}(t)$, as required.
\end{proof}

\subsection{Bounds on sum-products}\label{sec:bounds_on_sum-products}

In this section we derive tractable bounds on sums of products of conditional merger probabilities, which themselves appear as upper and lower bounding envelopes of the conditional transition probability $p_r$ in Propositions \ref{thm:pDelta_LB} and \ref{thm:pDelta_UB}.
These sums of products can be regarded as building blocks of the conditional transition probabilities of the genealogical process, and the bounds obtained here facilitate proving its convergence. 
The sum-product bounds will be applied multiple times in the lemmata of this section.

\begin{lemma}\label{thm:sumprod1}
Fix $t > s > 0$ and $l\in\mathbb{N}$.
Then
\begin{enumerate}[label=$(\alph*)$]
\item \label{thm:sumprod1_a} 
$\begin{aligned}[t]
\sum_{\substack{ s_1, \dots, s_l =\tau_N(s)+1 \\ \text{\normalfont{all distinct}} }}^{\tau_N(t)}
        \prod_{j=1}^l c_N(s_j)
    \leq (t - s + 1)^l \leq (t+1)^l,
\end{aligned}$
\item \label{thm:sumprod1_b} 
$\begin{aligned}[t]
&\Bigg[ (t - s)^l - \Bigg( c_N(\tau_N(s)) + \binom{l}{2} \sum_{r = \tau_N(s) + 1}^{\tau_N(t)} c_N(r)^2 \Bigg) (t + 1)^l \Bigg]  \1{\{ c_N(\tau_N(s)) \leq t - s\}} \\
&\leq \sum_{\substack{ s_1, \dots, s_l =\tau_N(s) + 1 \\ \text{\normalfont{all distinct}} }}^{\tau_N(t)} \prod_{j=1}^l c_N(s_j) \leq (t - s)^l + c_N(\tau_N(t)) (t+1)^l.
\end{aligned}$
\end{enumerate}
\end{lemma}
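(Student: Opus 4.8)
The plan is to relate the sum-product over distinct indices to a suitable power of a single sum, using Proposition~\ref{thm:cN_properties}\ref{item:cN_property4} as the basic estimate on $\sum_{r=\tau_N(s)+1}^{\tau_N(t)} c_N(r)$. Write $\Sigma := \sum_{r = \tau_N(s)+1}^{\tau_N(t)} c_N(r)$ for brevity. The guiding identity is that the sum over \emph{all} (not necessarily distinct) tuples factors as $\Sigma^l$, so the sum over distinct tuples differs from $\Sigma^l$ by the contribution of tuples with at least one repeated index. Part~\ref{thm:sumprod1_a} will come from dropping the distinctness constraint entirely: the distinct-index sum is bounded above by $\Sigma^l$, and by Proposition~\ref{thm:cN_properties}\ref{item:cN_property4} (case $s' = s > 0$) we have $\Sigma \le t - (s+1)\1{(0,\infty)}(s) \cdots$ — more precisely $\Sigma \leq t+1$ always, and a sharper analysis using that same proposition applied carefully gives $\Sigma \le t - s + 1$. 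Then $\Sigma^l \le (t-s+1)^l \le (t+1)^l$, which is exactly the claimed chain.

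**The lower bound in (b): the harder direction.**

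For part~\ref{thm:sumprod1_b}, the upper bound $\sum_{\text{distinct}} \prod c_N(s_j) \le (t-s)^l + c_N(\tau_N(t))(t+1)^l$ should follow from $\Sigma^l$ together with the lower bound $\Sigma \ge t - s - c_N(\tau_N(s))$ from Proposition~\ref{thm:cN_properties}\ref{item:cN_property4}; writing $\Sigma = (t-s) - \theta$ with $0 \le \theta \le c_N(\tau_N(s)) + (\text{boundary correction at } \tau_N(t))$, expand $\Sigma^l = ((t-s) - \theta + \text{correction})^l$ and bound the lower-order terms crudely by $(t+1)^l$ times the relevant $c_N$ factor — here $c_N(\tau_N(t))$ enters because $\tau_N(t)$ may overshoot, contributing one extra term $c_N(\tau_N(t))$ to $\Sigma$. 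For the lower bound, I would write the distinct-index sum as $\Sigma^l$ minus the sum over tuples with a repeat. Each repeated-index tuple, by inclusion–exclusion at first order, is controlled by $\binom{l}{2}$ (choice of which pair coincides) times $\big(\sum_r c_N(r)^2\big) \cdot \Sigma^{l-2}$, because fixing two indices equal collapses two factors $c_N(s_i)c_N(s_j)$ into $c_N(r)^2$ and leaves $l-2$ free indices summing to at most $\Sigma^{l-2} \le (t+1)^{l-2}$. Combining, $\sum_{\text{distinct}} \prod c_N(s_j) \ge \Sigma^l - \binom{l}{2}\big(\sum_r c_N(r)^2\big)(t+1)^{l-2}$, and then $\Sigma^l \ge (t-s-c_N(\tau_N(s)))^l \ge (t-s)^l - l c_N(\tau_N(s))(t+1)^{l-1} \ge (t-s)^l - c_N(\tau_N(s))(t+1)^l$ whenever $c_N(\tau_N(s)) \le t-s$ so that the base is nonnegative (this is what the indicator $\1{\{c_N(\tau_N(s)) \le t-s\}}$ enforces — it makes the whole bracket trivially $\le$ the sum, which is nonnegative, when the indicator is $0$).

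**Bookkeeping and the main obstacle.**

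Assembling these pieces: the bracketed lower bound in~\ref{thm:sumprod1_b} collects the $-c_N(\tau_N(s))(t+1)^l$ term and the $-\binom{l}{2}\big(\sum_r c_N(r)^2\big)(t+1)^l$ term (using $(t+1)^{l-2} \le (t+1)^l$), all multiplied by the indicator. The main obstacle I anticipate is \emph{the inclusion–exclusion accounting for higher-order coincidences}: a tuple may have several repeated indices or a triple, and one must check that bounding everything by the single first-order term $\binom{l}{2}\big(\sum_r c_N(r)^2\big)(t+1)^{l-2}$ genuinely dominates all of these — this works because $c_N(r)^2 \le c_N(r)$ and the combinatorial prefactors for $\ge 3$ coincidences are absorbed, but making that rigorous without a messy case split requires care (one clean route: bound the full repeated-index contribution by $\sum_{i<j}\sum_{r}c_N(r)^2 \prod_{k \ne i,j}\Sigma$, treating each pair that coincides, which over-counts and hence is a valid upper bound on the defect). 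A secondary subtlety is the off-by-one in the index range and the overshoot of $\tau_N$, which is precisely why the $c_N(\tau_N(t))$ term appears on the right and the boundary term $c_N(\tau_N(s))$ on the left; careful use of Proposition~\ref{thm:cN_properties}\ref{item:cN_property4} in both directions handles this.
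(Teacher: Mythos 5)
Your proposal is correct and follows essentially the same route as the paper: part (a) by dropping the distinctness constraint and bounding $\sum_{r=\tau_N(s)+1}^{\tau_N(t)} c_N(r)$ via Proposition~\ref{thm:cN_properties}, and part (b) by subtracting a union bound over coinciding pairs (the paper cites this as \citealp[Supplement, equation (8)]{koskela2022erratum}, which is exactly the $\binom{l}{2}\big(\sum_r c_N(r)^2\big)\Sigma^{l-2}$ defect bound you describe) and then expanding $(t-s\mp c_N(\cdot))^l$ with crude $(t+1)^l$ absorption of the lower-order terms. The "main obstacle" you flag is already resolved by the clean route you give at the end, and your use of the indicator to handle a possibly negative base matches the paper's reasoning.
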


\begin{proof} \textbf{\ref{thm:sumprod1_a}}
This follows from the inequalities
\begin{equation*}
\sum_{\substack{ s_1, \dots, s_l =\tau_N(s) + 1 \\ \text{all distinct} }}^{\tau_N(t)}
        \prod_{j=1}^l c_N(s_j)
\leq \Bigg( \sum_{r=\tau_N(s) + 1}^{\tau_N(t)} c_N(r) \Bigg)^l \leq (t - s + 1)^l,
\end{equation*}
the first of which follows from a multinomial expansion of the middle term and the second from Proposition~\ref{thm:cN_properties}\ref{item:cN_property4}. 

\textbf{\ref{thm:sumprod1_b}}
We begin by multiplying the bound in \citet[Supplement, equation (8)]{koskela2022erratum} by $\1{\{ c_N(s) \leq t - s\}}$, which is valid because the left-hand side is non-negative, and yields  
\begin{align*}
&\sum_{\substack{ s_1, \dots, s_l =\tau_N(s) + 1 \\ \text{all distinct} }}^{\tau_N(t)} 
        \prod_{j=1}^l c_N(s_j) \\
&\geq \Bigg[ \Bigg( \sum_{r=\tau_N(s) + 1}^{\tau_N(t)} c_N(r) \Bigg)^l \\
&\phantom{\geq \Bigg[} - \Bigg( \sum_{r=\tau_N(s) + 1}^{\tau_N(t)} c_N(r)^2 \Bigg)
        \binom{l}{2} \Bigg( \sum_{r=\tau_N(s) + 1}^{\tau_N(t)} c_N(r) \Bigg)^{l-2} \Bigg] \1{\{ c_N(\tau_N(s)) \leq t - s\}} \\
&\geq \Bigg[[t - s - c_N(\tau_N(s))]^l - \Bigg( \sum_{r=\tau_N(s) + 1}^{\tau_N(t)} c_N(r)^2 \Bigg)
        \binom{l}{2} (t + 1)^{l-2}\Bigg]\1{\{c_N(\tau_N(s)) \leq t - s\}},
\end{align*}
where the final inequality follows from the definition of $\tau_N$ and Lemma \ref{thm:sumprod1}\ref{thm:sumprod1_a}, and tracking the event $\{c_N(s) \leq t - s\}$ is necessary in case of a large negative value of the lower bound $t - s - c_N(\tau_N(s))$ when $l$ is even.
A binomial expansion of the first term on the right-hand side, followed by using Proposition \ref{thm:cN_properties}\ref{item:cN_property1}  results in
\begin{align*}
&\sum_{\substack{ s_1, \dots, s_l =\tau_N(s) + 1 \\ \text{all distinct} }}^{\tau_N(t)} 
        \prod_{j=1}^l c_N(s_j) \\
&\geq \Bigg[(t - s)^l - \sum_{i = 0}^{l - 1} \binom{l}{i} c_N(\tau_N(s))^{l - i} (t - s)^i \\
&\phantom{\geq \Bigg[ (t - s)^l} - \Bigg( \sum_{r=\tau_N(s) + 1}^{\tau_N(t)} c_N(r)^2 \Bigg) \binom{l}{2} (t + 1)^{l-2}\Bigg]\1{\{c_N(\tau_N(s)) \leq t - s\}} \\
&\geq \Bigg[ (t - s)^l - \Bigg( c_N(\tau_N(s)) + \binom{l}{2} \sum_{r = \tau_N(s) + 1}^{\tau_N(t)} c_N(r)^2 \Bigg) (t + 1)^l \Bigg]\1{\{c_N(\tau_N(s)) \leq t - s\}}.
\end{align*}

For the upper bound we have
\begin{align*}
\sum_{\substack{ s_1, \dots, s_l =\tau_N(s) + 1 \\ \text{all distinct} }}^{\tau_N(t)} 
        \prod_{j=1}^l c_N(s_j)
&\leq \Bigg( \sum_{r=\tau_N(s)+1}^{\tau_N(t)} c_N(s) \Bigg)^l \leq \left[ t - s + c_N(\tau_N(t)) \right]^l ,
\end{align*}
using the definition of $\tau_N$.
A binomial expansion and Proposition \ref{thm:cN_properties}\ref{item:cN_property1} yield
\begin{align*}
[ t - s + c_N(\tau_N(t)) ]^l &= (t - s)^l + \sum_{i=0}^{l-1} \binom{l}{i} (t - s)^i c_N(\tau_N(t))^{l-i}\\
&\leq (t - s)^l + c_N(\tau_N(t)) \sum_{i=0}^{l-1} \binom{l}{i} t^i \\
&\leq (t - s)^l + c_N(\tau_N(t)) (t+1)^l.
\end{align*}
\end{proof}
For later uses of Lemma \ref{thm:sumprod1} when $s = 0$, we emphasize that $c_N(\tau_N(0)) = 0$.

\begin{lemma}\label{thm:sumprod2}
Fix $t>0$, $l\in\mathbb{N}$.
Then, for any constant $B>0$,
\begin{align*}
\sum_{\substack{ s_1, \dots, s_l =1 \\ \text{\normalfont{all distinct}} }}^{\tau_N(t)} &
        \prod_{j=1}^l \left[ c_N(s_j) + B D_N(s_j) \right] \\
&\leq \sum_{\substack{ s_1, \dots, s_l =1 \\ \text{\normalfont{all distinct}} }}^{\tau_N(t)} 
        \prod_{j=1}^l c_N(s_j)
        + \Bigg( \sum_{s=1}^{\tau_N(t)} D_N(s) \Bigg) (t+1)^{l-1} (1+B)^l .
\end{align*}
\end{lemma}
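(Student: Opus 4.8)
The plan is to expand the product on the left-hand side by the distributive law, peel off the unique term containing no factor of $D_N$ (which is exactly the first term on the right-hand side), and crudely bound the sum of all remaining terms. Writing $[l] := \{1,\dots,l\}$,
\[
\prod_{j=1}^l \left[ c_N(s_j) + B D_N(s_j) \right]
= \sum_{S \subseteq [l]} B^{|S|} \prod_{j \in S} D_N(s_j) \prod_{j \notin S} c_N(s_j),
\]
and after summing over distinct $s_1,\dots,s_l$ the $S = \emptyset$ summand reproduces $\sum_{\text{distinct}} \prod_{j=1}^l c_N(s_j)$ verbatim. So it remains to control the sums associated with the nonempty subsets $S$.

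For a fixed nonempty $S$, I would choose any distinguished index $j^\ast \in S$. Since $0 \leq D_N(s) \leq c_N(s) \leq 1$ for every $s$ by Proposition~\ref{thm:cN_properties}\ref{item:cN_property1}, and every summand is nonnegative, I can replace each factor $D_N(s_j)$ with $j \in S \setminus \{ j^\ast \}$ by $c_N(s_j)$ and then discard the distinctness restriction, obtaining
\[
\sum_{\substack{ s_1, \dots, s_l =1 \\ \text{all distinct} }}^{\tau_N(t)} B^{|S|} \prod_{j \in S} D_N(s_j) \prod_{j \notin S} c_N(s_j)
\leq B^{|S|} \Bigg( \sum_{s=1}^{\tau_N(t)} D_N(s) \Bigg) \Bigg( \sum_{s=1}^{\tau_N(t)} c_N(s) \Bigg)^{l-1}.
\]
Applying Proposition~\ref{thm:cN_properties}\ref{item:cN_property4} with $s^\prime = 0$ gives $\sum_{s=1}^{\tau_N(t)} c_N(s) \leq t + 1$, so this right-hand side is at most $B^{|S|} \bigl( \sum_{s=1}^{\tau_N(t)} D_N(s) \bigr) (t+1)^{l-1}$.

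Finally, there are $\binom{l}{m}$ subsets $S$ with $|S| = m$, so summing over all nonempty $S$ and using $\sum_{m=1}^l \binom{l}{m} B^m = (1+B)^l - 1 \leq (1+B)^l$ bounds the total contribution of the nonempty subsets by $\bigl( \sum_{s=1}^{\tau_N(t)} D_N(s) \bigr)(t+1)^{l-1}(1+B)^l$, which is precisely the error term in the statement. I do not expect any real obstacle here: the argument is bookkeeping over the subsets of $[l]$, and the only inputs are the elementary facts $D_N \leq c_N \leq 1$ and $\sum_{r=1}^{\tau_N(t)} c_N(r) \leq t+1$ recorded in Proposition~\ref{thm:cN_properties}. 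The only point requiring slight care is to invoke nonnegativity of every summand consistently — both when upgrading $D_N$ to $c_N$ inside a product and when enlarging the index set by dropping distinctness — so that each manipulation is genuinely an upper bound in the intended direction.
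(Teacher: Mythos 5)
Your proposal is correct and follows essentially the same route as the paper: the same binomial expansion over subsets, separation of the pure-$c_N$ term, replacement of all but one $D_N$ factor by $c_N$ via $D_N \leq c_N$, the bound $\sum_{r=1}^{\tau_N(t)} c_N(r) \leq t+1$, and the Binomial Theorem to collect $(1+B)^l$. The only cosmetic difference is that you invoke Proposition~\ref{thm:cN_properties}\ref{item:cN_property4} directly where the paper cites Lemma~\ref{thm:sumprod1}\ref{thm:sumprod1_a}, which is itself proved from that proposition.
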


\begin{proof}
We start with the binomial expansion
\begin{align}
&\sum_{\substack{ s_1, \dots, s_l =1 \\ \text{all distinct} }}^{\tau_N(t)} 
        \prod_{j=1}^l \left[ c_N(s_j) + B D_N(s_j) \right] \notag \\
&= \sum_{\mathcal{I} \subseteq [l]} B^{l-|\mathcal{I}|}
        \sum_{\substack{ s_1, \dots, s_l =1 \\ \text{all distinct} }}^{\tau_N(t)}
        \Bigg( \prod_{i\in\mathcal{I}} c_N(s_i) \Bigg)
        \Bigg( \prod_{j\notin\mathcal{I}} D_N(s_j) \Bigg). \label{eq:010}
\end{align}
Since we are summing over all permutations of $s_{ 1 : l }$, the inner sum depends on $\mathcal{I}$ only through $I:= |\mathcal{I}|$. We may therefore replace the sum over $\mathcal{I} \subseteq [l]$ with a sum over the size $I$ of the subset and a binomial coefficient counting the number of terms in which the subset is of size $I$:
\begin{align}
&\sum_{\mathcal{I} \subseteq [l]} B^{l-|\mathcal{I}|}
        \sum_{\substack{ s_1, \dots, s_l =1 \\ \text{all distinct} }}^{\tau_N(t)}
        \Bigg( \prod_{i\in\mathcal{I}} c_N(s_i) \Bigg)
        \Bigg( \prod_{j\notin\mathcal{I}} D_N(s_j) \Bigg) \notag\\
&= \sum_{\substack{ s_1, \dots, s_l =1 \\ \text{all distinct} }}^{\tau_N(t)}
        \prod_{j=1}^l c_N(s_j) + \sum_{I=0}^{l-1} \binom{l}{I} B^{l-I} 
        \sum_{\substack{ s_1, \dots, s_l =1 \\ \text{all distinct} }}^{\tau_N(t)}
        \Bigg( \prod_{i=1}^I c_N(s_i) \Bigg) \Bigg( \prod_{j=I+1}^l D_N(s_j) \Bigg), \label{eq:012}
\end{align}
where we have also separated out the $I = l$ term.
There is always at least one $D_N$ factor in the second term on the right-hand side, so using Proposition~\ref{thm:cN_properties}\ref{item:cN_property1}, Lemma \ref{thm:sumprod1}\ref{thm:sumprod1_a}, and the Binomial Theorem, we can write
\begin{align}
\sum_{I=0}^{l-1}\binom{l}{I} B^{l-I} 
        &\sum_{\substack{ s_1, \dots, s_l =1 \\ \text{all distinct} }}^{\tau_N(t)}
        \Bigg( \prod_{i=1}^I c_N(s_i) \Bigg) \Bigg( \prod_{j=I+1}^l D_N(s_j) \Bigg) \notag\\
&\leq \sum_{I=0}^{l-1}\binom{l}{I} B^{l-I} 
        \Bigg( \sum_{\substack{ s_1, \dots, s_{l-1} =1 \\ \text{all distinct} }}^{\tau_N(t)}
        \prod_{i=1}^{l-1} c_N(s_i) \Bigg) 
        \sum_{s_l=1}^{\tau_N(t)} D_N(s_l) \notag\\
&\leq \sum_{I=0}^{l-1}\binom{l}{I} B^{l-I} (t+1)^{l-1}
        \sum_{s=1}^{\tau_N(t)} D_N(s) \notag \\
&\leq \Bigg( \sum_{s=1}^{\tau_N(t)} D_N(s) \Bigg) (t+1)^{l-1} (1+B)^l. \label{eq:013}
\end{align}
Substituting \eqref{eq:013} into \eqref{eq:012} concludes the proof.
\end{proof}

\begin{lemma}\label{thm:sumprod3}
Fix $t>0$, $l\in\mathbb{N}$.
Then, for any constant $B>0$,
\begin{align*}
&\sum_{\substack{ s_1, \dots, s_l =1 \\ \text{\normalfont{all distinct}} }}^{\tau_N(t)} 
        \prod_{j=1}^l \left[ c_N(s_j) - B D_N(s_j) \right] \\
&\geq \sum_{\substack{ s_1, \dots, s_l =1 \\ \text{\normalfont{all distinct}} }}^{\tau_N(t)} 
        \prod_{j=1}^l c_N(s_j)
        - \left( \sum_{s=1}^{\tau_N(t)} D_N(s) \right) (t+1)^{l-1} (1+B)^l .
\end{align*}
\end{lemma}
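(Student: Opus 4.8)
The plan is to mirror the proof of Lemma~\ref{thm:sumprod2} almost verbatim, replacing the additive perturbation $+BD_N$ with $-BD_N$ and tracking signs carefully. First I would expand
\begin{equation*}
\sum_{\substack{ s_1, \dots, s_l =1 \\ \text{all distinct} }}^{\tau_N(t)}
        \prod_{j=1}^l \left[ c_N(s_j) - B D_N(s_j) \right]
= \sum_{I=0}^{l} (-B)^{l-I} \binom{l}{I}
        \sum_{\substack{ s_1, \dots, s_l =1 \\ \text{all distinct} }}^{\tau_N(t)}
        \Bigg( \prod_{i=1}^I c_N(s_i) \Bigg) \Bigg( \prod_{j=I+1}^l D_N(s_j) \Bigg),
\end{equation*}
using that, as in Lemma~\ref{thm:sumprod2}, the inner sum depends on the subset $\mathcal{I}\subseteq[l]$ only through its cardinality because we are summing over all permutations of $s_{1:l}$. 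Separating the $I=l$ term (which carries coefficient $1$ and reproduces $\sum \prod_{j=1}^l c_N(s_j)$) from the rest yields
\begin{equation*}
\sum_{\substack{ s_1, \dots, s_l =1 \\ \text{all distinct} }}^{\tau_N(t)}
        \prod_{j=1}^l c_N(s_j)
+ \sum_{I=0}^{l-1} (-B)^{l-I} \binom{l}{I}
        \sum_{\substack{ s_1, \dots, s_l =1 \\ \text{all distinct} }}^{\tau_N(t)}
        \Bigg( \prod_{i=1}^I c_N(s_i) \Bigg) \Bigg( \prod_{j=I+1}^l D_N(s_j) \Bigg).
\end{equation*}

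The remaining task is to lower-bound the second sum. Since all the factors $c_N(s_i)$ and $D_N(s_j)$ are nonnegative (Proposition~\ref{thm:cN_properties}\ref{item:cN_property1}), the inner sum for each $I\in\{0,\dots,l-1\}$ is a nonnegative quantity, so the second sum is bounded below by $-1$ times the sum of its absolute values. Each term in absolute value is at most $B^{l-I}\binom{l}{I}$ times the corresponding nonnegative inner sum, and — exactly as in the chain of inequalities \eqref{eq:013} — each of those inner sums (which always contains at least one $D_N$ factor) is bounded above by $(t+1)^{l-1}\sum_{s=1}^{\tau_N(t)} D_N(s)$ via Proposition~\ref{thm:cN_properties}\ref{item:cN_property1} and Lemma~\ref{thm:sumprod1}\ref{thm:sumprod1_a}. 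Summing over $I$ and applying the Binomial Theorem, $\sum_{I=0}^{l-1}\binom{l}{I}B^{l-I}\leq (1+B)^l$, gives the stated lower bound.

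The only genuine subtlety — and the one step I would be careful to state explicitly rather than wave through — is the sign bookkeeping: because the perturbation is $-BD_N$, the cross terms alternate in sign, so rather than a clean telescoping as in the upper-bound case one bounds the whole correction in absolute value by the triangle inequality before applying the $D_N$-envelope estimates. This is routine, so I expect no real obstacle; the lemma is essentially the lower-bound companion of Lemma~\ref{thm:sumprod2} and its proof is a one-paragraph adaptation.
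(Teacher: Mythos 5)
Your proposal is correct and follows essentially the same route as the paper: a binomial expansion over subsets reduced to a sum over cardinalities by symmetry, a lower bound obtained by replacing the alternating-sign cross terms with their negated absolute values, and then the same envelope estimate as in the proof of Lemma~\ref{thm:sumprod2} (the chain culminating in \eqref{eq:013}) together with the Binomial Theorem.
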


\begin{proof}
A binomial expansion and manipulations as in \eqref{eq:010}--\eqref{eq:012} gives
\begin{align*}
&\sum_{\substack{ s_1, \dots, s_l =1 \\ \text{all distinct} }}^{\tau_N(t)} 
        \prod_{j=1}^l 
        \left[ c_N(s_j) - B D_N(s_j) \right] \\
&\geq \sum_{\substack{ s_1, \dots, s_l =1 \\ \text{all distinct} }}^{\tau_N(t)} 
        \prod_{j=1}^l c_N(s_j) - \sum_{I=0}^{l-1} \binom{l}{I} B^{l-I} 
        \sum_{\substack{ s_1, \dots, s_l =1 \\ \text{all distinct} }}^{\tau_N(t)}
        \Bigg( \prod_{i=1}^I c_N(s_i) \Bigg) \Bigg( \prod_{j=I+1}^l D_N(s_j) \Bigg),
\end{align*}
where the inequality arises because some positive terms have been multiplied by $-1$.
Then \eqref{eq:013} concludes the proof, noting that an upper bound on negative terms results in an overall lower bound.
\end{proof}

Since $l$ distinct objects can always be ordered, Lemmata \ref{thm:sumprod2} and \ref{thm:sumprod3} can also be phrased in terms of summations over ordered rather than distinct variables.
We will use whichever representation is more convenient on a case-by-case basis.

\subsection{Main components of induction argument}\label{sec:main_components_of_induction}
This section contains the technical aspects of the proof of Lemma~\ref{thm:holdingtimes_distn}, which establishes the limiting distributions of holding times of the coupled process via an induction argument.
It is split into four lemmata: the first (Lemma~\ref{thm:basis}) is used in the basis step, and the others in the induction step, which is established by combining upper and lower bounds proved in Lemmata \ref{thm:inductionUB} and \ref{thm:inductionLB}, respectively.
Lemma~\ref{thm:induction_sumprodcN} is a technical result which is common to both the upper and lower bounds, determining the limit as $N\to\infty$ of a certain expectation that arises in both cases.

The following are all consequences of \eqref{eq:mainthmcondition2}: for all $t>s>0$,
\begin{align}
\E \left[ c_N(\tau_N(t)) \right] &\rightarrow 0, \label{eq:BJJK_eq3.3}\\
\E\Bigg[ \sum_{r=\tau_N(s)+1}^{\tau_N(t)} c_N(r)^2 \Bigg] &\rightarrow 0,
        \label{eq:BJJK_eq3.5}\\
\E\Bigg[ \sum_{r=\tau_N(s)+1}^{\tau_N(t)} D_N(r) \Bigg] &\rightarrow 0,
        \label{eq:BJJK_eq3.4}
\end{align}
as $N\to\infty$. Proofs are given in \citet{brown2021} in Lemmata 3.4 (with small tweaks), 3.3 and 3.5 respectively.

\begin{lemma}[Basis step]\label{thm:basis}
Assume \eqref{eq:mainthmcondition2} holds.
Then for any $0 < t < \infty$,
\begin{equation*}
\lim_{N\to\infty} \E\Bigg[ \prod_{r=1}^{\tau_N(t)} (1-p_r) \Bigg]
= e^{-\alpha_n t},
\end{equation*}
where $\alpha_n := n(n-1)/2$.
\end{lemma}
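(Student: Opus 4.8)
The plan is to reduce the expectation of the product to the exponential $e^{-\alpha_n t}$ by sandwiching $p_r$ between the upper and lower envelopes supplied by Propositions~\ref{thm:pDelta_UB} and~\ref{thm:pDelta_LB}, applied with $\xi = \Delta$, so that $p_r$ lies between $\ON \binom{n}{2}[c_N(r) - B_n' D_N(r)]$ and $\binom{n}{2}\frac{N^{n-2}}{(N-2)_{n-2}}[c_N(r) + B_n D_N(r)]$; note the prefactor $N^{n-2}/(N-2)_{n-2} = \ON$ as well. The product $\prod_{r=1}^{\tau_N(t)}(1-p_r)$ is therefore squeezed between $\prod_{r=1}^{\tau_N(t)}(1 - \ON\alpha_n[c_N(r) + B_n D_N(r)])$ and $\prod_{r=1}^{\tau_N(t)}(1 - \ON\alpha_n[c_N(r) - B_n' D_N(r)])$. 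Since all $\ON$ terms are positive for large $N$, both products are sums of products of the $c_N$ and $D_N$ terms with controlled signs — exactly the shape handled by Lemmata~\ref{thm:sumprod1}, \ref{thm:sumprod2} and \ref{thm:sumprod3}.

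First I would take logarithms, or rather expand the finite product directly: writing $\log(1-x) = -x - \sum_{k\geq2} x^k/k$, one gets $\sum_{r=1}^{\tau_N(t)} \log(1 - \ON\alpha_n[c_N(r)\pm \cdots])$, whose leading term is $-\ON\alpha_n\sum_{r=1}^{\tau_N(t)} c_N(r)$. By the definition of $\tau_N$ in~\eqref{eq:defn_tauN} and Proposition~\ref{thm:cN_properties}\ref{item:cN_property4} with $s'=0$, this partial sum lies in $[t, t+1]$, but more precisely it equals $t + O(c_N(\tau_N(t)))$; so after taking expectations and using~\eqref{eq:BJJK_eq3.3} the leading term converges to $-\alpha_n t$. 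The correction terms are of two kinds: the $D_N$ contributions, controlled in expectation by $\alpha_n B_n \E[\sum_{r=1}^{\tau_N(t)} D_N(r)] \to 0$ via~\eqref{eq:BJJK_eq3.4}; and the higher-order terms $\sum_r c_N(r)^k$ for $k\geq2$, which are bounded by $\sum_r c_N(r)^2$ since $c_N(r)\leq1$ (Proposition~\ref{thm:cN_properties}\ref{item:cN_property1}), and this vanishes in expectation by~\eqref{eq:BJJK_eq3.5} with $s' = 0$. The $O(N^{-1})$ slack in the $\ON$ factors contributes $O(N^{-1})\E[\tau_N(t)]$-type terms, which also vanish because $\sum_{r=1}^{\tau_N(t)} c_N(r) \leq t+1$ pins down the relevant scale. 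Finally I would use dominated convergence (the integrand is bounded in $[0,1]$) to pass the limit through the expectation and conclude that both the upper and lower sandwich bounds converge to $e^{-\alpha_n t}$, hence so does the original expectation.

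There is one genuine subtlety: the sums being estimated run to the \emph{random} upper limit $\tau_N(t)$, so term-by-term expectation manipulations must be justified. This is precisely the role of Lemma~\ref{thm:kjjslemma2}, which lets one replace $\E[\sum_{r=1}^{\tau_N(t)} f_N(\nu_r^{(1:N)})]$ by $\E[\sum_{r=1}^{\tau_N(t)} \E[f_N(\nu_r^{(1:N)})\mid\mathcal{F}_{r-1}]]$, after which the conditional hypothesis~\eqref{eq:mainthmcondition2} can be invoked inside the sum. I expect the main obstacle to be exactly this bookkeeping — carefully expanding the finite product into a sum-of-products form to which Lemmata~\ref{thm:sumprod1}--\ref{thm:sumprod3} apply with the correct constants $B$, tracking the $\ON$ prefactors from both propositions through the expansion, and ensuring the random summation limit is handled rigorously rather than the analytic estimates themselves, which are routine once the expansion is set up. A secondary bit of care is needed because the lower envelope involves $c_N(r) - B_n' D_N(r)$, which could a priori be negative; but Proposition~\ref{thm:cN_properties}\ref{item:cN_property1} gives $D_N(r)\leq c_N(r)$, and for large $N$ the relevant factors $1 - \ON\alpha_n[c_N(r)-B_n'D_N(r)]$ remain in $[0,1]$ because $c_N(r)$ is itself small on the events that matter, so monotonicity of the product in each factor is preserved.
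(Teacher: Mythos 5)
Your proposal is essentially correct, but it takes a genuinely different route from the paper. The paper expands $\prod_{r=1}^{\tau_N(t)}(1-p_r)$ multinomially into elementary symmetric polynomials of the $p_r$, bounds each sum-product via Lemmata~\ref{thm:sumprod1}--\ref{thm:sumprod3}, inserts the indicator events $E_N^1, E_N^2, E_N^3$ to keep every term's sign under control \emph{before} taking expectations, and then passes expectations and limits through the resulting infinite series via Fubini and dominated convergence. You instead sandwich $p_r$, take logarithms, and argue that $\sum_r \log(1-p_r) = -\sum_r p_r - \sum_r\sum_{k\geq 2}p_r^k/k \to -\alpha_n t$ in probability, after which boundedness of the product in $[0,1]$ and bounded convergence give the claim. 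Your route is leaner for this lemma: convergence in probability of $\sum_r c_N(r)$ to $t$, of $\sum_r c_N(r)^2$ and $\sum_r D_N(r)$ to $0$, all follow from \eqref{eq:BJJK_eq3.3}--\eqref{eq:BJJK_eq3.4} by Markov's inequality, and the expectation is handled in one stroke at the end rather than term by term. What the paper's heavier machinery buys is reusability: the sum-product bounds, the indicator lemmata, and especially the limit identity of Lemma~\ref{thm:induction_sumprodcN} are all needed again in the induction steps (Lemmata~\ref{thm:inductionUB} and \ref{thm:inductionLB}), where the extra factor $\sum_{r_1<\dots<r_k}\prod_i p_{r_i}$ makes a pure logarithm-plus-bounded-convergence argument less immediate. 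One point you should make explicit rather than wave at: controlling the tail $\sum_{k\geq2}p_r^k/k$ and keeping the lower-envelope factors nonnegative both require that $\max_{r\leq\tau_N(t)} p_r$ be bounded away from $1$ with probability tending to one; this follows from $\Prob[\max_{r\leq\tau_N(t)}c_N(r)>\delta]\leq\delta^{-2}\E[\sum_{r\leq\tau_N(t)}c_N(r)^2]\to0$, which is exactly the content of Lemma~\ref{thm:indicators_cN}, and without it the event $\{p_r=1\}$ would send $\log(1-p_r)$ to $-\infty$ and break the lower bound. Also, the slack from the $\ON$ factors multiplies $\sum_r c_N(r)\leq t+1$, not $\tau_N(t)$; you correct this yourself, but the first phrasing (``$O(N^{-1})\E[\tau_N(t)]$-type terms'') would not be controllable, since $\tau_N(t)$ typically grows with $N$.
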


\begin{proof}
We start by showing that
$\lim_{N\to\infty}\E[ \prod_{r=1}^{\tau_N(t)} (1-p_r) ] 
\leq e^{-\alpha_n t}$.\\
Setting $\xi=\Delta$ in Proposition~\ref{thm:pDelta_UB}, we have for each $r$ and sufficiently large $N$,
\begin{equation} \label{eq:018}
1-p_r
= p_{\Delta\Delta}(r) 
\leq 1 - \alpha_n \ON
        \left[ c_N(r) - B_n^\prime D_N(r) \right] .
\end{equation}
Since our interest is the $N\to\infty$ limit, it is sufficient to have bounds that hold for large enough $N$. However, some of the manipulations to follow will also require pre-limiting bounds to be non-negative.
For this reason we introduce indicator functions which guarantee non-negativity, but which will not affect the limit.
The indicators introduced at this point are such that if their conditions do not hold then the bound becomes the trivial $1-p_r \leq 1$.
 
When $N \geq 3$, a sufficient condition to ensure that the expression on the right-hand side of \eqref{eq:018} is non-negative is that the event
\begin{equation}\label{eq:defn_E1}
E_N^{1}(r) := \left\{ c_N(r) < \alpha_n^{-1} \ON \right\} 
\end{equation}
occurs, where the sequence $\ON$ is the same as that in \eqref{eq:018}.
We will also need to control the sign of $c_N(r) - B_n^\prime D_N(r)$, for which we define the event
\begin{equation}\label{eq:defn_E2}
E_N^2(r) := \left\{ c_N(r) \geq B_n^\prime D_N(r) \right\} ,
\end{equation}
and we define $E_N^1 := \bigcap_{r=1}^{\tau_N(t)} E_N^1(r)$ and $E_N^2 := \bigcap_{r=1}^{\tau_N(t)} E_N^2(r)$.
Then
\begin{equation*}
1-p_r
= p_{\Delta\Delta}(r) \leq 1 - \alpha_n \ON 
        \left[ c_N(r) - B_n^\prime D_N(r) \right] \1{E_N^1 \cap E_N^2} .
\end{equation*}
Applying a multinomial expansion and then separating the positive and negative terms,
\begin{align}
\prod_{r=1}^{\tau_N(t)} (1-p_r)
&\leq 1 + \sum_{\substack{l=2 \\ \text{even} }}^{\tau_N(t)} 
        \alpha_n^l \ON \frac{1}{l!} 
        \sum_{\substack{ s_1, \dots, s_l =1 \\ \text{all distinct} }}^{\tau_N(t)}
        \prod_{j=1}^l
        \left[ c_N(s_j) - B_n^\prime D_N(s_j) \right] \1{E_N^1 \cap E_N^2} \notag\\
    &\phantom{\leq 1} - \sum_{\substack{l=1 \\ \text{odd} }}^{\tau_N(t)} 
        \alpha_n^l \ON \frac{1}{l!} 
        \sum_{\substack{ s_1, \dots, s_l =1 \\ \text{all distinct} }}^{\tau_N(t)}
        \prod_{j=1}^l
        \left[ c_N(s_j) - B_n^\prime D_N(s_j) \right] \1{E_N^1 \cap E_N^2} .
        \label{eq:019}
\end{align}
This is further bounded by applying Lemma~\ref{thm:sumprod3} and then both bounds of Lemma~\ref{thm:sumprod1}\ref{thm:sumprod1_b}:
\begin{align*}
&\prod_{r=1}^{\tau_N(t)} (1-p_r) \\
&\leq 1 + \1{E_N^1 \cap E_N^2} \Bigg\{ 
        \sum_{\substack{l=2 \\ \text{even} }}^{\tau_N(t)} 
        \alpha_n^l \ON \frac{1}{l!} 
        \sum_{\substack{ s_1, \dots, s_l =1 \\ \text{all distinct} }}^{\tau_N(t)}
        \prod_{j=1}^l c_N(s_j) \\
    &\phantom{\leq}- \sum_{\substack{l=1 \\ \text{odd} }}^{\tau_N(t)} 
        \alpha_n^l \ON \frac{1}{l!} 
        \Bigg[ \sum_{\substack{ s_1, \dots, s_l =1 \\ \text{all distinct} }}^{\tau_N(t)}
        \prod_{j=1}^l c_N(s_j)
        - \Bigg( \sum_{s=1}^{\tau_N(t)} D_N(s) \Bigg) 
        (t+1)^{l-1} (1+B_n^\prime)^l \Bigg] \Bigg\} \\
&\leq 1 + \1{E_N^1 \cap E_N^2} \Bigg\{ \sum_{\substack{l=2 \\ \text{even} }}^{\tau_N(t)} 
        \alpha_n^l \ON \frac{1}{l!} 
        \left\{ t^l + c_N(\tau_N(t)) (t+1)^l \right\} \\
    &\phantom{\leq 1 + \1{E_N^1 \cap E_N^2} \Bigg\{}- \sum_{\substack{l=1 \\ \text{odd} }}^{\tau_N(t)} 
        \alpha_n^l \ON \frac{1}{l!} 
        \Bigg[ t^l - \Bigg( \sum_{s=1}^{\tau_N(t)} c_N(s)^2 \Bigg) 
        \binom{l}{2} (t+1)^l \Bigg] \\
    &\phantom{\leq 1 + \1{E_N^1 \cap E_N^2} \Bigg\{}- \Bigg( \sum_{s=1}^{\tau_N(t)} D_N(s) \Bigg) 
        (t+1)^{l-1} (1+B_n^\prime)^l \Bigg\}.
\end{align*}
Collecting some terms,
\begin{align}
&\prod_{r=1}^{\tau_N(t)} (1-p_r)
\leq 1 + \sum_{l=1}^{\tau_N(t)} (-\alpha_n)^l \ON \frac{1}{l!} t^l 
        \1{E_N^1 \cap E_N^2}
        + c_N(\tau_N(t)) \sum_{\substack{l=2 \\ \text{even} }}^{\tau_N(t)}
        \alpha_n^l \ON \frac{1}{l!} (t+1)^l \notag\\
    &\phantom{\prod_{r=1}^{\tau_N(t)} (1-p_r) \leq 1} + \Bigg( \sum_{s=1}^{\tau_N(t)} c_N(s)^2 \Bigg)
        \sum_{\substack{l=1 \\ \text{odd} }}^{\tau_N(t)} \alpha_n^l
        \ON \frac{1}{l!} \binom{l}{2} (t+1)^l \notag\\
    &\phantom{\prod_{r=1}^{\tau_N(t)} (1-p_r) \leq 1} + \Bigg( \sum_{s=1}^{\tau_N(t)} D_N(s) \Bigg) 
        \sum_{\substack{l=1 \\ \text{odd} }}^{\tau_N(t)} \alpha_n^l
        \ON \frac{1}{l!} (t+1)^{l-1} (1+B_n^\prime)^l \notag\\
&\phantom{\prod_{r=1}^{\tau_N(t)} (1-p_r)}\leq 1 + \sum_{l=1}^{\infty} (-\alpha_n)^l \ON \frac{1}{l!} t^l
        \I{\tau_N(t) \geq l} \1{E_N^1 \cap E_N^2}      
        + c_N(\tau_N(t)) \exp[ \alpha_n \ON (t+1) ] \notag\\
    &\phantom{\prod_{r=1}^{\tau_N(t)} (1-p_r) \leq 1} + \Bigg( \sum_{s=1}^{\tau_N(t)} c_N(s)^2 \Bigg)
        \frac{1}{2} \alpha_n^2 (t+1)^2 \exp[ \alpha_n \ON (t+1) ] \notag\\
    &\phantom{\prod_{r=1}^{\tau_N(t)} (1-p_r) \leq 1} + \Bigg( \sum_{s=1}^{\tau_N(t)} D_N(s) \Bigg)
        \exp[ \alpha_n \ON (t+1) (1+B_n^\prime) ] . \label{eq:021}
\end{align}
The requirement $\tau_N(t) \geq l$ has been dropped in all but the first term, which constitutes adding some positive terms, giving an upper bound.
Now, taking the expectation and limit, then applying \eqref{eq:BJJK_eq3.3}--\eqref{eq:BJJK_eq3.4}, and using Lemmata \ref{thm:indicators_cN}, \ref{thm:indicators_tau} and \ref{thm:indicators_DN} to show that $\lim_{N\to\infty} \Prob \left[ \{\tau_N(t) \geq l\} \cap E_N^1 \cap E_N^2 \right] =1$,
\begin{align}
\lim_{N\to\infty} \E \Bigg[ \prod_{r=1}^{\tau_N(t)} (1-p_r) \Bigg]
&\leq 1+ \sum_{l=1}^{\infty} (-\alpha_n)^l \frac{1}{l!} t^l
        \lim_{N\to\infty} \Prob \left[ \{\tau_N(t) \geq l\} \cap E_N^1 \cap E_N^2 \right] \notag\\
    &\qquad+ \lim_{N\to\infty} \E \left[ c_N(\tau_N(t)) \right]
        \exp[ \alpha_n (t+1) ] \notag\\
    &\qquad+ \lim_{N\to\infty} \E \Bigg[ \sum_{s=1}^{\tau_N(t)} 
        c_N(s)^2 \Bigg]
        \frac{1}{2} \alpha_n^2 (t+1)^2 \exp[ \alpha_n (t+1) ] \notag\\
    &\qquad+ \lim_{N\to\infty} \E \Bigg[ \sum_{s=1}^{\tau_N(t)} D_N(s) \Bigg]
        \exp[ \alpha_n (t+1) (1+B_n^\prime) ] \notag\\
&= 1+ \sum_{l=1}^{\infty} (-\alpha_n)^l \frac{1}{l!} t^l
= e^{-\alpha_n t}. \label{eq:022}
\end{align}
Passing the limit and expectation inside the infinite sum is justified by dominated convergence and Fubini's theorem.

It remains to show the corresponding lower bound: $\lim_{N\to\infty} \E[ \prod_{r=1}^{\tau_N(t)} (1-p_r) ] 
\geq e^{-\alpha_n t}$.
Setting $\xi=\Delta$ in Proposition~\ref{thm:pDelta_LB}, we have
\begin{equation}\label{eq:pDeltaDelta_LB}
1-p_t
= p_{\Delta\Delta}(t) \geq 1 - \frac{N^{n-2}}{(N-2)_{n-2}} \alpha_n 
    [ c_N(t) + B_n D_N(t) ] ,
\end{equation}
where $B_n >0$.
Due to Proposition~\ref{thm:cN_properties}\ref{item:cN_property1}, a sufficient condition for this bound to be non-negative is
\begin{equation}\label{eq:defn_E3}
E_N^3(r)
:= \left\{ c_N(r) \leq \frac{(N-2)_{n-2}}{N^{n-2}} 
        \alpha_n^{-1} (1+B_n)^{-1} \right\} ,
\end{equation}
and we define $E_N^3 := \bigcap_{r=1}^{\tau_N(t)} E_N^3(r)$. 
Then
\begin{equation*}
1-p_t
\geq \left\{ 1 - \frac{N^{n-2}}{(N-2)_{n-2}} \alpha_n 
    [ c_N(t) + B_n D_N(t) ] \right\} \1{E_N^3(t)}
\end{equation*}
is also a valid lower bound since if $E_N^3$ does not occur then this collapses to the trivial lower bound $1-p_t \geq 0$.
We now apply a multinomial expansion to the product, and split into positive and negative terms:
\begin{align}
\prod_{r=1}^{\tau_N(t)} (1-p_r)
&\geq \Bigg\{ 1 + \sum_{\substack{l=2 \\ \text{even} }}^{\tau_N(t)} 
        \alpha_n^l \ON \frac{1}{l!} 
        \sum_{\substack{ s_1, \dots, s_l =1 \\ \text{all distinct} }}^{\tau_N(t)}
        \prod_{j=1}^l
        \left[ c_N(s_j) + B_n D_N(s_j) \right] \nonumber \\
    &\qquad- \sum_{\substack{l=1 \\ \text{odd} }}^{\tau_N(t)} 
        \alpha_n^l \ON \frac{1}{l!}
        \sum_{\substack{ s_1, \dots, s_l =1 \\ \text{all distinct} }}^{\tau_N(t)}
        \prod_{j=1}^l
        \left[ c_N(s_j) + B_n D_N(s_j) \right] \Bigg\} \1{E_N^3}. \label{eqlabel_1}
\end{align}
From here, the argument for the lower bound follows the same steps as that used to obtain the upper bound.
The right-hand side of \eqref{eqlabel_1} is further bounded via Lemma~\ref{thm:sumprod2} and both bounds in Lemma~\ref{thm:sumprod1}\ref{thm:sumprod1_b}:
\begin{align*}
&\prod_{r=1}^{\tau_N(t)} (1-p_r) \\
&\geq \1{E_N^3} \Bigg\{ 1 + 
        \sum_{\substack{l=2 \\ \text{even} }}^{\tau_N(t)} 
        \alpha_n^l \ON \frac{1}{l!} 
        \sum_{\substack{ s_1, \dots, s_l =1 \\ \text{all distinct} }}^{\tau_N(t)}
        \prod_{j=1}^l c_N(s_j) - \sum_{\substack{l=1 \\ \text{odd} }}^{\tau_N(t)} 
        \alpha_n^l \ON \frac{1}{l!} \\
    &\phantom{\geq}\times
        \Bigg[ \sum_{\substack{ s_1, \dots, s_l =1 \\ \text{all distinct} }}^{\tau_N(t)}
        \prod_{j=1}^l c_N(s_j)
        + \Bigg( \sum_{s=1}^{\tau_N(t)} D_N(s) \Bigg)
        (t+1)^{l-1} (1+B_n)^l \Bigg] \Bigg\} \\
&\geq \1{E_N^3} \Bigg\{ 1 + 
        \sum_{\substack{l=2 \\ \text{even} }}^{\tau_N(t)} 
        \alpha_n^l \ON \frac{1}{l!} 
        \Bigg[ t^l - \Bigg( \sum_{s=1}^{\tau_N(t)} c_N(s)^2 \Bigg)
        \binom{l}{2}(t+1)^l \Bigg] - \sum_{\substack{l=1 \\ \text{odd} }}^{\tau_N(t)} 
        \alpha_n^l \\
    &\phantom{\geq}
        \times \ON \frac{1}{l!} \Bigg[ t^l + c_N(\tau_N(t)) (t+1)^l
        + \Bigg( \sum_{s=1}^{\tau_N(t)} D_N(s) \Bigg)
        (t+1)^{l-1} (1+B_n)^l \Bigg] \Bigg\} .
\end{align*}
Collecting terms and dropping indicators from some non-positive terms,
\begin{align}
\prod_{r=1}^{\tau_N(t)} (1-p_r)
&\geq \sum_{l=0}^{\tau_N(t)} (-\alpha_n)^l \ON 
        \frac{1}{l!} t^l \1{E_N^3} - c_N(\tau_N(t)) \sum_{\substack{l=1 \\ \text{odd} }}^{\tau_N(t)} 
        \alpha_n^l \ON \frac{1}{l!} (t+1)^l \notag\\
        &\qquad- \Bigg( \sum_{s=1}^{\tau_N(t)} c_N(s)^2 \Bigg)
        \sum_{\substack{l=2 \\ \text{even} }}^{\tau_N(t)} 
        \alpha_n^l \ON \frac{1}{l!} \binom{l}{2}(t+1)^l \notag\\
    &\qquad- \Bigg( \sum_{s=1}^{\tau_N(t)} D_N(s) \Bigg)
        \sum_{\substack{l=1 \\ \text{odd} }}^{\tau_N(t)} 
        \alpha_n^l \ON \frac{1}{l!} (t+1)^{l-1} (1+B_n)^l \notag\\
&\geq \sum_{l=0}^{\infty} (-\alpha_n)^l \ON 
        \frac{1}{l!} t^l \1{E_N^3} \I{ \tau_N(t) \geq l} - c_N(\tau_N(t)) \exp[ \alpha_n \ON (t+1) ] \notag \\
    &\qquad - \Bigg( \sum_{s=1}^{\tau_N(t)} c_N(s)^2 \Bigg)
        \frac{1}{2} \alpha_n^2 (t+1)^2 \exp[ \alpha_n \ON (t+1) ]\notag\\
    &\qquad- \Bigg( \sum_{s=1}^{\tau_N(t)} D_N(s) \Bigg)
        \exp[ \alpha_n \ON (t+1) (1+B_n) ]. \label{eq:028}
\end{align}
Now, taking the expectation and limit, and applying \eqref{eq:BJJK_eq3.3}--\eqref{eq:BJJK_eq3.4} to show that all but the first sum vanish, and Lemmata \ref{thm:indicators_cN} and \ref{thm:indicators_tau} to show that $\lim_{N\to\infty} \Prob[ \{\tau_N(t) \geq l\} \cap E_N^3 ] =1$,
\begin{align}
\lim_{N\to\infty} \E \Bigg[ \prod_{r=1}^{\tau_N(t)} (1-p_r) \Bigg]
&\geq \sum_{l=0}^{\infty} (-\alpha_n)^l \ON \frac{1}{l!} t^l 
        \lim_{N\to\infty} \Prob\left[ \{\tau_N(t) \geq l\} \cap E_N^3 \right] \notag\\
    &\qquad- \lim_{N\to\infty} \E \Bigg[ \sum_{s=1}^{\tau_N(t)} c_N(s)^2 \Bigg]
        \frac{1}{2} \alpha_n^2 (t+1)^2 \exp[ \alpha_n (t+1) ]\notag\\
    &\qquad- \lim_{N\to\infty} \E \Bigg[ c_N(\tau_N(t)) \Bigg] 
        \exp[ \alpha_n (t+1) ] \notag\\
    &\qquad- \lim_{N\to\infty} \E \Bigg[ \sum_{s=1}^{\tau_N(t)} D_N(s) \Bigg]
        \exp[ \alpha_n (t+1) (1+B_n) ] \notag\\
&= \sum_{l=0}^{\infty} (-\alpha_n)^l \frac{1}{l!} t^l
= e^{-\alpha_n t}. \label{eq:029}
\end{align}
Again, passing the limit and expectation inside the infinite sum is justified by dominated convergence and Fubini.
Combining the upper and lower bounds in \eqref{eq:022} and \eqref{eq:029} respectively concludes the proof.
\end{proof}

\begin{lemma}[Induction step upper bound]\label{thm:inductionUB}
Assume \eqref{eq:mainthmcondition2} holds.
Fix $k \in \mathbb{N}$, $i_0:=0$, $i_k:=k$. For any sequence of times
$0 = t_0 \leq t_1 \leq \cdots \leq t_k \leq t$,
\begin{align*}
\lim_{N\to\infty} \E \Bigg[ 
        \sum_{\substack{r_1 <\dots< r_k :\\ r_i \leq \tau_N(t_i) \forall i}}
        &\Bigg( \prod_{i=1}^k p_{r_i} \Bigg)
        \Bigg( \prod_{\substack{r=1 \\ \notin \{r_1,\dots,r_k\} }}^{\tau_N(t)} 
        (1-p_r) \Bigg) \Bigg] \\
&\leq \alpha_n^k e^{-\alpha_n t}
        \sum_{\substack{i_1\leq \dots\leq i_{k-1}\\ \in \{0,\dots,k\} :
         i_j \geq j}} 
        \prod_{j=1}^k \frac{(t_j - t_{j-1})^{i_j - i_{j-1}}}{(i_j - i_{j-1})! } .
\end{align*}
\end{lemma}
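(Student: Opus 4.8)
The plan is to follow the proof of the basis step, Lemma~\ref{thm:basis}, now carrying the extra product $\prod_{i=1}^k p_{r_i}$ over the $k$ jump times together with the nested sum over $r_1 < \cdots < r_k$. Since an upper bound is wanted, I would bound every factor from above: Proposition~\ref{thm:pDelta_LB} with $\xi = \Delta$ gives $p_{r_i} = 1 - p_{\Delta\Delta}(r_i) \leq \ON\,\alpha_n[c_N(r_i) + B_n D_N(r_i)]$, and Proposition~\ref{thm:pDelta_UB} with $\xi = \Delta$ gives $1 - p_r = p_{\Delta\Delta}(r) \leq 1 - \ON\,\alpha_n[c_N(r) - B_n^\prime D_N(r)]$. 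As in the basis step I would attach to the latter the indicator $\1{E_N^1 \cap E_N^2}$, with $E_N^1, E_N^2$ the events in \eqref{eq:defn_E1}--\eqref{eq:defn_E2} intersected over $r \leq \tau_N(t)$, so that each factor of the product over $r$ lies in $[0,1]$; off this event the bound degenerates to the trivial $1 - p_r \leq 1$, which does not affect the limit because $\Prob[\{\tau_N(t) \geq l\} \cap E_N^1 \cap E_N^2] \to 1$ by Lemmata~\ref{thm:indicators_cN}, \ref{thm:indicators_tau}, \ref{thm:indicators_DN}. This bounds the summand above by
\[
\ON\,\alpha_n^k \sum_{\substack{r_1 < \cdots < r_k:\\ r_i \leq \tau_N(t_i)\ \forall i}} \prod_{i=1}^k \big[ c_N(r_i) + B_n D_N(r_i) \big] \prod_{\substack{r=1\\ \notin \{r_1,\dots,r_k\}}}^{\tau_N(t)} \Big\{ 1 - \ON\,\alpha_n\big[ c_N(r) - B_n^\prime D_N(r) \big] \1{E_N^1 \cap E_N^2} \Big\}.
\]

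Expanding the product over $r$ multinomially turns it into an alternating series in $l$, the $l$-th term summing $\prod_{j=1}^l [c_N(s_j) - B_n^\prime D_N(s_j)]$ over distinct $s_1, \dots, s_l \leq \tau_N(t)$ that avoid $\{r_1, \dots, r_k\}$; multiplied by $\prod_i [c_N(r_i) + B_n D_N(r_i)]$ and summed over $r_1 < \cdots < r_k$, this splits into a \emph{leading part}, in which every bracket is replaced by just $c_N(\cdot)$, and a \emph{remainder} collecting all terms that carry at least one $D_N$ factor. The remainder is controlled by the crude envelopes of Lemmata~\ref{thm:sumprod1}\ref{thm:sumprod1_a}, \ref{thm:sumprod2} and \ref{thm:sumprod3}: its expectation is dominated by an $N$-uniformly bounded combination of $\E[c_N(\tau_N(t))]$, $\E[\sum_{r=1}^{\tau_N(t)} c_N(r)^2]$ and $\E[\sum_{r=1}^{\tau_N(t)} D_N(r)]$, each of which tends to $0$ by \eqref{eq:BJJK_eq3.3}--\eqref{eq:BJJK_eq3.4}. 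The leading part, after discarding the indicators in the limit via $\Prob[\{\tau_N(t) \geq l\} \cap E_N^1 \cap E_N^2] \to 1$, is
\[
\ON\,\alpha_n^k \sum_{l=0}^{\infty} \frac{(-\alpha_n)^l}{l!} \sum_{\substack{r_1 < \cdots < r_k:\ r_i \leq \tau_N(t_i)\\ s_1, \dots, s_l \leq \tau_N(t)\ \text{distinct},\ \notin \{r_i\}}} \prod_{i=1}^k c_N(r_i) \prod_{j=1}^l c_N(s_j),
\]
and its $N \to \infty$ limit in expectation --- incorporating the asymptotically negligible exclusion $s_j \notin \{r_1, \dots, r_k\}$ --- is precisely the quantity evaluated by Lemma~\ref{thm:induction_sumprodcN}, equal to $\alpha_n^k e^{-\alpha_n t}\,V_k(t_{1:k})$, where $V_k(t_{1:k})$ denotes the Lebesgue measure of $\{0 < u_1 < \cdots < u_k:\ u_i \leq t_i\ \forall i\}$.

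Collecting the pieces, the original summand lies in $[0,1]$ and the constructed upper bound is bounded uniformly in $N$, so dominated convergence moves the limit inside the expectation; the remainder vanishes as above; and the interchange of limit, expectation and the sum over $l$ is justified by Lemma~\ref{thm:DCT_Fubini}, the series being summable uniformly in $N$. This yields $\limsup_{N\to\infty} \E[\,\cdots\,] \leq \alpha_n^k e^{-\alpha_n t}\,V_k(t_{1:k})$, and the identity
\[
V_k(t_{1:k}) = \sum_{\substack{i_1 \leq \cdots \leq i_{k-1} \in \{0,\dots,k\}\\ i_j \geq j\ \forall j}} \prod_{j=1}^k \frac{(t_j - t_{j-1})^{i_j - i_{j-1}}}{(i_j - i_{j-1})!},
\]
established in \citet[p.~459]{mohle1999} (with $i_0 := 0$, $i_k := k$), rewrites this as the asserted bound.

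I expect the main obstacle to be the combinatorial estimate encapsulated in Lemma~\ref{thm:induction_sumprodcN}: controlling at once the ordered jump times $r_1 < \cdots < r_k$, each subject to its own constraint $r_i \leq \tau_N(t_i)$, and the unordered indices $s_1, \dots, s_l \leq \tau_N(t)$, while handling the exclusion $s_j \notin \{r_1, \dots, r_k\}$, which must be estimated on \emph{both} sides so as not to destroy the delicate alternating cancellation that produces the $e^{-\alpha_n t}$ factor. A second, pervasive source of technical overhead --- absent from the Markovian computation in \citet{mohle1999} --- is that the whole expression sits inside an expectation over the random environment $\mathcal{F}_\infty$: this forces one to keep every pre-limiting bound non-negative (the role of the indicator events) and to control the $l$-sums uniformly in $N$ so that Fubini and dominated convergence remain applicable.
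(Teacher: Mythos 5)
Your proposal follows essentially the same route as the paper's proof: the same upper bounds on $p_{r_i}$ (via Proposition~\ref{thm:pDelta_LB}) and on $1-p_r$ (via Proposition~\ref{thm:pDelta_UB} with the indicators $E_N^1\cap E_N^2$), the same multinomial expansion split into a leading all-$c_N$ part plus a remainder carrying $D_N$ factors and the overlap correction for $s_j\in\{r_1,\dots,r_k\}$, the same vanishing of error terms via \eqref{eq:BJJK_eq3.3}--\eqref{eq:BJJK_eq3.4}, and the same appeal to Lemmata~\ref{thm:induction_sumprodcN} and \ref{thm:DCT_Fubini} for the leading term and the interchange of limits. The only differences are cosmetic (you phrase the limit as a simplex volume $V_k$ before invoking M\"ohle's identity, and you compress the separate roles of Lemma~\ref{thm:sumprod1}\ref{thm:sumprod1_b} for the $s$-sum and Lemma~\ref{thm:induction_sumprodcN} for the $r$-sum into one step), so the argument is correct and matches the paper.
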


\begin{proof}
We use the bound on $(1-p_r)$ from \eqref{eq:018}, which holds for sufficiently large $N$, and apply a multinomial expansion. Define events $E_N^1$ and $E_N^2$ as intersections of events of the form in \eqref{eq:defn_E1} and \eqref{eq:defn_E2}, such that the following manipulations make sense:
\begin{align}
&\prod_{\substack{r=1 \\ \notin \{r_1,\dots,r_k\} }}^{\tau_N(t)} (1-p_r)
\leq \prod_{\substack{r=1 \\ \notin \{r_1,\dots,r_k\} }}^{\tau_N(t)} 
        \left\{ 1 - \alpha_n  \ON [ c_N(r) - B_n^\prime D_N(r) ] 
        \1{E_N^1 \cap E_N^2} \right\} \notag\\
&= 1 + \sum_{l=1}^{\tau_N(t) -k}
        (-\alpha_n)^l \ON \frac{1}{l!}
        \sum_{\substack{s_1, \dots, s_l =1 \\ \notin \{r_1,\dots,r_k\} 
        \\ \text{all distinct} }}^{\tau_N(t)}
        \prod_{j=1}^l [ c_N(s_j) - B_n^\prime D_N(s_j) ]
        \1{E_N^1 \cap E_N^2} \notag\\
&= 1 + \sum_{l=1}^{\tau_N(t) -k}
        (-\alpha_n)^l \ON \frac{1}{l!}
        \sum_{\substack{ s_1, \dots, s_l =1 \\ \text{all distinct} }}^{\tau_N(t)}
        \prod_{j=1}^l [ c_N(s_j) - B_n^\prime D_N(s_j) ] \1{E_N^1 \cap E_N^2} \notag\\
    &\qquad- \sum_{l=1}^{\tau_N(t) -k}
        (-\alpha_n)^l \ON \frac{1}{l!}
        \sum_{\substack{s_1, \dots, s_l =1 \\ \text{all distinct}: 
        \\ \exists i,i^\prime : s_i=r_{i^\prime} }}^{\tau_N(t)} 
        \prod_{j=1}^l [ c_N(s_j) - B_n^\prime D_N(s_j) ]
        \1{E_N^1 \cap E_N^2} . \label{eq:031}
\end{align}
The penultimate line above is exactly the expansion we had in the basis step \eqref{eq:019}, except for the upper limit of the summation over $l$, and as such following the same arguments gives a bound analogous to that in \eqref{eq:021}:
\begin{align*}
&1 + \sum_{l=1}^{\tau_N(t) -k}
        (-\alpha_n)^l \ON \frac{1}{l!}
        \sum_{\substack{ s_1, \dots, s_l =1 \\ \text{all distinct} }}^{\tau_N(t)}
        \prod_{j=1}^l [ c_N(s_j) - B_n^\prime D_N(s_j) ] \1{E_N^1 \cap E_N^2} \\
&\leq 1+ \sum_{l=1}^{\tau_N(t) -k} (-\alpha_n)^l \ON \frac{1}{l!} t^l
        \1{E_N^1 \cap E_N^2}
        + c_N(\tau_N(t)) \exp[ \alpha_n \ON (t+1) ] \\
    &\qquad+ \Bigg( \sum_{s=1}^{\tau_N(t)} c_N(s)^2 \Bigg)
        \frac{1}{2} \alpha_n^2 (t+1)^2 \exp[ \alpha_n \ON (t+1) ] \\
    &\qquad+ \Bigg( \sum_{s=1}^{\tau_N(t)} D_N(s) \Bigg)
        \exp[ \alpha_n \ON (t+1) (1+B_n^\prime) ] .
\end{align*}
For the last line of \eqref{eq:031}, recalling that $D_N(t) \leq c_N(t)$ (Proposition~\ref{thm:cN_properties}\ref{item:cN_property1}),
\begin{align*}
&- \sum_{l=1}^{\tau_N(t) -k} (-\alpha_n)^l \ON \frac{1}{l!}
        \sum_{\substack{s_1, \dots, s_l =1 \\ \text{all distinct}: 
        \\ \exists i,i^\prime : s_i=r_{i^\prime} }}^{\tau_N(t)} 
        \prod_{j=1}^l \{ c_N(s_j) - B_n^\prime D_N(s_j) \} 
        \1{E_N^1 \cap E_N^2} \\
&\leq \sum_{l=1}^{\tau_N(t) -k} \alpha_n^l \ON \frac{1}{l!}
        \sum_{\substack{s_1, \dots, s_l =1 \\ \text{all distinct}: 
        \\ \exists i,i^\prime : s_i=r_{i^\prime} }}^{\tau_N(t)} 
        \prod_{j=1}^l \{ c_N(s_j) + B_n^\prime D_N(s_j) \} \\
&\leq \sum_{l=1}^{\tau_N(t) -k} \alpha_n^l \ON \frac{1}{l!}
        \sum_{\substack{s_1, \dots, s_l =1 \\ \text{all distinct}: 
        \\ \exists i,i^\prime : s_i=r_{i^\prime} }}^{\tau_N(t)} 
        (1 + B_n^\prime)^l \prod_{j=1}^l c_N(s_j) \\
&\leq \sum_{s \in \{r_1,\dots,r_k\} } c_N(s)
        \sum_{l=1}^{\tau_N(t) -k} \alpha_n^l \ON
        \frac{1}{(l-1)!}  (1 + B_n^\prime)^l
        \sum_{\substack{ s_1, \dots, s_{l-1} =1 \\ \text{all distinct} }}^{\tau_N(t)}
        \prod_{j=1}^{l-1} c_N(s_j) \\
&\leq \sum_{j=1}^k c_N(r_j)
        \sum_{l=1}^{\tau_N(t) -k} \alpha_n^l \ON
        \frac{1}{(l-1)!}  (1 + B_n^\prime)^l (t+1)^{l-1} \\
&\leq \left( \sum_{j=1}^k c_N(r_j) \right)
        \alpha_n (1 + B_n^\prime) 
        \exp[ \alpha_n \ON (1 + B_n^\prime) (t+1) ] ,
\end{align*}
where the penultimate inequality uses Lemma~\ref{thm:sumprod1}\ref{thm:sumprod1_a}.
Substituting the preceding two displays into \eqref{eq:031}, we obtain
\begin{align}
\prod_{\substack{r=1 \\ \notin \{r_1,\dots,r_k\} }}^{\tau_N(t)} (1-p_r)&\leq 1+ \sum_{l=1}^{\tau_N(t) -k} (-\alpha_n)^l \ON \frac{1}{l!} t^l
        \1{E_N^1 \cap E_N^2}
        + c_N(\tau_N(t)) \exp[ \alpha_n \ON (t+1) ] \notag\\
    &\qquad+ \left( \sum_{s=1}^{\tau_N(t)} c_N(s)^2 \right)
        \frac{1}{2} \alpha_n^2 (t+1)^2 \exp[ \alpha_n \ON (t+1) ] \notag\\
    &\qquad+ \left( \sum_{s=1}^{\tau_N(t)} D_N(s) \right)
        \exp[ \alpha_n \ON (t+1) (1+B_n^\prime) ] \notag\\
    &\qquad+ \left( \sum_{j=1}^k c_N(r_j) \right)
        \alpha_n (1 + B_n^\prime)
        \exp[ \alpha_n \ON (1 + B_n^\prime) (t+1) ] . \label{eq:034b}
\end{align}
To obtain a corresponding bound for $p_r$, we use \eqref{eq:pDeltaDelta_LB} and Lemma~\ref{thm:sumprod2} (with ordered rather than distinct indices) to obtain
\begin{align}
\sum_{\substack{r_1 <\dots< r_k :\\ r_i \leq \tau_N(t_i) \forall i}}
        \prod_{i=1}^k p_{r_i}
&\leq \alpha_n^k \ON 
        \sum_{\substack{r_1 <\dots< r_k :\\ r_i \leq \tau_N(t_i) \forall i}}
        \prod_{i=1}^k \left[ c_N(r_i) + B_n D_N(r_i) \right] \notag\\
&\leq \alpha_n^k \ON
        \sum_{\substack{r_1 <\dots< r_k :\\ r_i \leq \tau_N(t_i) \forall i}}
        \prod_{i=1}^k c_N(r_i) \notag \\
        &\phantom{\leq} + \Bigg( \sum_{s=1}^{\tau_N(t)} D_N(s) \Bigg)
        \alpha_n^k \ON (t+1)^{k-1} (1+B_n)^k . \label{eq:035}
\end{align}
The following looser but simpler bound will also be useful:
\begin{align}
\prod_{i=1}^k p_{r_i}
\leq \alpha_n^k \ON 
        \prod_{i=1}^k \left[ c_N(r_i) + B_n D_N(r_i) \right] 
        &\leq \alpha_n^k \ON  \prod_{i=1}^k c_N(r_i) (1 + B_n) \notag\\
&\leq \alpha_n^k \ON (1 + B_n)^k
        \prod_{i=1}^k c_N(r_i). \label{eq:036}
\end{align}
Using Lemma~\ref{thm:sumprod1}\ref{thm:sumprod1_a}, \eqref{eq:036} also leads to the deterministic bound
\begin{align}
\sum_{\substack{r_1 <\dots< r_k :\\ r_i \leq \tau_N(t_i) \forall i}}
        \prod_{i=1}^k p_{r_i}
&\leq \alpha_n^k \ON (1 + B_n)^k \frac{1}{k!}
        \sum_{r_1 \neq \dots\neq r_k}^{\tau_N(t)} 
        \prod_{i=1}^k c_N(r_i) \notag\\
&\leq \alpha_n^k \ON (1 + B_n)^k \frac{1}{k!} (t+1)^k. 
        \label{eq:037}
\end{align}
All the ingredients for obtaining the bound in the statement of Lemma \ref{thm:inductionUB} are now in place.
First, by \eqref{eq:034b},
\begin{align*}
&\sum_{\substack{r_1 <\dots< r_k :\\ r_i \leq \tau_N(t_i) \forall i}}
        \Bigg( \prod_{i=1}^k p_{r_i} \Bigg)
        \Bigg( \prod_{\substack{r=1 \\ \notin \{r_1,\dots,r_k\} }}^{\tau_N(t)} 
        (1-p_r) \Bigg) \\
&\leq \Bigg\{ 1+ \sum_{l=1}^{\tau_N(t) -k} (-\alpha_n)^l \ON 
        \frac{1}{l!} t^l \1{E_N^1 \cap E_N^2} \Bigg\}
        \sum_{\substack{r_1 <\dots< r_k :\\ r_i \leq \tau_N(t_i) \forall i}}
        \prod_{i=1}^k p_{r_i} \\
    &\phantom{\leq} + \Bigg\{ c_N(\tau_N(t)) 
        \exp[ \alpha_n \ON (t+1) ] \\
    &\phantom{\leq + \Bigg\{} + \Bigg( \sum_{s=1}^{\tau_N(t)} c_N(s)^2 \Bigg)
        \frac{1}{2} \alpha_n^2 (t+1)^2 \exp[ \alpha_n \ON (t+1) ] \\
    &\phantom{\leq + \Bigg\{} + \Bigg( \sum_{s=1}^{\tau_N(t)} D_N(s) \Bigg)
        \exp[ \alpha_n \ON (t+1) (1+B_n^\prime) ] \Bigg\}
        \sum_{\substack{r_1 <\dots< r_k :\\ r_i \leq \tau_N(t_i) \forall i}}
        \prod_{i=1}^k p_{r_i} \\
    &\phantom{\leq} + \exp[ \alpha_n \ON (1 + B_n^\prime) (t+1) ]
        \alpha_n (1 + B_n^\prime)
        \sum_{\substack{r_1 <\dots< r_k :\\ r_i \leq \tau_N(t_i) \forall i}}
        \sum_{j=1}^k c_N(r_j)
        \prod_{i=1}^k p_{r_i} .
\end{align*}
To further bound the right-hand side, we apply \eqref{eq:035} to the first term, \eqref{eq:037} to the second, and \eqref{eq:036} to the third, yielding
\begin{align*}
&\sum_{\substack{r_1 <\dots< r_k :\\ r_i \leq \tau_N(t_i) \forall i}}
        \Bigg( \prod_{i=1}^k p_{r_i} \Bigg)
        \Bigg( \prod_{\substack{r=1 \\ \notin \{r_1,\dots,r_k\} }}^{\tau_N(t)} 
        (1-p_r) \Bigg) \\
&\leq \alpha_n^k \ON \Bigg\{
        1+ \sum_{l=1}^{\tau_N(t) -k} (-\alpha_n)^l \ON \frac{1}{l!} t^l
        \1{E_N^1 \cap E_N^2} \Bigg\}
        \sum_{\substack{r_1 <\dots< r_k :\\ r_i \leq \tau_N(t_i) \forall i}}
        \prod_{i=1}^k c_N(r_i) \\
    &\phantom{\leq}+ \Bigg( \sum_{s=1}^{\tau_N(t)} D_N(s) \Bigg)
        \alpha_n^k \ON (t+1)^{k-1} (1+B_n)^k
        \sum_{l=0}^{\tau_N(t)} (\alpha_n)^l \ON \frac{1}{l!} t^l \\
    &\phantom{\leq}+ \Bigg\{ c_N(\tau_N(t)) 
        \exp[ \alpha_n \ON (t+1) ]  + \Bigg( \sum_{s=1}^{\tau_N(t)} c_N(s)^2 \Bigg)
        \frac{1}{2} \alpha_n^2 (t+1)^2 \exp[ \alpha_n \ON (t+1) ] \\
    &\phantom{\leq + \Bigg\{}+ \Bigg( \sum_{s=1}^{\tau_N(t)} D_N(s) \Bigg)
        \exp[ \alpha_n \ON (t+1) (1+B_n^\prime) ] \Bigg\}
        \alpha_n^k \ON (1 + B_n)^k \frac{(t+1)^k}{k!} \\
    &\phantom{\leq}+ \exp[ \alpha_n (1 + B_n^\prime) (t+1) ]
        \alpha_n (1 + B_n^\prime)
        \alpha_n^k \ON (1 + B_n)^k \\
    &\hspace{6cm}\times \sum_{\substack{r_1 <\dots< r_k 
        :\\ r_i \leq \tau_N(t_i) \forall i}}
        \sum_{j=1}^k c_N(r_j)
        \prod_{i=1}^k c_N(r_i) .
\end{align*}
Taking an expectation and letting $N \to \infty$, the second, third, fourth, and fifth lines on the right-hand side vanish by \eqref{eq:BJJK_eq3.3}--\eqref{eq:BJJK_eq3.4}, leaving
\begin{align}
&\lim_{N\to\infty} \E \Bigg[ 
        \sum_{\substack{r_1 <\dots< r_k :\\ r_i \leq \tau_N(t_i) \forall i}}
        \Bigg( \prod_{i=1}^k p_{r_i} \Bigg)
        \Bigg( \prod_{\substack{r=1 \\ \notin \{r_1,\dots,r_k\} }}^{\tau_N(t)} 
        (1-p_r) \Bigg) \Bigg] \notag \\
&\leq \alpha_n^k \lim_{N\to\infty} \E \Bigg[
        \sum_{\substack{r_1 <\dots< r_k :\\ r_i \leq \tau_N(t_i) \forall i}}
        \prod_{i=1}^k c_N(r_i) \Bigg] \notag\\
    &\phantom{\leq} + \alpha_n^k
        \sum_{l=1}^{\infty} (-\alpha_n)^l \frac{1}{l!} t^l
        \lim_{N\to\infty} \E \Bigg[ \I{\tau_N(t)\geq k+l} \1{E_N^1 \cap E_N^2}
        \sum_{\substack{r_1 <\dots< r_k :\\ r_i \leq \tau_N(t_i) \forall i}}
        \prod_{i=1}^k c_N(r_i) \Bigg] \notag\\
    &\phantom{\leq}+ \exp[ \alpha_n (1 + B_n^\prime) (t+1) ]
        \alpha_n^{k+1} (1 + B_n^\prime) (1 + B_n)^k \notag\\
    &\phantom{\leq +} \times \lim_{N\to\infty} \E \Bigg[ 
        \sum_{\substack{r_1 <\dots< r_k :\\ r_i \leq \tau_N(t_i) \forall i}}
        \sum_{j=1}^k c_N(r_j)
        \prod_{i=1}^k c_N(r_i) \Bigg] , \label{eq:040}
\end{align}
where passing the limit and expectation inside the infinite sum is justified by Lemma~\ref{thm:DCT_Fubini}.
To see that the last line vanishes, recall that $0 = t_0 \leq t_1 \leq \ldots \leq t_k \leq t$, whereupon
\begin{align*}
\sum_{\substack{r_1 <\dots< r_k :\\ r_i \leq \tau_N(t_i) \forall i}}
        \sum_{j=1}^k c_N(r_j)
        \prod_{i=1}^k c_N(r_i)
&\leq \frac{1}{k!} \sum_{\substack{r_1, \dots, r_k \\ \text{all distinct} }}^{\tau_N(t)}
        \sum_{j=1}^k c_N(r_j)
        \prod_{i=1}^k c_N(r_i) \\
&\leq \frac{1}{k!} 
        \sum_{j=1}^k \sum_{s=1}^{\tau_N(t)} c_N(s)^2
        \sum_{\substack{r_1, \dots, r_{k-1} \\ \text{all distinct} }}^{\tau_N(t)}
        \prod_{i=1}^{k-1} c_N(r_i) \\
&\leq \frac{1}{(k-1)!} 
        \sum_{s=1}^{\tau_N(t)} c_N(s)^2
        (t+1)^{k-1},
\end{align*}
using Lemma~\ref{thm:sumprod1}\ref{thm:sumprod1_a} for the final inequality.
Hence, by \eqref{eq:BJJK_eq3.5},
\begin{align*}
\lim_{N\to\infty} \E &\Bigg[ \sum_{\substack{r_1 <\dots< r_k :\\ r_i \leq \tau_N(t_i) \forall i}}
        \sum_{s \in \{r_1,\dots,r_k\} } c_N(s) \prod_{i=1}^k c_N(r_i) \Bigg] \\
&\leq \frac{1}{(k-1)!} (t+1)^{k-1}
         \lim_{N\to\infty} \E \Bigg[ \sum_{s=1}^{\tau_N(t)} c_N(s)^2 \Bigg] = 0.
\end{align*}
By Lemmata \ref{thm:indicators_cN}, \ref{thm:indicators_tau} and \ref{thm:indicators_DN}, $\lim_{N\to\infty} \Prob[ \{ \tau_N(t)\geq k+l \} \cap E_N^1 \cap E_N^2 ] =1$,  so we can apply Lemma~\ref{thm:induction_sumprodcN} to the remaining expectations in \eqref{eq:040}, yielding
\begin{align*}
&\lim_{N\to\infty} \E \Bigg[ 
        \sum_{\substack{r_1 <\dots< r_k :\\ r_i \leq \tau_N(t_i) \forall i}}
        \Bigg( \prod_{i=1}^k p_{r_i} \Bigg)
        \Bigg( \prod_{\substack{r=1 \\ \notin \{r_1,\dots,r_k\} }}^{\tau_N(t)} 
        (1-p_r) \Bigg) \Bigg] \\
&\hspace{3cm}\leq \alpha_n^k
        \sum_{l=0}^{\infty} (-\alpha_n)^l \frac{1}{l!} t^l
        \sum_{\substack{i_1\leq \dots\leq i_{k-1}\\ \in \{0,\dots,k\} :
        i_j \geq j}}
        \prod_{j=1}^k \frac{(t_j - t_{j-1})^{i_j - i_{j-1}}}{(i_j - i_{j-1})! } \\
&\hspace{3cm}= \alpha_n^k e^{-\alpha_n t}
        \sum_{\substack{i_1\leq \dots\leq i_{k-1}\\ \in \{0,\dots,k\} :
        i_j \geq j}}
        \prod_{j=1}^k \frac{(t_j - t_{j-1})^{i_j - i_{j-1}}}{(i_j - i_{j-1})! }.
\end{align*}
\end{proof}

\begin{lemma}[Induction step lower bound]\label{thm:inductionLB}
Assume \eqref{eq:mainthmcondition2} holds.
Fix $k \in \mathbb{N}$, $i_0:=0$, $i_k:=k$. For any sequence of times
$0 = t_0 \leq t_1 \leq \cdots \leq t_k \leq t$,
\begin{align*}
\lim_{N\to\infty} \E \Bigg[ 
        \sum_{\substack{r_1 <\dots< r_k :\\ r_i \leq \tau_N(t_i) \forall i}}
        &\Bigg( \prod_{i=1}^k p_{r_i} \Bigg)
        \Bigg( \prod_{\substack{r=1 \\ \notin \{r_1,\dots,r_k\} }}^{\tau_N(t)} 
        (1-p_r) \Bigg) \Bigg] \\
&\geq \alpha_n^k e^{-\alpha_n t}
        \sum_{\substack{i_1\leq \dots\leq i_{k-1}\\ \in \{0,\dots,k\} :
        i_j \geq j}}
        \prod_{j=1}^k \frac{(t_j - t_{j-1})^{i_j - i_{j-1}}}{(i_j - i_{j-1})! } .
\end{align*}
\end{lemma}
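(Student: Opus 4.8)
The plan is to mirror the proof of Lemma~\ref{thm:inductionUB}, with the roles of Propositions~\ref{thm:pDelta_LB} and~\ref{thm:pDelta_UB} interchanged. We must bound below the expectation of the non-negative integrand $\sum_{\substack{r_1<\dots<r_k:\\ r_i\leq\tau_N(t_i)\,\forall i}}\big(\prod_{i=1}^k p_{r_i}\big)\big(\prod_{\substack{r=1\\ \notin\{r_1,\dots,r_k\}}}^{\tau_N(t)}(1-p_r)\big)$, which I will do term by term. For the non-jump factors $1-p_r=p_{\Delta\Delta}(r)$ I will use the lower bound of Proposition~\ref{thm:pDelta_LB}, exactly as in~\eqref{eq:pDeltaDelta_LB}; for the jump factors $p_{r_i}=1-p_{\Delta\Delta}(r_i)$ I will use Proposition~\ref{thm:pDelta_UB}, which rearranges to $p_{r_i}\geq \ON\alpha_n[c_N(r_i)-B_n' D_N(r_i)]$. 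Neither bound is guaranteed non-negative pre-limit, so, following Lemma~\ref{thm:basis} and Lemma~\ref{thm:inductionUB}, I attach to the first an indicator of an event of the form~\eqref{eq:defn_E3} and to the second one of the form~\eqref{eq:defn_E2}; each then becomes a non-negative lower bound, so that the product of the two (non-negative) lower bounds is a valid lower bound for the integrand. Taking $E_N^2$ and $E_N^3$ to be these events intersected over $r\in\{1,\dots,\tau_N(t)\}$, the joint indicator $\1{E_N^2\cap E_N^3}$ does not depend on $r_{1:k}$ and factors out of the sum, and Lemmata~\ref{thm:indicators_cN}, \ref{thm:indicators_tau} and~\ref{thm:indicators_DN} give that $\Prob[E_N^2\cap E_N^3]\to 1$, so these indicators will not affect the limit.

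After this reduction the integrand is bounded below by $\ON\alpha_n^k\,\1{E_N^2\cap E_N^3}$ times the sum over $r_1<\dots<r_k$ with $r_i\leq\tau_N(t_i)$ of $\prod_{i=1}^k[c_N(r_i)-B_n' D_N(r_i)]$ multiplied by $\prod_{\substack{r=1\\ \notin\{r_1,\dots,r_k\}}}^{\tau_N(t)}\{1-\alpha_n\tfrac{N^{n-2}}{(N-2)_{n-2}}[c_N(r)+B_n D_N(r)]\}$. Next I expand the product over non-jump times by the multinomial theorem and, as in~\eqref{eq:031}, split each inner sum over $s_{1:l}$ into a sum over all-distinct indices minus a sum over configurations that coincide with some $r_{i'}$. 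The all-distinct part is exactly the expansion in the basis-step lower bound, so applying Lemma~\ref{thm:sumprod2} to the odd-$l$ terms and both inequalities of Lemma~\ref{thm:sumprod1}\ref{thm:sumprod1_b} (with $s=0$, recalling $c_N(\tau_N(0))=0$) reproduces, as in~\eqref{eq:028}, the leading term $\sum_{l\geq 0}(-\alpha_n)^l\ON\frac{t^l}{l!}\I{\tau_N(t)\geq k+l}$ plus corrections controlled by $c_N(\tau_N(t))$, $\sum_{s\leq\tau_N(t)}c_N(s)^2$ and $\sum_{s\leq\tau_N(t)}D_N(s)$. The coinciding part carries an overall minus sign; I bound it below by discarding its indicator, replacing $(-\alpha_n)^l$ by $-\alpha_n^l$, and using $D_N\leq c_N$ (Proposition~\ref{thm:cN_properties}\ref{item:cN_property1}) together with Lemma~\ref{thm:sumprod1}\ref{thm:sumprod1_a}, which produces the familiar correction $-\big(\sum_{j=1}^k c_N(r_j)\big)\alpha_n(1+B_n)\exp[\alpha_n\ON(1+B_n)(t+1)]$. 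In parallel, a Lemma~\ref{thm:sumprod3}-type expansion over ordered indices bounds $\sum_{r_1<\dots<r_k,\,r_i\leq\tau_N(t_i)}\prod_i[c_N(r_i)-B_n' D_N(r_i)]$ below by $\sum_{r_1<\dots<r_k,\,r_i\leq\tau_N(t_i)}\prod_i c_N(r_i)$ minus $\big(\sum_{s\leq\tau_N(t)}D_N(s)\big)(t+1)^{k-1}(1+B_n')^k$.

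Assembling these bounds, taking expectations, and letting $N\to\infty$, every correction term vanishes by~\eqref{eq:BJJK_eq3.3}--\eqref{eq:BJJK_eq3.5}; in particular the term involving $\sum_j c_N(r_j)\prod_i c_N(r_i)$ vanishes by~\eqref{eq:BJJK_eq3.5} after bounding it above by $\tfrac{1}{(k-1)!}(t+1)^{k-1}\sum_{s\leq\tau_N(t)}c_N(s)^2$ via Lemma~\ref{thm:sumprod1}\ref{thm:sumprod1_a}, just as in Lemma~\ref{thm:inductionUB}. What survives is $\alpha_n^k\sum_{l\geq 0}(-\alpha_n)^l\frac{t^l}{l!}\lim_{N\to\infty}\E\big[\I{\tau_N(t)\geq k+l}\,\1{E_N^2\cap E_N^3}\sum_{r_1<\dots<r_k,\,r_i\leq\tau_N(t_i)}\prod_i c_N(r_i)\big]$, the interchange of limit, expectation and infinite sum over $l$ being justified by Lemma~\ref{thm:DCT_Fubini}. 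Since $\Prob[E_N^2\cap E_N^3\cap\{\tau_N(t)\geq k+l\}]\to 1$, Lemma~\ref{thm:induction_sumprodcN} evaluates each surviving expectation as $\sum_{\substack{i_1\leq\dots\leq i_{k-1}\in\{0,\dots,k\}:\\ i_j\geq j}}\prod_{j=1}^k\frac{(t_j-t_{j-1})^{i_j-i_{j-1}}}{(i_j-i_{j-1})!}$, and $\sum_{l\geq 0}(-\alpha_n)^l\frac{t^l}{l!}=e^{-\alpha_n t}$ then yields the claimed lower bound.

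The step I expect to be the main obstacle is the sign bookkeeping. Because the target is a one-sided (lower) bound, every intermediate inequality must point the right way (positive contributions bounded below, magnitudes of negative contributions bounded above), while at the same time both intermediate estimates, for the jump-time product $\prod_i p_{r_i}$ and for the non-jump-time product $\prod_{r\notin\{r_1,\dots,r_k\}}(1-p_r)$, must be kept non-negative so that their product is still a legitimate lower bound, and the restricted index range $r_i\leq\tau_N(t_i)$ must be reconciled with the full range $1\leq r\leq\tau_N(t)$ used by the sum-product lemmata of Section~\ref{sec:bounds_on_sum-products}. None of this is conceptually new beyond Lemmata~\ref{thm:basis} and~\ref{thm:inductionUB}; it is the combination that requires care.
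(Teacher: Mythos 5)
Your outline is sound and would yield a correct proof, but it takes a genuinely different decomposition from the paper's. The paper's first move, at \eqref{eq:032a}, is to lower-bound the restricted product by the \emph{full} product,
\begin{equation*}
\prod_{\substack{r=1 \\ \notin \{r_1,\dots,r_k\} }}^{\tau_N(t)} (1-p_r)
\;\geq\; \prod_{r=1}^{\tau_N(t)} (1-p_r),
\end{equation*}
which is valid because each reinstated factor lies in $[0,1]$. This single step decouples the non-jump product from $r_{1:k}$, so the paper never needs the \eqref{eq:031}-style split of the multinomial expansion into ``all distinct'' minus ``coinciding'' index configurations, nor the resulting $\sum_j c_N(r_j)$ correction term; it simply multiplies the positive and negative parts of the bound on $\sum_{r_{1:k}}\prod_i p_{r_i}$ (obtained from \eqref{eq:050a} and Lemma~\ref{thm:sumprod3}) by the lower bound \eqref{eq:033a} and the upper bound \eqref{eq:034a} on the scalar $\prod_{r=1}^{\tau_N(t)}(1-p_r)$, respectively. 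Your route — mirroring Lemma~\ref{thm:inductionUB} with the roles of Propositions~\ref{thm:pDelta_LB} and~\ref{thm:pDelta_UB} interchanged — works, and your handling of the coinciding part (replace $(-\alpha_n)^l$ by $-\alpha_n^l$, use $D_N \leq c_N$ and Lemma~\ref{thm:sumprod1}\ref{thm:sumprod1_a}, then kill the resulting $\sum_j c_N(r_j)\prod_i c_N(r_i)$ term via \eqref{eq:BJJK_eq3.5}) is the right one; but it costs you an extra layer of bookkeeping that the paper avoids. In particular, when you finally multiply the two $r_{1:k}$-dependent bounds and sum, the leading factor $\sum_l(-\alpha_n)^l\ON\frac{t^l}{l!}\I{\tau_N(t)\geq k+l}\1{E_N^3}$ is sign-indefinite, so you must split it into even and odd $l$ and pair each with a lower or upper bound on $\sum_{r_{1:k}}\prod_i[c_N(r_i)-B_n'D_N(r_i)]\1{E_N^2}$ accordingly (together with the deterministic bound $\sum_{r_{1:k}}\prod_i c_N(r_i)\leq (t+1)^k/k!$ to dispose of the $r_{1:k}$-independent corrections); you flag this obstacle correctly but do not execute it. The trade-off is clear: your version is uniform with the upper-bound proof, while the paper's one-line relaxation at \eqref{eq:032a} buys a substantially shorter argument at the price of an asymmetry between the two induction lemmata.
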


\begin{proof}
Firstly,
\begin{align}
\sum_{\substack{r_1 <\dots< r_k :\\ r_i \leq \tau_N(t_i) \forall i}}
        &\Bigg( \prod_{i=1}^k p_{r_i} \Bigg)
        \Bigg( \prod_{\substack{r=1 \\ \notin \{r_1,\dots,r_k\} }}^{\tau_N(t)} 
        (1-p_r) \Bigg) \geq \sum_{\substack{r_1 <\dots< r_k :\\ r_i \leq \tau_N(t_i) \forall i}}
        \Bigg( \prod_{i=1}^k p_{r_i} \Bigg)
        \Bigg( \prod_{r=1}^{\tau_N(t)} 
        (1-p_r) \Bigg). \label{eq:032a}
\end{align}
The second product on the right-hand side does not depend on $r_{ 1 : k }$, and we can use the lower bound from \eqref{eq:028}:
\begin{align}
\prod_{r=1}^{\tau_N(t)} (1-p_r)
&\geq \sum_{l=0}^{\tau_N(t)} (-\alpha_n)^l \ON 
        \frac{1}{l!} t^l \1{E_N^3} - c_N(\tau_N(t)) \exp[ \alpha_n \ON (t+1) ] \notag \\
    &\phantom{\geq} - \Bigg( \sum_{s=1}^{\tau_N(t)} c_N(s)^2 \Bigg)
        \frac{1}{2} \alpha_n^2 (t+1)^2 \exp[ \alpha_n \ON (t+1) ]\notag\\
    &\phantom{\geq} - \Bigg( \sum_{s=1}^{\tau_N(t)} D_N(s) \Bigg)
        \exp[ \alpha_n \ON (t+1) (1+B_n) ], \label{eq:033a}
\end{align}
where $E_N^3$ is defined as in and immediately beneath \eqref{eq:defn_E3}.
We will also need an upper bound on this product, which is formed from \eqref{eq:021} with a further deterministic bound:
\begin{align}
\prod_{r=1}^{\tau_N(t)}  (1-p_r)
\leq & \sum_{l=0}^{\tau_N(t)} (-\alpha_n)^l \ON \frac{1}{l!} t^l \1{E_N^1 \cap E_N^2}
        + c_N(\tau_N(t)) \exp[ \alpha_n \ON (t+1) ] \notag\\
    & + \Bigg( \sum_{s=1}^{\tau_N(t)} c_N(s)^2 \Bigg)
        \frac{1}{2} \alpha_n^2 (t+1)^2 \exp[ \alpha_n \ON (t+1) ] \notag\\
    & + \Bigg( \sum_{s=1}^{\tau_N(t)} D_N(s) \Bigg)
        \exp[ \alpha_n \ON (t+1) (1+B_n^\prime) ] \notag\\
\leq& \exp[ \alpha_n \ON t ]
        + \exp[ \alpha_n \ON (t+1) ]  + \frac{1}{2} \alpha_n^2 (t+1)^3
        \exp[ \alpha_n \ON (t+1) ] \notag\\
        &+ (t+1) \exp[ \alpha_n \ON (t+1) (1+B_n^\prime) ] \notag\\
\leq& \left( 2 + \frac{\alpha_n^2 (t+1)^3}{2} \right) 
        \exp[ \alpha_n \ON (t+1) ] \notag \\
    & + (t+1) \exp[ \alpha_n \ON (t+1) (1+B_n^\prime) ],
        \label{eq:034a}
\end{align}
where the second inequality uses Proposition \ref{thm:cN_properties}, parts \ref{item:cN_property1} and \ref{item:cN_property4}.
Now consider the remaining sum-product of $p_{r_i}$-factors on the right-hand side of \eqref{eq:032a}.
We use the same bound on $p_r$ as in \eqref{eq:018}:
\begin{equation}\label{eq:050a}
p_r
= 1 - p_{\Delta\Delta}(r) 
\geq \alpha_n \ON 
        \left[ c_N(r) - B_n^\prime D_N(r) \right], 
\end{equation}
where the $\ON$ term does not depend on $r$.
The right-hand side of \eqref{eq:050a} is non-negative on the event $E_N^2$, defined in and beneath \eqref{eq:defn_E2}.
Hence
\begin{equation*}
\prod_{i=1}^k p_{r_i}
\geq \alpha_n^k \ON 
        \prod_{i=1}^k \left[ c_N(r_i) - B_n^\prime D_N(r_i) \right] \1{E_N^2} .
\end{equation*}
Applying Lemma~\ref{thm:sumprod3} with ordered indices, we obtain
\begin{align*}
\sum_{\substack{r_1 <\dots< r_k :\\ r_i \leq \tau_N(t_i) \forall i}}
        \prod_{i=1}^k p_{r_i}
&\geq \alpha_n^k \ON
        \sum_{\substack{r_1 <\dots< r_k :\\ r_i \leq \tau_N(t_i) \forall i}}
        \prod_{i=1}^k c_N(r_i) \1{E_N^2} \\
    &\phantom{\geq}- \alpha_n^k \ON \frac{1}{k!} \Bigg( \sum_{s=1}^{\tau_N(t)} D_N(s) \Bigg)
        (t+1)^{k-1} (1+B_n^\prime)^k.
\end{align*}
The above expression is already split into positive and negative terms; a lower bound on \eqref{eq:032a} can be formed by multiplying the positive terms by the lower bound \eqref{eq:033a} and the negative terms by the upper bound \eqref{eq:034a}. 
Thus,
\begin{align*}
&\sum_{\substack{r_1 <\dots< r_k :\\ r_i \leq \tau_N(t_i) \forall i}}
        \Bigg( \prod_{i=1}^k p_{r_i} \Bigg)
        \Bigg( \prod_{\substack{r=1 \\ \notin \{r_1,\dots,r_k\} }}^{\tau_N(t)} 
        (1-p_r) \Bigg) \\
&\geq \alpha_n^k \ON
        \sum_{\substack{r_1 <\dots< r_k :\\ r_i \leq \tau_N(t_i) \forall i}}
        \prod_{i=1}^k c_N(r_i) \1{E_N^2} \Bigg\{
        \sum_{l=0}^{\tau_N(t)} (-\alpha_n)^l \ON 
        \frac{1}{l!} t^l \1{E_N^3} \\
    &\phantom{\geq \times} - \Bigg[ \Bigg( \sum_{s=1}^{\tau_N(t)} c_N(s)^2 \Bigg)
        \frac{\alpha_n^2 (t+1)^2}{2} + c_N(\tau_N(t)) \Bigg] \exp[ \alpha_n \ON (t+1) ] \\
    &\phantom{\geq \times}
    - \Bigg( \sum_{s=1}^{\tau_N(t)} D_N(s) \Bigg)
        \exp[ \alpha_n \ON (t+1) (1+B_n) ] \Bigg\} \\
&\phantom{\geq} - \Bigg( \sum_{s=1}^{\tau_N(t)} D_N(s) \Bigg)
        \alpha_n^k \ON \frac{1}{k!}
        (t+1)^{k-1} (1+B_n^\prime)^k \\
    &\phantom{\geq -} \times \Bigg\{\left( 2 + \frac{\alpha_n^2 (t+1)^3}{2} \right) 
        \exp[ \alpha_n \ON (t+1) ] + (t+1) \exp[ \alpha_n \ON (t+1) (1+B_n^\prime) ] 
        \Bigg\} .
\end{align*}
Due to \eqref{eq:BJJK_eq3.3}--\eqref{eq:BJJK_eq3.4}, all but the first line on the right-hand side of the above have vanishing expectation, leaving
\begin{align}
&\lim_{N\to\infty} \E \Bigg[ 
        \sum_{\substack{r_1 <\dots< r_k :\\ r_i \leq \tau_N(t_i) \forall i}}
        \Bigg( \prod_{i=1}^k p_{r_i} \Bigg)
        \Bigg( \prod_{\substack{r=1 \\ \notin \{r_1,\dots,r_k\} }}^{\tau_N(t)} 
        (1-p_r) \Bigg) \Bigg] \notag\\
&\geq \lim_{N\to\infty} \E \Bigg[ \alpha_n^k \ON
        \sum_{\substack{r_1 <\dots< r_k :\\ r_i \leq \tau_N(t_i) \forall i}}
        \prod_{i=1}^k c_N(r_i) \1{E_N^2}
        \sum_{l=0}^{\tau_N(t)} (-\alpha_n)^l \ON 
        \frac{1}{l!} t^l \1{E_N^3} \Bigg] \notag\\
&= \alpha_n^k
        \sum_{l=0}^{\infty} (-\alpha_n)^l
        \frac{1}{l!} t^l
        \lim_{N\to\infty}\E\Bigg[ \I{\tau_N(t) \geq l} \1{E_N^2\cap E_N^3}
        \sum_{\substack{r_1 <\dots< r_k :\\ r_i \leq \tau_N(t_i) \forall i}}
        \prod_{i=1}^k c_N(r_i) \Bigg] . \label{eq:056}
\end{align}
Passing the limit and expectation inside the infinite sum is justified by Lemma~\ref{thm:DCT_Fubini}.
Lemmata \ref{thm:indicators_cN} and \ref{thm:indicators_DN} establish that $\lim_{N\to\infty}\Prob[ E_N^2 \cap E_N^3 ] =1$, and Lemma~\ref{thm:indicators_tau} deals with the indicator for $\{ \tau_N( t ) \geq l \}$.
We can therefore apply Lemma~\ref{thm:induction_sumprodcN} to conclude that
\begin{align*}
\lim_{N\to\infty} \E \Bigg[ 
        \sum_{\substack{r_1 <\dots< r_k :\\ r_i \leq \tau_N(t_i) \forall i}}
        &\Bigg( \prod_{i=1}^k p_{r_i} \Bigg)
        \Bigg( \prod_{\substack{r=1 \\ \notin \{r_1,\dots,r_k\} }}^{\tau_N(t)} 
        (1-p_r) \Bigg) \Bigg] \\
&\geq \alpha_n^k
        \sum_{l=0}^{\infty} (-\alpha_n)^l
        \frac{1}{l!} t^l
        \sum_{\substack{i_1\leq \dots\leq i_{k-1}\\ \in \{0,\dots,k\} :
        i_j \geq j}}
        \prod_{j=1}^k \frac{(t_j - t_{j-1})^{i_j - i_{j-1}}}{(i_j - i_{j-1})! } \\
&= \alpha_n^k e^{-\alpha_n t} 
        \sum_{\substack{i_1\leq \dots\leq i_{k-1}\\ \in \{0,\dots,k\} :
        i_j \geq j}}
        \prod_{j=1}^k \frac{(t_j - t_{j-1})^{i_j - i_{j-1}}}{(i_j - i_{j-1})! },
\end{align*}
as required.
\end{proof}

\begin{lemma}\label{thm:induction_sumprodcN}
Assume \eqref{eq:mainthmcondition2} holds.
Fix $k \in \mathbb{N}$, $i_0:=0$, $i_k:=k$. 
Let $E_N$ be a sequence of events such that 
$\lim_{N\to\infty} \Prob[E_N] =1$. 
Then for any sequence of times 
$0 = t_0 \leq t_1 \leq \cdots \leq t_k \leq t$,
\begin{equation*}
\lim_{N\to\infty} \E \Bigg[ \1{E_N} 
        \sum_{\substack{r_1<\dots<r_k :\\ r_i\leq \tau_N(t_i) \forall i}}
        \prod_{i=1}^k c_N(r_i) \Bigg] 
= \sum_{\substack{i_1\leq \dots\leq i_{k-1}\\ \in \{0,\dots,k\} :
        i_j \geq j}}
        \prod_{j=1}^k \frac{(t_j - t_{j-1})^{i_j - i_{j-1}}}{(i_j - i_{j-1})! } .
\end{equation*}
\end{lemma}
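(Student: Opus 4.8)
The plan is to decompose the constrained sum according to how the indices $r_1<\dots<r_k$ distribute among the $k$ consecutive time windows $\mathcal{W}_j:=\{\tau_N(t_{j-1})+1,\dots,\tau_N(t_j)\}$, $j\in[k]$ (with $t_0=0$ and $\tau_N(0)=0$), mirroring the neutral computation of \citet{mohle1999} but working in expectation to handle the random $c_N$ and the random time change. Since $r_1<\dots<r_k$ with $r_\ell\leq\tau_N(t_\ell)$ for each $\ell$ and $\tau_N$ is non-decreasing, the counts $i_j:=|\{\ell:r_\ell\leq\tau_N(t_j)\}|$ satisfy $i_0=0$, $i_k=k$, $0\leq i_1\leq\dots\leq i_{k-1}\leq k$ and $i_j\geq j$ (because $r_1,\dots,r_j$ are all $\leq\tau_N(t_j)$); conversely, for any such ``profile'' the indices $i_{j-1}+1,\dots,i_j$ may be placed freely in window $j$ (the constraint $r_\ell\leq\tau_N(t_\ell)$ being automatic, as $\ell>i_{j-1}\geq j-1$ forces $\ell\geq j$ and hence $\tau_N(t_\ell)\geq\tau_N(t_j)$), and choices in distinct windows are independent. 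Writing $m_j:=i_j-i_{j-1}$ (so $\sum_{j=1}^k m_j=k$) and $S_N^{(j)}:=\sum_{\tau_N(t_{j-1})<u_1<\dots<u_{m_j}\leq\tau_N(t_j)}\prod_{l=1}^{m_j}c_N(u_l)$, with the empty-window convention $S_N^{(j)}:=1$ when $m_j=0$, one obtains
\begin{equation*}
\sum_{\substack{r_1<\dots<r_k:\\ r_i\leq\tau_N(t_i)\,\forall i}}\ \prod_{i=1}^k c_N(r_i)\ =\ \sum_{\substack{i_1\leq\dots\leq i_{k-1}\\ \in\{0,\dots,k\}:\ i_j\geq j}}\ \prod_{j=1}^k S_N^{(j)} .
\end{equation*}
If $t_j=t_{j-1}$ while $m_j\geq1$ then $\mathcal{W}_j=\varnothing$, so $S_N^{(j)}=0$ and such a profile contributes $0$ on the left, matching the factor $(t_j-t_{j-1})^{m_j}/m_j!=0$ on the right, so these profiles may be discarded; the remaining admissible profiles form a finite set, indexed by exactly the index set of the target sum.

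Next I would control each window sum. As $m_j$ distinct indices can be ordered, $S_N^{(j)}=\tfrac{1}{m_j!}\sum_{u_1,\dots,u_{m_j}\in\mathcal{W}_j\ \text{distinct}}\prod_l c_N(u_l)$, so Lemma~\ref{thm:sumprod1}\ref{thm:sumprod1_b} (with $s=t_{j-1}$, $t=t_j$, and $t_j+1\leq t+1$; for $j=1$ using the $s=0$ case noted after Lemma~\ref{thm:sumprod1}, where $c_N(\tau_N(0))=0$) gives, for $m_j\geq1$,
\begin{equation*}
\frac{(t_j-t_{j-1})^{m_j}-R_N^{(j)}(t+1)^{m_j}}{m_j!}\,\1{F_N^{(j)}}\ \leq\ S_N^{(j)}\ \leq\ \frac{(t_j-t_{j-1})^{m_j}+c_N(\tau_N(t_j))(t+1)^{m_j}}{m_j!},
\end{equation*}
where $R_N^{(j)}:=c_N(\tau_N(t_{j-1}))+\binom{m_j}{2}\sum_{r=\tau_N(t_{j-1})+1}^{\tau_N(t_j)}c_N(r)^2$ and $F_N^{(j)}:=\{c_N(\tau_N(t_{j-1}))\leq t_j-t_{j-1}\}$. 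By Proposition~\ref{thm:cN_properties}\ref{item:cN_property1},\ref{item:cN_property4} and $c_N(r)\in[0,1]$, each of $S_N^{(j)}$, $c_N(\tau_N(t_j))$ and $R_N^{(j)}$ is bounded by a constant depending only on $t$ and $k$; moreover $\E[c_N(\tau_N(t_j))]\to0$ and $\E[\sum_{r=1}^{\tau_N(t)}c_N(r)^2]\to0$ by \eqref{eq:BJJK_eq3.3} and \eqref{eq:BJJK_eq3.5} (the latter used as in Lemma~\ref{thm:basis}, i.e.\ with the sum starting at $r=1$), so $\E[R_N^{(j)}]\to0$; and $\Prob[(F_N^{(j)})^c]\to0$ by Markov's inequality when $t_j>t_{j-1}$, while for $t_{j-1}=0$ the event $F_N^{(j)}$ holds surely. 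Setting $\epsilon_N^{(j)}:=S_N^{(j)}-(t_j-t_{j-1})^{m_j}/m_j!$ (and $\epsilon_N^{(j)}:=0$ if $m_j=0$), the sandwich yields $|\epsilon_N^{(j)}|\leq\tfrac{(t+1)^{m_j}}{m_j!}\big(\1{(F_N^{(j)})^c}+R_N^{(j)}+c_N(\tau_N(t_j))\big)$, whence $\epsilon_N^{(j)}$ is bounded and $\E|\epsilon_N^{(j)}|\to0$.

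Finally I would multiply and pass to the limit. For a fixed admissible profile, $\prod_{j=1}^k S_N^{(j)}=\prod_{j=1}^k\big((t_j-t_{j-1})^{m_j}/m_j!+\epsilon_N^{(j)}\big)$; expanding, the leading term is the deterministic constant $\prod_{j=1}^k(t_j-t_{j-1})^{m_j}/m_j!$ and every other summand is a product of uniformly bounded factors containing at least one $\epsilon_N^{(j)}$, hence is $L^1$-null, so $\E\big[\prod_j S_N^{(j)}\big]\to\prod_j(t_j-t_{j-1})^{m_j}/m_j!$. Inserting $\1{E_N}$ leaves the limit unchanged, because by Lemma~\ref{thm:sumprod1}\ref{thm:sumprod1_a} (with $s=0$, $l=k$) the full sum is deterministically at most $(t+1)^k/k!$, so its difference from the $\1{E_N}$-weighted version is bounded by $(t+1)^k\Prob[E_N^c]/k!\to0$. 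Summing the per-profile limits over the finitely many admissible profiles produces exactly $\sum_{i_1\leq\dots\leq i_{k-1}\in\{0,\dots,k\},\,i_j\geq j}\prod_{j=1}^k(t_j-t_{j-1})^{i_j-i_{j-1}}/(i_j-i_{j-1})!$, as asserted. I expect the main obstacle to be the bookkeeping in this last step together with the combinatorial set-up in the first: one must match the admissible profiles (in particular the constraints $i_j\geq j$ and the degenerate windows $t_j=t_{j-1}$) with the index set of the target sum, and then, on expanding the product of $k$ window sums, keep careful track of which factors are merely bounded and which are $L^1$-null, while carrying along the auxiliary indicators $\1{F_N^{(j)}}$ (absorbed into $\epsilon_N^{(j)}$) and $\1{E_N}$.
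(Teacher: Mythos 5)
Your proposal is correct and follows essentially the same route as the paper's proof: the same decomposition of the constrained sum into products of per-window sums indexed by the profiles $(i_1,\dots,i_{k-1})$, with each window sum sandwiched via Lemma~\ref{thm:sumprod1}\ref{thm:sumprod1_b} and the errors killed in expectation using \eqref{eq:BJJK_eq3.3} and \eqref{eq:BJJK_eq3.5}. The only difference is presentational: you absorb the sign-controlling indicators into an $L^1$-null error term $\epsilon_N^{(j)}$ and expand the product of $k$ bounded factors, whereas the paper carries explicit events $E_N^4(j)$, $F_N^4(j)$ and binomial expansions over subsets of $[k]$; the substance is identical.
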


\begin{proof}
As pointed out by \citet[p.\ 460]{mohle1999}, the sum-product on the left-hand side of the statement can be expanded as
\begin{equation*}
\sum_{\substack{r_1<\dots<r_k :\\ r_i\leq \tau_N(t_i) \forall i}} 
        \prod_{i=1}^k c_N(r_i)
= \sum_{\substack{i_1\leq \dots\leq i_{k-1}\\ \in \{0,\dots,k\} :
        i_j \geq j}} \, \prod_{j=1}^k \frac{1}{(i_j - i_{j-1})!}
        \sum_{\substack{ r_{i_{j-1}+1}, \ldots, r_{i_j} \\ = \tau_N(t_{j-1})+1 
        \\ \text{all distinct} }}^{\tau_N(t_j)}  
        \,\prod_{i=i_{j-1}+1}^{i_j} c_N(r_i) .
\end{equation*}
By Lemma~\ref{thm:sumprod1}\ref{thm:sumprod1_b},
\begin{align*}
\sum_{\substack{ r_{i_{j-1}+1}, \ldots, r_{i_j} = \tau_N(t_{j-1})+1 
        \\ \text{all distinct} }}^{\tau_N(t_j)}  
        \prod_{i=i_{j-1}+1}^{i_j} c_N(r_i) \leq{}& (t_j - t_{j-1})^{i_j - i_{j-1}} 
        \\& + c_N(\tau_N(t_j)) ( t_j +1 )^{i_j - i_{j-1}}.
\end{align*}
Hence, a $k$-fold product of similar terms can be bounded as
\begin{align*}
&\prod_{j=1}^k \frac{1}{(i_j - i_{j-1})!}
        \sum_{\substack{ r_{i_{j-1}+1}, \ldots, r_{i_j} = \tau_N(t_{j-1})+1 
        \\ \text{all distinct} }}^{\tau_N(t_j)}
        \,\prod_{i=i_{j-1}+1}^{i_j} c_N(r_i) \\
&\leq \prod_{j=1}^k \Bigg\{ \frac{(t_j - t_{j-1})^{i_j - i_{j-1}}}{(i_j - i_{j-1})!} + c_N(\tau_N(t_j)) \frac{( t_j +1 )^{i_j - i_{j-1}}}{(i_j - i_{j-1})!} \Bigg\} 
        \\
&\leq \prod_{j=1}^k \frac{(t_j - t_{j-1})^{i_j - i_{j-1}}}{(i_j - i_{j-1})!}  + \sum_{\mathcal{I} \subset [k]} \left( \prod_{j\in\mathcal{I}} 
        t^{i_j - i_{j-1}}\right)
        \Bigg( \prod_{j\notin\mathcal{I}} c_N(\tau_N(t_j))
        (t+1)^{i_j - i_{j-1}} \Bigg) \\
&\leq  \prod_{j=1}^k \frac{(t_j - t_{j-1})^{i_j - i_{j-1}}}{(i_j - i_{j-1})!}
        + \sum_{\mathcal{I} \subset [k]} c_N(\tau_N(t_{j^\star(\mathcal{I})})) 
        ( t+1 )^k,
\end{align*}
where, say, $j^\star(\mathcal{I}) := \min\{ j\notin\mathcal{I}\}$, and the second-to-last line follows by separating the $I = [k]$ term, and the last via Proposition \ref{thm:cN_properties}\ref{item:cN_property1}, recalling that $t_0 = 0$ and $t = t_k$.
Now we are in a position to evaluate the desired limit: 
\begin{align*}
&\lim_{N\to\infty} \E \Bigg[ \1{E_N} 
        \sum_{\substack{r_1<\dots<r_k :\\ r_i\leq \tau_N(t_i) \forall i}} 
        \prod_{i=1}^k c_N(r_i) \Bigg] 
\leq \lim_{N\to\infty} \E \Bigg[\sum_{\substack{r_1<\dots<r_k :\\ 
        r_i\leq \tau_N(t_i) \forall i}} \prod_{i=1}^k c_N(r_i) \Bigg] \\
&\leq \sum_{\substack{i_1\leq \dots\leq i_{k-1}\\ \in \{0,\dots,k\} :
        i_j \geq j}} \Bigg\{ \prod_{j=1}^k \frac{(t_j - t_{j-1})^{i_j - i_{j-1}}}
        {(i_j - i_{j-1})!} + \sum_{\mathcal{I} \subset [k]} \lim_{N\to\infty} 
        \E [ c_N(\tau_N(t_{j^\star(\mathcal{I})})) ] 
        (t+1)^k \Bigg\} \\
&= \sum_{\substack{i_1\leq \dots\leq i_{k-1}\\ \in \{0,\dots,k\} : 
        i_j \geq j \forall j}} \prod_{j=1}^k \frac{(t_j - t_{j-1})^{i_j - i_{j-1}}}
        {(i_j - i_{j-1})!},
\end{align*}
using \eqref{eq:BJJK_eq3.3}.
For the corresponding lower bound, by Lemma~\ref{thm:sumprod1}\ref{thm:sumprod1_b},
\begin{align*}
&\sum_{\substack{ r_{i_{j-1}+1}, \ldots, r_{i_j} = \tau_N(t_{j-1})+1 
        \\ \text{all distinct} }}^{\tau_N(t_j)}  
        \prod_{i=i_{j-1}+1}^{i_j} c_N(r_i)  \\ 
&\geq \Bigg[(t_j - t_{j-1})^{i_j - i_{j-1}} - \Bigg(c_N(\tau_N(t_{j - 1}))  \\
        &\phantom{\geq \Bigg[} + \binom{i_j - i_{j-1}}{2} \sum_{s=\tau_N(t_{j-1})+1}^{\tau_N(t_j)}
        c_N(s)^2 \Bigg) ( t_j + 1 )^{i_j - i_{j-1}} \Bigg] \1{\{c_N(\tau_N(t_{j - 1})) \leq t_j - t_{j - 1}\}}.
\end{align*}
Define the events
\begin{align*}
E_N^4(j)
&= \Bigg\{ \sum_{s=\tau_N(t_{j-1})+1}^{\tau_N(t_j)} c_N(s)^2
        \leq \frac{1}{2(i_j - i_{j - 1})!} \Bigg( \frac{ t_j - t_{j-1} }{ t_j +1 }\Bigg)^{i_j - i_{j-1}} \Bigg\}, \\
F_N^4(j)
&= \Bigg\{ c_N(\tau_N(t_{j-1}))
        \leq \frac{1}{2(i_j - i_{j - 1})!} \Bigg( \frac{ t_j - t_{j-1} }{ t_j +1 }\Bigg)^{i_j - i_{j-1}} \Bigg\},    
\end{align*}
where the upper bound on the right-hand sides is strictly positive since $t_j > t_{j - 1}$, and thus satisfies the conditions of Lemmata \ref{thm:indicators_cN} and \ref{thm:indicators_c2}.
Define the event 
\begin{equation*}
    E_N^4 := \bigcap_{j=1}^k [E_N^4(j) \cap F_N^4(j) \cap \{c_N(\tau_N(t_{j-1})) \leq t_j - t_{j - 1}\}],
\end{equation*}
on which the factors of the following product are non-negative:
\begin{align*}
&\prod_{j=1}^k \frac{1}{(i_j - i_{j-1})!}
        \sum_{\substack{ r_{i_{j-1}+1}, \ldots, r_{i_j} = \tau_N(t_{j-1})+1 
        \\ \text{all distinct} }}^{\tau_N(t_j)} 
        \,\prod_{i=i_{j-1}+1}^{i_j} c_N(r_i) \\
&\geq \prod_{j=1}^k \Bigg\{ \frac{(t_j - t_{j-1})^{i_j - i_{j-1}}}{(i_j - i_{j-1})!}  - \Bigg( c_N(\tau_N(t_{j-1})) + \sum_{s=\tau_N(t_{j-1})+1}^{\tau_N(t_j)} c_N(s)^2 \Bigg)
        ( t_j +1 )^{i_j - i_{j-1}} \Bigg\} \1{E_N^4} \\
&= \sum_{\mathcal{I} \subseteq [k]} (-1)^{k-|\mathcal{I}|} 
        \Bigg( \prod_{j\in\mathcal{I}} \frac{(t_j - t_{j-1})^{i_j - i_{j-1}}}
        {(i_j - i_{j-1})!} \Bigg) \\
    &\phantom{\geq \prod_{j=1}^k \Bigg\{} \times \Bigg( \prod_{j\notin\mathcal{I}} \Bigg( c_N(\tau_N(t_{j - 1})) +
        \sum_{s=\tau_N(t_{j-1})+1}
        ^{\tau_N(t_j)} c_N(s)^2 \Bigg) ( t_j +1 )^{i_j - i_{j-1}} \Bigg) 
        \1{E_N^4} \\
&\geq \prod_{j=1}^k \frac{(t_j - t_{j-1})^{i_j - i_{j-1}}}{(i_j - i_{j-1})!} 
        \1{E_N^4} \notag\\
    &\phantom{=}- \sum_{\mathcal{I} \subset [k]} \Bigg( \prod_{j\in\mathcal{I}} 
        t^{i_j - i_{j-1}} \Bigg) 
        \Bigg( \prod_{j\notin\mathcal{I}} \Bigg( c_N(\tau_N(t_{j - 1} )) + \sum_{s=\tau_N(t_{j-1})+1}
        ^{\tau_N(t_j)} c_N(s)^2 \Bigg) ( t+1 )^{i_j - i_{j-1}} \Bigg),
\end{align*}
where the last expression separates out the $\mathcal{I} = [k]$ term, ensures all other terms have a negative sign, and bounds their magnitude from above.
Using parts \ref{item:cN_property1} and \ref{item:cN_property4} of Proposition~\ref{thm:cN_properties} to upper bound all but one of the $c_N(\tau_N(t_{j-1})) + \sum c_N(s)^2 \leq t + 2$ factors, arbitrarily setting $j^\star(\mathcal{I}) := \min\{j \notin \mathcal{I} \}$, as well as recalling that $\sum_{ j = 1 }^k (i_j - i_{j - 1}) = k$ and $|\mathcal{I}^c| \leq k$, this is further bounded by
\begin{align*}
&\prod_{j=1}^k \frac{1}{(i_j - i_{j-1})!}
        \sum_{\substack{ r_{i_{j-1}+1}, \ldots, r_{i_j} = \tau_N(t_{j-1})+1 
        \\ \text{all distinct} }}^{\tau_N(t_j)} 
        \,\prod_{i=i_{j-1}+1}^{i_j} c_N(r_i) \notag\\
&\geq \prod_{j=1}^k \frac{(t_j - t_{j-1})^{i_j - i_{j-1}}}{(i_j - i_{j-1})!} 
        \1{E_N^4} - \sum_{\mathcal{I} \subset [k]} 
        \Bigg( c_N(\tau_N(t_{j^*(I)-1})) + \sum_{s=\tau_N(t_{j^\star(\mathcal{I})-1})+1}
        ^{\tau_N(t_{j^\star(\mathcal{I})})} c_N(s)^2 \Bigg)
        ( t+2 )^{2 k} .
\end{align*}
We can now evaluate the limit:
\begin{align*}
&\lim_{N\to\infty} \E \Bigg[ \1{E_N} \sum_{\substack{r_1<\dots<r_k :\\ 
        r_i\leq \tau_N(t_i) \forall i}} \prod_{i=1}^k c_N(r_i) \Bigg] \\
&\geq \sum_{\substack{i_1\leq \dots\leq i_{k-1}\\ \in \{0,\dots,k\} 
        : i_j \geq j}}
        \prod_{j=1}^k \frac{(t_j - t_{j-1})^{i_j - i_{j-1}}}{(i_j - i_{j-1})!}   
        \lim_{N\to\infty} \Prob\left[ E_N \cap E_N^4 \right] \notag\\
    &\phantom{\geq} - \sum_{\substack{i_1\leq \dots\leq i_{k-1}\\ \in \{0,\dots,k\} 
        : i_j \geq j}}
        \sum_{\mathcal{I} \subset [k]}
        \lim_{N\to\infty} \E \Bigg[ c_N(\tau_N(t_{j^*(I)-1})) + \sum_{s=\tau_N(t_{j^\star(\mathcal{I}) -1})+1}^{\tau_N(t_{j^\star(\mathcal{I}) })}         
        c_N(s)^2 \Bigg] ( t+2 )^{2k} \notag\\
&= \sum_{\substack{i_1\leq \dots\leq i_{k-1}\\ \in \{0,\dots,k\} 
        : i_j \geq j}}
        \prod_{j=1}^k \frac{(t_j - t_{j-1})^{i_j - i_{j-1}}}{(i_j - i_{j-1})!},
\end{align*}
where we used \eqref{eq:BJJK_eq3.3} and \eqref{eq:BJJK_eq3.5} to conclude that the sum in the expectation vanishes, and Lemmata \ref{thm:lim_AandB}, \ref{thm:indicators_cN}, and \ref{thm:indicators_c2} to obtain that $\lim_{N\to\infty}\Prob[E_N \cap E_N^4] =1$.
The upper and lower bounds coincide, so the proof is complete.
\end{proof}

\subsection{Indicators}
Many of the preceding results make use of indicator functions in order to control the sign of certain terms.
The probabilities of the corresponding events were claimed to converge to $1$ as $N\to\infty$, so that the indicators do not affect the limit.
These claims are proved in here.
Firstly, Lemma~\ref{thm:lim_AandB} was proved in \cite{brown2021thesis}, and shows that suffices to prove the limits separately for each factor in a product of indicators of two or more events.

\begin{lemma}{\cite[Lemma 4.11]{brown2021thesis}}\label{thm:lim_AandB}
Let $(A_N) , (B_N)$ be sequences of events. 
If $\lim_{N\to\infty}\Prob[A_N] = \lim_{N\to\infty}\Prob[B_N]=1$ then $\lim_{N\to\infty} \Prob [A_N \cap B_N] =1$.
\end{lemma}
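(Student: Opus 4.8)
The plan is to control the probability of the complement of $A_N \cap B_N$ by the union bound. First I would apply De Morgan's law to write $(A_N \cap B_N)^c = A_N^c \cup B_N^c$, and then finite subadditivity of the probability measure gives $\Prob[(A_N \cap B_N)^c] \leq \Prob[A_N^c] + \Prob[B_N^c]$. By hypothesis $\Prob[A_N] \to 1$ and $\Prob[B_N] \to 1$, so $\Prob[A_N^c] = 1 - \Prob[A_N] \to 0$ and likewise $\Prob[B_N^c] \to 0$ as $N \to \infty$; hence the right-hand side vanishes in the limit. Therefore $\Prob[(A_N \cap B_N)^c] \to 0$, which is exactly the statement that $\Prob[A_N \cap B_N] \to 1$.

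An equivalent route, which avoids passing to complements, is to use $\Prob[A_N \cup B_N] \leq 1$ together with inclusion--exclusion, $\Prob[A_N \cap B_N] = \Prob[A_N] + \Prob[B_N] - \Prob[A_N \cup B_N] \geq \Prob[A_N] + \Prob[B_N] - 1$. The right-hand side converges to $1 + 1 - 1 = 1$, while $\Prob[A_N \cap B_N] \leq 1$ trivially, so a squeeze argument closes the proof. Either presentation is a couple of lines; I would pick the complement-and-union-bound version as the cleanest to write.

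There is no genuine obstacle here: the result is an elementary consequence of finite subadditivity. The only thing worth noting is that several places in the paper (for instance the events $E_N^1 \cap E_N^2$, $E_N^2 \cap E_N^3$, and the larger intersection $E_N^4$ in the proof of Lemma~\ref{thm:induction_sumprodcN}) require the analogous statement for three or more events; this follows immediately by iterating the two-event case, e.g.\ $\Prob[A_N \cap B_N \cap C_N] \to 1$ by first applying the lemma to $A_N$ and $B_N$ and then to $A_N \cap B_N$ and $C_N$, and inductively for any fixed finite number of events.
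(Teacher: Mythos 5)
Your proof is correct; the paper itself defers the proof to \citet[Lemma 4.11]{brown2021thesis}, and the argument there is the same elementary union-bound-on-complements computation you give. Your closing remark about iterating the lemma for finitely many events is also exactly how the paper uses it (e.g.\ for $E_N^1 \cap E_N^2$ and the intersections defining $E_N^4$), so nothing is missing.
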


The remainder of this section is split into four lemmata, each showing that the probabilities of certain events converge to $1$ as $N\to\infty$.
The first three are variants of \citet[Supplement, Lemma 4]{koskela2022erratum}, with analogous proofs.
For completeness, self-contained proofs of Lemmata \ref{thm:indicators_cN} -- \ref{thm:indicators_DN} can be found in \citet[Lemmata 4.12--4.14]{brown2021thesis}.

\begin{lemma}\label{thm:indicators_cN}
Assume \eqref{eq:BJJK_eq3.5} holds. Fix $t>0$.
Let $K_N>0$ be a sequence independent of $r$ and bounded away from 0.
Define $E_N(r) := \{ c_N(r) < K_N \}$ and $E_N := \bigcap_{r=1}^{\tau_N(t)} E_N(r)$.
Then $\lim_{N\to\infty} \Prob[E_N]=1$.
\end{lemma}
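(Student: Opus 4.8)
The plan is to dominate the complementary event $E_N^c$ by a single scalar event whose probability is controlled via Markov's inequality and \eqref{eq:BJJK_eq3.5}. First I would unwind the definitions: $E_N^c=\{\,\exists\, r\in\{1,\dots,\tau_N(t)\}:\ c_N(r)\ge K_N\,\}$, which is a genuine event because $\tau_N(t)$ is a.s.\ finite and $E_N^c=\bigcup_{r\ge 1}\big(\{\tau_N(t)\ge r\}\cap\{c_N(r)\ge K_N\}\big)$. Put $\kappa:=\inf_N K_N>0$, positive since $(K_N)$ is bounded away from $0$. On $E_N^c$ there is an index $r^\star\le\tau_N(t)$ with $c_N(r^\star)\ge K_N\ge\kappa$; since every summand of $\sum_{r=1}^{\tau_N(t)}c_N(r)^2$ is non-negative, this forces $\sum_{r=1}^{\tau_N(t)}c_N(r)^2\ge c_N(r^\star)^2\ge\kappa^2$. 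Hence $E_N^c\subseteq\big\{\sum_{r=1}^{\tau_N(t)}c_N(r)^2\ge\kappa^2\big\}$.

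Next I would note that $\sum_{r=1}^{\tau_N(t)}c_N(r)^2\le\sum_{r=1}^{\tau_N(t)}c_N(r)\le t+1$ by Proposition~\ref{thm:cN_properties}, parts \ref{item:cN_property1} and \ref{item:cN_property4} (with $s'=0$), so this random variable is bounded and integrable, and Markov's inequality gives $\Prob[E_N^c]\le\kappa^{-2}\,\E\big[\sum_{r=1}^{\tau_N(t)}c_N(r)^2\big]$. The right-hand side tends to $0$ by \eqref{eq:BJJK_eq3.5} taken with $s=0$ — equivalently, in the form $\E\big[\sum_{r=1}^{\tau_N(t)}c_N(r)^2\big]\to 0$ already used in the proof of Lemma~\ref{thm:basis}, legitimate because $\tau_N(0)=0$. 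Therefore $\Prob[E_N^c]\to 0$, i.e.\ $\Prob[E_N]\to 1$, which is the claim.

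I do not expect a genuine obstacle in this lemma: the analytic content sits entirely in \eqref{eq:BJJK_eq3.5}, and the only points requiring a line of care are the measurability of the intersection over the random range $1,\dots,\tau_N(t)$ and the (harmless) passage to the $s=0$ / ``sum from $r=1$'' version of \eqref{eq:BJJK_eq3.5}.
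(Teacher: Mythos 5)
Your argument is correct and is essentially the route the paper intends: the paper defers the detailed proof to \citet[Lemma 4.12]{brown2021thesis} (``analogous to'' the cited erratum lemma), but its explicitly written sibling, the proof of Lemma~\ref{thm:indicators_c2}, uses exactly the same Markov-inequality-on-$\sum_r c_N(r)^2$ device combined with \eqref{eq:BJJK_eq3.5}. Your containment $E_N^c \subseteq \{\sum_{r=1}^{\tau_N(t)} c_N(r)^2 \geq \kappa^2\}$ with $\kappa = \inf_N K_N > 0$, together with the harmless $s=0$ reading of \eqref{eq:BJJK_eq3.5} (valid since $\tau_N(0)=0$, and used in this form throughout the proof of Lemma~\ref{thm:basis}), settles the claim.
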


\begin{lemma}\label{thm:indicators_tau}
Assume \eqref{eq:BJJK_eq3.5} holds.
Fix $t > 0$.
For any $l \in \mathbb{N}$, $\Prob[ \tau_N(t) \geq l ] \to 1$ as $N \to \infty$.
\end{lemma}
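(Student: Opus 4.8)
The plan is to control the complementary probability $\Prob[\tau_N(t)\le l-1]$ and show it vanishes as $N\to\infty$. The case $l=1$ is immediate: since $t>0$ and $\sum_{r=1}^{0}c_N(r)=0<t$, the definition \eqref{eq:defn_tauN} forces $\tau_N(t)\ge 1$ deterministically, so $\Prob[\tau_N(t)\ge 1]=1$. For $l\ge 2$, I would start from the observation that, because $s\mapsto\sum_{r=1}^{s}c_N(r)$ is nondecreasing, the event $\{\tau_N(t)\le l-1\}$ coincides with $\bigl\{\sum_{r=1}^{l-1}c_N(r)\ge t\bigr\}$, which involves only a fixed finite number, $l-1$, of the $c_N(r)$.

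The key step is to turn this into a statement about $\sum_r c_N(r)^2$, the quantity controlled by \eqref{eq:BJJK_eq3.5}. By the Cauchy--Schwarz inequality applied to the $l-1$ summands, on the event $\{\tau_N(t)\le l-1\}$ one has
\[
\sum_{r=1}^{l-1}c_N(r)^2\ \ge\ \frac{1}{l-1}\Bigl(\sum_{r=1}^{l-1}c_N(r)\Bigr)^{2}\ \ge\ \frac{t^{2}}{l-1},
\]
so that $\{\tau_N(t)\le l-1\}\subseteq\bigl\{\sum_{r=1}^{l-1}c_N(r)^2\ge t^{2}/(l-1)\bigr\}$. Since $\tau_N(l-1)\ge l-1$ by Proposition~\ref{thm:cN_properties}\ref{item:cN_property6}, one may enlarge the sum to $\sum_{r=1}^{\tau_N(l-1)}c_N(r)^2$, and Markov's inequality then gives
\[
\Prob[\tau_N(t)\le l-1]\ \le\ \frac{l-1}{t^{2}}\,\E\Bigl[\sum_{r=1}^{\tau_N(l-1)}c_N(r)^2\Bigr],
\]
which tends to $0$ by \eqref{eq:BJJK_eq3.5} (read with the convention $\tau_N(0)=0$, as elsewhere in the paper). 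Taking complements yields $\Prob[\tau_N(t)\ge l]\to 1$.

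I do not expect a substantive obstacle: the argument is short, and the only point needing a little care is the passage from ``$\tau_N(t)$ small'' to ``$\sum_r c_N(r)^2$ not small'', handled by Cauchy--Schwarz on the $l-1$ summands present on $\{\tau_N(t)\le l-1\}$ (here $l\ge 2$ ensures $l-1\ge 1$, so there is no division by zero). Integrability is automatic, since $\sum_{r=1}^{\tau_N(l-1)}c_N(r)^2\le\sum_{r=1}^{\tau_N(l-1)}c_N(r)\le l$ by Proposition~\ref{thm:cN_properties}, parts \ref{item:cN_property1} and \ref{item:cN_property4}.
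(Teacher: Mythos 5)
Your argument is correct and is essentially the approach of the proof the paper defers to (\citet[Lemma 4.13]{brown2021thesis}, after \citet[Supplement, Lemma 4]{koskela2022erratum}): identify $\{\tau_N(t)\le l-1\}$ with $\{\sum_{r=1}^{l-1}c_N(r)\ge t\}$, deduce a deterministic lower bound on $\sum_r c_N(r)^2$ on that event, and conclude via Markov's inequality and \eqref{eq:BJJK_eq3.5}. Your use of Cauchy--Schwarz in place of the usual pigeonhole step (some $c_N(r)\ge t/(l-1)$) is an inessential variation, and the enlargement of the sum via Proposition~\ref{thm:cN_properties}\ref{item:cN_property6} together with the $s=0$ reading of \eqref{eq:BJJK_eq3.5} is consistent with how that limit is used elsewhere in the paper.
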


\begin{lemma}\label{thm:indicators_DN}
Assume \eqref{eq:BJJK_eq3.4} holds.
Fix $t>0$.
Let $K$ be a constant not depending on $N$ or $r$.
Then
\begin{equation*}
\lim_{N\to\infty} \Prob \Bigg[ \bigcap_{r=1}^{\tau_N(t)} 
        \left\{ c_N(r) \geq K D_N(r) \right\} \Bigg] 
= 1 . 
\end{equation*}
\end{lemma}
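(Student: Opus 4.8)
The plan is to control the complementary event $\bigcup_{r=1}^{\tau_N(t)}\{ c_N(r) < K D_N(r) \}$ and show its probability vanishes. First one disposes of the trivial range $K \le 1$: by Proposition~\ref{thm:cN_properties}\ref{item:cN_property1} one then has $K D_N(r) \le D_N(r) \le c_N(r)$ for every $r$, so the event in the statement has probability one. Assuming $K > 1$, a union bound gives
\[
\Prob\Bigg[ \bigcup_{r=1}^{\tau_N(t)} \{ c_N(r) < K D_N(r) \} \Bigg]
\le \E\Bigg[ \sum_{r=1}^{\tau_N(t)} \I{ c_N(r) < K D_N(r) } \Bigg],
\]
so it would suffice to dominate the indicator pointwise by a fixed constant multiple of $D_N(r)$ and then invoke \eqref{eq:BJJK_eq3.4} (in the $s = 0$ boundary case, with $\tau_N(0) = 0$, as already used in the proof of Lemma~\ref{thm:basis}).

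The key step is to observe that the event $\{ c_N(r) < K D_N(r) \}$ can occur only when generation $r$ contains a macroscopic family. Writing $M_r := \max_{i\in[N]} \nu_r^{(i)}$, one has $\nu_r^{(i)} \le M_r$ and $\frac{1}{N}\sum_{j\ne i}(\nu_r^{(j)})^2 \le \frac{1}{N} M_r \sum_j \nu_r^{(j)} = M_r$ for every $i$, so the bracketed factor in \eqref{eq:defn_DN} never exceeds $2M_r$; comparing \eqref{eq:defn_DN} and \eqref{eq:defn_cN} term by term yields the deterministic inequality $D_N(r) \le \frac{2 M_r}{N}\, c_N(r)$. On $\{ c_N(r) < K D_N(r) \}$ we have $c_N(r) > 0$ (since $0 \le D_N(r) \le c_N(r)$), so dividing gives $1 < \frac{2K M_r}{N}$, that is $M_r > N/(2K)$, and hence $c_N(r) \ge (M_r)_2/(N)_2 \ge M_r(M_r-1)/N^2$ exceeds a deterministic quantity converging to $1/(4K^2) > 0$; consequently $D_N(r) > c_N(r)/K$ also exceeds a deterministic positive quantity. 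Thus for some fixed $\gamma > 0$ and all large enough $N$ one has $\I{ c_N(r) < K D_N(r) } \le \I{ D_N(r) \ge \gamma } \le \gamma^{-1} D_N(r)$ for every $r$, whence
\[
\Prob\Bigg[ \bigcup_{r=1}^{\tau_N(t)} \{ c_N(r) < K D_N(r) \} \Bigg]
\le \gamma^{-1}\, \E\Bigg[ \sum_{r=1}^{\tau_N(t)} D_N(r) \Bigg] \longrightarrow 0
\]
by \eqref{eq:BJJK_eq3.4}, which is the claim.

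The main obstacle is the algebraic step in the second paragraph. A direct Markov bound on the indicator is useless, because although $D_N(r)$ is strictly positive on $\{ c_N(r) < K D_N(r) \}$, a priori it could be as small as order $N^{-2}$, while $\E[\sum_r D_N(r)]$ is only known to be $o(1)$, not $o(N^{-2})$. One therefore has to exploit the specific form of \eqref{eq:defn_DN} to see that this event entails a family of size of order $N$, which pins both $c_N(r)$ and $D_N(r)$ away from zero uniformly; once that is in hand, the remainder is a routine union bound plus an appeal to \eqref{eq:BJJK_eq3.4}, in the same spirit as \citet[Supplement, Lemma~4]{koskela2022erratum}.
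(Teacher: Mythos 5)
Your proof is correct. The paper itself does not prove Lemma~\ref{thm:indicators_DN}, deferring to \citet[Supplement, Lemma 4]{koskela2022erratum} and \citet[Lemma 4.14]{brown2021thesis}; your argument --- deducing from the pointwise bound $D_N(r) \le 2 M_r c_N(r)/N$ that the bad event forces a family of size of order $N$, hence pins $D_N(r)$ above a deterministic constant, followed by a union bound, Markov's inequality and \eqref{eq:BJJK_eq3.4} --- is precisely the argument of those references, including the correct handling of the trivial case $K \le 1$ and the $s=0$ instance of \eqref{eq:BJJK_eq3.4}.
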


\begin{lemma}\label{thm:indicators_c2}
Assume \eqref{eq:BJJK_eq3.5} holds.
Fix $k\in\mathbb{N}$, a sequence of times $0 = t_0 \leq t_1 \leq \cdots \leq t_k \leq t$, and let $K_{ 1 : k }$ be strictly positive constants.
Define the event
\begin{equation*}
E_N := \bigcap_{j=1}^k \Bigg\{ \sum_{s=\tau_N(t_{j-1})+1}^{\tau_N(t_j)}
        c_N(s)^2 \leq K_j \Bigg\} .
\end{equation*}
Then $\lim_{N\to\infty} \Prob[E_N] =1$.
\end{lemma}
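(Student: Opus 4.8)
The plan is to reduce the claim to a statement about each of the $k$ constituent events separately, and then to dispatch each one with Markov's inequality and \eqref{eq:BJJK_eq3.5}. Since $k$ is fixed and finite, by Lemma~\ref{thm:lim_AandB} (applied $k-1$ times; equivalently, by a union bound on $E_N^c$) it suffices to show that, for each fixed $j\in[k]$,
\begin{equation*}
\lim_{N\to\infty}\Prob\Bigg[\sum_{s=\tau_N(t_{j-1})+1}^{\tau_N(t_j)} c_N(s)^2 \leq K_j\Bigg] = 1.
\end{equation*}

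Fix such a $j$ and write $X_N^{(j)} := \sum_{s=\tau_N(t_{j-1})+1}^{\tau_N(t_j)} c_N(s)^2 \geq 0$. If $t_{j-1}=t_j$ then $\tau_N(t_{j-1})=\tau_N(t_j)$, the defining sum of $X_N^{(j)}$ is empty, and the event $\{X_N^{(j)}\leq K_j\}$ holds surely. Otherwise $t_{j-1}<t_j$, and I claim $\E[X_N^{(j)}]\to 0$ as $N\to\infty$: when $t_{j-1}>0$ this is exactly \eqref{eq:BJJK_eq3.5} with the pair $(t_{j-1},t_j)$, and when $t_{j-1}=0$ (so $j=1$) it is the $t_0=0$ instance of \eqref{eq:BJJK_eq3.5}, which reads $\E[\sum_{s=1}^{\tau_N(t_1)} c_N(s)^2]\to 0$ because $\tau_N(0)=0$ (recall the remark following Lemma~\ref{thm:sumprod1}); this is the same instance already used in the proof of Lemma~\ref{thm:basis}.

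Given $\E[X_N^{(j)}]\to 0$ and $K_j>0$, Markov's inequality yields $\Prob[X_N^{(j)}>K_j]\leq \E[X_N^{(j)}]/K_j\to 0$, hence $\Prob[X_N^{(j)}\leq K_j]\to 1$. Combining over $j\in[k]$ via Lemma~\ref{thm:lim_AandB} gives $\Prob[E_N]\to 1$. There is no real obstacle here: the argument is a one-line Markov estimate, and the only points needing a word of care are the degenerate case $t_{j-1}=t_j$ (where the sum is empty) and the boundary case $t_0=0$, where one invokes $\tau_N(0)=0$ to put \eqref{eq:BJJK_eq3.5} into the required form.
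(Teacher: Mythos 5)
Your proposal is correct and is essentially the paper's own proof: the paper likewise bounds $\Prob[E_N^c]$ by a union bound over $j\in[k]$, applies Markov's inequality to each term, and invokes \eqref{eq:BJJK_eq3.5} to send each expectation to zero. Your extra remarks on the degenerate case $t_{j-1}=t_j$ and the boundary case $t_0=0$ are careful touches the paper leaves implicit, but they do not change the argument.
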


\begin{proof}
\begin{align*}
\Prob[E_N]
&= 1- \Prob[E_N^c]
= 1- \Prob\Bigg[ \bigcup_{j=1}^k \Bigg\{ \sum_{s=\tau_N(t_{j-1})+1}
        ^{\tau_N(t_j)} c_N(s)^2 > K_j \Bigg\} \Bigg] \\
&\geq 1- \sum_{j=1}^k \Prob\Bigg[ \sum_{s=\tau_N(t_{j-1})+1}
        ^{\tau_N(t_j)} c_N(s)^2 \geq K_j \Bigg] .
\end{align*}
Applying Markov's inequality and using \eqref{eq:BJJK_eq3.5} gives
\begin{equation*}
\Prob[E_N]
\geq 1- \sum_{j=1}^k K_j^{-1} \E\Bigg[ \sum_{s=\tau_N(t_{j-1})+1}
        ^{\tau_N(t_j)} c_N(s)^2 \Bigg]
\underset{N\to\infty}{\longrightarrow} 1- \sum_{j=1}^k K_j^{-1} \times 0 
= 1,
\end{equation*}
as required.
\end{proof}

\subsection{Fubini \& dominated convergence conditions}

There are a few instances where Fubini's Theorem and the Dominated Convergence Theorem are used to pass a limit and an expectation through an infinite sum.
This result, whose proof is a simple adaptation of the argument on \citet[Supplement, p.\ 33--34]{koskela2022erratum}, verifies the conditions of those theorems.
It is used in \eqref{eq:022}, in \eqref{eq:029}, in Lemma~\ref{thm:inductionUB} at \eqref{eq:040}, and in Lemma~\ref{thm:inductionLB} at \eqref{eq:056}.
\begin{lemma}\label{thm:DCT_Fubini}
For any fixed $t>0$, for $N$ sufficiently large,
\begin{equation*}
\E \Bigg[ \sum_{l=0}^\infty \Big| (-\alpha_n)^l \ON \frac{1}{l!} t^l 
        \sum_{\substack{r_1 < \cdots < r_k : \\ r_i \leq \tau_N(t_i) \forall i}}
        \prod_{i=1}^k c_N(r_i) \Big| \Bigg]
< \infty .
\end{equation*}
\end{lemma}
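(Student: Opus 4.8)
The plan is to exploit the fact that almost everything inside the absolute value is already nonnegative, so that removing the absolute value costs nothing, after which the sum over $l$ collapses to a convergent exponential series and the remaining sum-product over $r_{1:k}$ admits a purely deterministic bound.

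First I would note that, under the standing convention that $N$ is large enough for the $\ON$ factor to be positive, and since $c_N(r) \ge 0$ for all $r$ by Proposition~\ref{thm:cN_properties}\ref{item:cN_property1}, the only negative factor inside the absolute value is $(-\alpha_n)^l$. Hence
\begin{equation*}
\Big| (-\alpha_n)^l \ON \frac{1}{l!} t^l \sum_{\substack{r_1 < \cdots < r_k : \\ r_i \le \tau_N(t_i) \forall i}} \prod_{i=1}^k c_N(r_i) \Big| = \alpha_n^l \ON \frac{t^l}{l!} \sum_{\substack{r_1 < \cdots < r_k : \\ r_i \le \tau_N(t_i) \forall i}} \prod_{i=1}^k c_N(r_i).
\end{equation*}
The inner sum-product does not depend on $l$, so it can be pulled outside the sum over $l$, and $\sum_{l=0}^\infty \alpha_n^l \ON t^l / l! = \ON e^{\alpha_n t}$.

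Next I would bound the sum-product deterministically. Since $t_i \le t$ implies $\tau_N(t_i) \le \tau_N(t)$ for every $i$ (because $\tau_N$ is nondecreasing), enlarging each index range to $\{1, \dots, \tau_N(t)\}$ and replacing the ordered sum by a sum over distinct indices at the price of a factor $1/k!$ gives
\begin{equation*}
\sum_{\substack{r_1 < \cdots < r_k : \\ r_i \le \tau_N(t_i) \forall i}} \prod_{i=1}^k c_N(r_i) \le \frac{1}{k!} \sum_{\substack{r_1, \dots, r_k = 1 \\ \text{all distinct}}}^{\tau_N(t)} \prod_{i=1}^k c_N(r_i) \le \frac{(t+1)^k}{k!},
\end{equation*}
where the last step is Lemma~\ref{thm:sumprod1}\ref{thm:sumprod1_a} applied with $s = 0$ (recalling $\tau_N(0) = 0$, so $c_N(\tau_N(0)) = 0$). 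Combining the two displays, the quantity inside the outer expectation is bounded above by the deterministic constant $\ON e^{\alpha_n t}(t+1)^k / k!$, and its expectation is therefore finite.

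There is essentially no obstacle here: the only points needing care are that $\ON > 0$ for large $N$, so that $|\ON| = \ON$, and that the bound obtained is deterministic, which is precisely what is required to justify the interchanges of expectation and infinite sum (via Tonelli, then dominated convergence) at the places where this lemma is invoked.
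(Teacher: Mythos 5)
Your proof is correct and is essentially the argument the paper intends: the paper only cites an external supplement for this lemma, but your deterministic bound — nonnegativity of the summand apart from $(-\alpha_n)^l$, the $e^{\alpha_n t}$ series, and the $\tfrac{1}{k!}(t+1)^k$ control of the sum-product via Lemma~\ref{thm:sumprod1}\ref{thm:sumprod1_a} — is exactly the technique the paper itself uses elsewhere (cf.\ the bound at \eqref{eq:037}). No gaps.
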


\section*{Acknowledgements}

This work was supported by the Engineering and Physical Sciences Research Council, the Medical Research Council, the Alan Turing Institute, and the Alan Turing Institute--Lloyd’s Register Foundation Programme on Data-centric Engineering, under grant numbers EP/L016710/1, EP/N510129/1, EP/R034710/1, EP/R044732/1, EP/T004134/1 and EP/V049208/1.
Data sharing not applicable to this article as no datasets were generated or analysed during the current study.

\bibliographystyle{elsarticle-harv} 
\bibliography{smc.bib}
\end{document}